\newtheorem*{acknowledgments}{Acknowledgments}
\newtheorem{theorem}{Theorem}
\newtheorem{corollary}[theorem]{Corollary}
\newtheorem{definition}[theorem]{Definition}
\newtheorem{lemma}[theorem]{Lemma}
\newtheorem{proposition}[theorem]{Proposition}
\newtheorem{remark}[theorem]{Remark}
\numberwithin{equation}{section}
\numberwithin{theorem}{section}
\newcommand{\sign}{\operatorname{sign}}
\newcommand{\obf}{\operatorname{bf}}
\newcommand{\ocf}{\operatorname{cf}}
\newcommand{\otf}{\operatorname{tf}}
\newcommand{\bhs}[1]{\mathfrak B_{#1}}
\renewcommand{\tilde}{\widetilde}
\renewcommand{\bar}{\overline}
\renewcommand{\Re}{\operatorname{Re}}
\renewcommand{\hat}[1]{\widehat{#1}}
\newcommand{\wt}[1]{\widetilde{#1}}
\newcommand{\rest}[1]{\big\rvert_{#1}} % restriction e.g. to boundary
\newcommand\pa{\partial}
\newcommand{\db}{\bar{\partial}}
\newcommand{\SL}{\operatorname{SL}}
\newcommand\cf{cf\@. }
\newcommand\eps\varepsilon
\newcommand\p\phi
\renewcommand\P\Phi
\renewcommand\det{\operatorname{det}}
\newcommand{\Cl}{\mathbb{C}l}
\newcommand\fD{\operatorname{\phi-hc}}
\newcommand\hc{\operatorname{hc}}
\newcommand\CI{{\mathcal{C}}^{\infty}}
\newcommand{\lrpar}[1]{\left( #1 \right)}
\newcommand{\lrspar}[1]{\left[ #1 \right]}
\newcommand{\norm}[1]{\lVert #1 \rVert}
\newcommand\Ch{\operatorname{Ch}}
\newcommand\coker{\operatorname{coker}}
\newcommand\Det{\operatorname{Det}}
\newcommand\diag{\operatorname{diag}}
\newcommand\End{\operatorname{End}}
\DeclareMathOperator*{\FP}{\operatorname{FP}}
\newcommand\Id{\operatorname{Id}}
\newcommand\RTr[1]{{}^R\operatorname{Tr}\left( #1 \right)}
\newcommand\Str{\operatorname{Str}}
\newcommand\Td{\operatorname{Td}}
\newcommand\Tr{\operatorname{Tr}}
\newcommand\STr{\operatorname{STr}}
\newcommand\tr{\operatorname{tr}}
\newcommand\PU{\operatorname{PU}}
\newcommand\bbA{\mathbb{A}}
\newcommand\bbB{\mathbb{B}}
\newcommand\bbC{\mathbb{C}}
\newcommand\bbH{\mathbb{H}}
\newcommand\bbN{\mathbb{N}}
\newcommand\bbR{\mathbb{R}}
\newcommand\bbZ{\mathbb{Z}}
\newcommand\cE{\mathcal{E}}
\newcommand\cH{\mathcal{H}}
\newcommand\cK{\mathcal{K}}
\newcommand\cN{\mathcal{N}}
\newcommand\cP{\mathcal{P}}
\newcommand\cR{\mathcal{R}}
\newcommand\cS{\mathcal{S}}
\newcommand\cT{\mathcal{T}}
\newcommand\cU{\mathcal{U}}
\newcommand\cV{\mathcal{V}}
\DeclareMathAlphabet{\mathpzc}{OT1}{pzc}{m}{it}
\renewcommand{\Im}{\operatorname{Im}}
\renewcommand{\Re}{\operatorname{Re}}
\newcommand\cSigma{\overline{\Sigma}}
\newcommand\tSigma{\widetilde{\Sigma}}
\newcommand\pardeg{\operatorname{par}\operatorname{deg}}
\newcommand\rank{\operatorname{rank}}
\newcommand\cbbH{\overline{\bbH}}
\newcommand\tbbH{\widetilde{\bbH}}
\newcommand\U{\operatorname{U}}
\newcommand\tE{\widetilde{E}}
\newcommand\Ld{\operatorname{L}}
\newcommand\tcE{\widetilde{\cE}}
\newcommand\spec{\operatorname{spec}}
\newcommand\GL{\operatorname{GL}}
\newcommand\ad{\operatorname{ad}}
\newcommand\Ad{\operatorname{Ad}}
\begin{document}
\title[Families index for parabolic bundles]{Some index formul\ae{} on the moduli space of
stable parabolic vector bundles}
\author{Pierre Albin}
\author{Fr\'ed\'eric Rochon}
%\date{\today}
\address{Department of Mathematics, University of Illinois at Urbana-Champaign}
\email{palbin@illinois.edu} 
\address{Department of Mathematics, Australian National University}
\email{frederic.rochon@anu.edu.au} 
\thanks{The first author was partially supported by a NSF postdoctoral fellowship. The second author was supported by a NSERC discovery grant.}

\newcommand\datver[1]{\def\datverp%
 {\par\boxed{\boxed{\text{Version: #1; Run: \today}}}}}
\datver{0.9; Revised: September 8, 2010}

%temporary
\newcommand\Sec[1]{\cS_{ #1 }} %Fiber Second fundamental form
\newcommand\Curv[1]{\hat \cR_{ #1 }} %Fiber curvature

\begin{abstract}
We study natural families of $\db$-operators on the moduli space of stable parabolic vector bundles.
Applying a families index theorem for hyperbolic cusp operators from our previous work, we find  formul\ae{} for the Chern characters of the associated index bundles. 
The contributions from the cusps are explicitly expressed in terms of the Chern characters of natural vector bundles related to the parabolic structure.
We show that our result implies formul\ae{} for the Chern classes of the associated determinant bundles consistent with a result of Takhtajan and Zograf.
\end{abstract}

\maketitle

\section*{introduction}

The Atiyah-Singer index theorem has various generalizations on non-compact manifolds and manifolds with
boundary, among the most famous being the Atiyah-Bott index theorem \cite{Atiyah-Bott} for local elliptic boundary conditions and the Atiyah-Patodi-Singer index theorem \cite{APS} for global elliptic boundary conditions or cylindrical ends. 
Part of the reason explaining the great variety of 
possible generalizations is that, on a non-compact manifold, the index of an elliptic operator depends
in a subtle way on its behavior at infinity.  
For instance, in contrast to closed manifolds, elliptic operators are not necessarily Fredholm.
%In fact, even to be a Fredholm operator, ellipticity is not enough.  
Some extra conditions have to be satisfied at infinity, the precise conditions depending on which
type of operators one considers.  

Often, the non-compact manifold is described as the
interior of a manifold with boundary (or a manifold with corners) so that the behavior of the operator 
at infinity is encoded by a symbol defined on the boundary -- usually called the normal operator or the indicial 
family.  A corresponding calculus of pseudodifferential operators in which one can construct parametrices then  allows one to identify the elliptic operators which are Fredholm as those whose normal operator is invertible, intuitively an ellipticity condition at infinity.  Over the years, various types of 
pseudodifferential calculi have been introduced on non-compact manifolds , for instance  
the b-calculus \cite{Melrose-Mendoza}, \cite{APSbook}, the $\Theta$-calculus 
\cite{Epstein-Melrose-Mendoza}, the $0$-calculus \cite{MazzeoMelrose0}, the 
edge calculus \cite{Mazzeo}, the scattering calculus \cite{MelroseGST} or  the fibred cusp calculus \cite{MazzeoMelrose}.  These calculi are usually associated to the asymptotic behavior at infinity of some
complete Riemannian metric in the sense that the associated Laplacian or Dirac type operator is an element
of the calculus.  More generally, one can talk about a Lie structure at infinity \cite{MelroseLie} and a general procedure to get a corresponding pseudodifferential calculus has been obtained by Ammann, Lauter and Nistor \cite{ALN} using groupoids.

We are interested in the situation where the non-compact manifold considered is a Riemann surface
$\Sigma= \cSigma\setminus\{p_{1},\ldots,p_{n}\}$ of genus $g$ with $n$ punctures.  Provided 
$2g-2+n>0$, such a Riemann surface admits a canonical hyperbolic metric $g_{\Sigma}$ with conformal class prescribed by the complex structure.  Near infinity, that is, near a puncture, the geometry associated
to such a metric is the one of a cusp.  The pseudodifferential operators one gets out of this 
geometry at infinity are called $\hc$-operators (or $d$-operators in the terminology of Vaillant
\cite{Vaillant}).  In his thesis \cite{Vaillant}, Vaillant studied in great detail the Dirac type 
operators associated to geometries that asymptotically behave like a cusp or a fibred cusp on a non-compact manifold
(not necessarily a punctured Riemann surface).  He gave an explicit description of the continuous 
spectrum in terms of an operator at infinity and, in the Fredholm case, provided an index formula  
involving the usual Atiyah-Singer term in the interior and some eta forms defined in terms of the 
normal operator.

In general, eta forms are very hard to compute.  However, on punctured Riemann surfaces, the 
geometry at infinity is sufficiently simple to allow explicit computations.  In \cite{Albin-Rochon2},
using the generalization of Vaillant's index theorem to families from \cite{Albin-Rochon1},
this fact was put to use to get a local index theorem in terms of the Mumford-Morita-Miller classes for families of $\db$-operators 
parametrized by the moduli space of Riemann surfaces of genus $g$ with $n$ marked points.  
Using heat kernel techniques as in \cite{BF} (see also \S9-10 in \cite{BGV}), it was also possible 
to give an alternate proof of the formula of Takhtajan and Zograf \cite{TZ} for the curvature
of the Quillen connection defined on the corresponding determinant line bundle.

In this paper, we want to apply the generalization of Vaillant's index theorem to families
(Theorem 4.5 in \cite{Albin-Rochon1}) to another moduli space, namely, the moduli space of stable
parabolic vector bundles with vanishing parabolic degree on a Riemann surface $\cSigma$ with marked points. 
A parabolic vector bundle is a holomorphic vector bundle on $\cSigma$ together with a parabolic structure
specified at each marked point by a set of weights and multiplicities.  According to 
\cite{Boden-Yokogawa}, for generic weight systems, the moduli space $\cN$ of stable parabolic vector
bundles of vanishing parabolic degree admits a universal parabolic vector bundle, that is, a holomorphic
vector bundle $E\to \cSigma\times \cN$ such that for each $m\in \cN$,  $\left. E\right|_{\cSigma\times \{m\}}$ represents
the parabolic vector bundle described by the point $m$.  In this context, one gets a family
of $\db$-operators $\db_{E}$ parametrized by $\cN$ acting fibrewise on sections of
$\left. E\right|_{\cSigma\times \{m\}}$ above $m\in \cN$.  Similarly, one gets a family of 
$\db$-operators for the endomorphism bundle $\cE:= \End(E)$.  This family is of particular 
importance for the moduli space $\cN$ since there is a canonical identification of the 
tangent bundle of $\cN$ with the cokernel bundle $\coker\db_{\cE}\to \cN$.
  
In \cite{TZ2}, Takhtajan and Zograf were able to define the Quillen connection on the determinant
line bundle of the family $\db_{\cE}$ using an appropriate Selberg Zeta function.  More importantly,
they obtained an explicit formula for its curvature, and consequently for the first Chern form of the tangent
bundle of $\cN$.  Their formula involves the usual Atiyah-Singer term and a 
cuspidal defect.

In this paper we show that, as operators on a punctured Riemann surface $\Sigma$, $\db_{E}$ and $\db_{\cE}$ are smooth families of Fredholm Dirac type $\hc$-operators, so that the families index formula of \cite{Albin-Rochon1} applies to them.  We then perform a computation of the eta forms to express them in terms of explicit data coming from the parabolic structure.  This leads to local families index formul\ae{} for the families $\db_{E}$ and 
$\db_{\cE}$ (theorem~\ref{if.17} and theorem~\ref{if.28}).  In both cases, we are 
also able, as in \cite{Albin-Rochon2}, to define the Quillen metric on the determinant line bundle via
heat kernel techniques and identify its curvature with the $2$-form part of the index formula
(theorem~\ref{ds.3} and theorem~\ref{dlb.11}).  For the determinant line of the family $\db_{\cE}$,
our formula agrees with the one of Takhtajan and Zograf (Theorem 2 in \cite{TZ2}).

The paper is organized as follows.  In \S\ref{set.0}, we review the definitions and the various properties
surrounding the notion of parabolic vector bundle.  In \S\ref{db.0}, we describe how the $\db$-operator 
associated to a parabolic vector bundle can be seen as a Dirac type $\hc$-operator and we use the
criterion of Vaillant to check that it is Fredholm.  In \S\ref{ms.0}, we consider  families of such 
operators parametrized by the moduli space of stable parabolic vector bundles of parabolic degree
equal to zero.  In particular, following \cite{TZ2}, we describe the natural connection that can be put on the universal 
parabolic vector bundle $E\to \Sigma \times \cN$.  In \S\ref{if.0}, we compute explicitly the eta
forms involved in the index formul\ae{} and get our main results, theorem~\ref{if.17} and theorem~\ref{if.28}.
Finally, in \S~\ref{dlb.0}, we define the Quillen connection of the determinant line bundle of the
families $\db_{E},\db_{\End(E)}$ and compute their curvature.  In Appendix~\ref{appendixA}, we also quickly indicate how the short time asymptotic of the
regularized trace of the heat kernel can be deduced from the work of Vaillant \cite{Vaillant} and the pushforward theorem.

\begin{acknowledgments}
The authors are grateful to MSRI for its hospitality and support while part
of this work was conducted.  The authors wish also to thank 
Ben Brubaker, Daniel Grieser, Johan Martens, Rafe Mazzeo, Richard Melrose and Leon Takhtajan for helpful discussions.
\end{acknowledgments}

\section{Stable parabolic vector bundles} \label{set.0}

Let $\cSigma$ be a compact Riemann surface of genus $g$ and negative Euler characteristic (i.e., $g \geq 2$).
It is well-known that a representation of the fundamental group
\begin{equation*}
	\rho: \pi_1(\cSigma) \to \U(k)
\end{equation*}
gives rise to a holomorphic vector bundle over $\cSigma$, $E_\rho$. Indeed, since $\bbH$ is the universal covering space of $\cSigma$, it suffices to take the product $\bbH \times \bbC^k$ and mod out by the action of $\pi_q(\cSigma)$ induced by $\rho$. Furthermore, the trivial metric and connection on the bundle $\bbH \times \bbC^k \to \bbH$ descend to a Hermitian metric and compatible flat holomorphic connection on $E_\rho$. 
This connection determines $\rho$ as its holonomy representation; which shows that only vector bundles arising from complex representations of the fundamental group admit flat holomorphic connections.

It is also possible to give a criterion in terms of geometric invariant theory to describe what kind
of holomorphic vector bundles arises in this way.  If $E$ is a holomorphic vector bundle over $\cSigma$,
its slope is the quotient 
\begin{equation}
   \mu(E):= \frac{\deg(E)}{\rank(E)}
\label{slope}\end{equation}
where $\deg(E)= c_{1}(E)[\cSigma]$ is the degree of $E$.  The vector bundle $E$ is said to be stable
if whenever $F$ is a holomorphic sub-bundle, we have
\[
       \mu(F)<\mu(E).
\]
A theorem of Narasimhan and Seshadri \cite{Narasimhan-Seshadri} asserts (among other things) that a holomorphic vector
bundle $E$ of degree zero is stable if and only if it is isomorphic to a vector bundle induced from an
\textbf{irreducible} representation $\rho: \pi_{1}(\cSigma)\to \U(k)$ where $k$ is the rank of $E$.  
In particular, a holomorphic vector bundle $E$ is of the form $E_{\rho}$ for some unitary
representation $\rho$ if and only if it is a direct sum of stable vector bundles of degree zero.

Donaldson \cite{Donaldson} gave a new proof of the theorem of Narasimhan and Seshadri using gauge theory. He found a geometric interpretation of stability by showing that a holomorphic bundle $E$ over $\cSigma$ is stable if and only if there is a unitary connection on $E$ having constant central curvature $R = -2\pi i \mu(E) (*1)$. Since this implies that stable vector bundles of degree zero admit a flat connection, the
theorem of Narasimhan and Seshadri follows immediately.

Next consider the situation where the Riemann surface $\cSigma$ has some marked points.  Precisely,
let $\cSigma$ be a compact Riemann surface of genus $g$ and $S = \{ p_1, \ldots, p_n\}$ a subset consisting of $n$ distinct points.
Assume that the Euler characteristic of the punctured Riemann surface
$\Sigma:= \cSigma\setminus S$ (equal to $2-2g-n$) is negative.
By the uniformization theorem for Riemann surfaces, one can represent the punctured Riemann
surface $\Sigma$ as the quotient 
\begin{equation}
     \Sigma = \Gamma \setminus \bbH, \quad \Gamma\cong \pi_{1}(\Sigma) 
\label{set.6}\end{equation}
of the hyperbolic half-plane $\bbH:=\{z\in \bbC\, | \, \Im(z)>0\}$ by 
a torsion-free Fuchsian group $\Gamma$ generated by hyperbolic transformations
$A_{1},B_{1},\cdots, A_{g},B_{g}$ and parabolic transformations $S_{1},\ldots,S_{n}$
satisfying the single relation
\begin{equation}
   A_{1}B_{1}A^{-1}_{1}B^{-1}_{1}\cdots A_{g}B_{g}A^{-1}_{g}B^{-1}_{g} S_{1}\cdots S_{n}=1.
\label{set.7}\end{equation}

Let $x_{1},\ldots,x_{n}$ be the fixed points of $S_{1},\ldots, S_{n}$ and let $\overline{\bbH}$ be the union
of $\bbH$ with the set of all points fixed by a parabolic element of $\Gamma$.  As pointed out in \cite{TZ2}, the action of $\Gamma$
naturally extends to $\overline{\bbH}$ so that $\cSigma\cong \Gamma\setminus \overline{\bbH}$ and   the
image of $x_{1},\ldots,x_{n}$ under the quotient map are precisely the marked points $p_{1},\ldots,p_{n}$
on $\cSigma$.  For the fixed point $x_{i}$ of $S_{i}$, it is convenient to choose
an element $\sigma_{i}\in \SL(2,\bbR)$ such that $\sigma_{i}(\infty)= x_{i}$ and
\[
\sigma_{i}^{-1}S_{i}\sigma_{i}= \left(\begin{array}{cc} 1 & \pm 1 \\
                                                       0 & 1 
                                     \end{array} \right). 
\]
This provides a local coordinate $\zeta_{i}= e^{2\pi i \sigma_{i}^{-1}z}$ near $p_{i}\in \Gamma\setminus \overline{\bbH}$.

Let $\rho: \Gamma\to \U(k)$ be a unitary representation, where $\U(k)$ is the group
of $k\times k$ unitary matrices.  Using
the representation $\rho$, we can define an action of the group $\Gamma$ on the trivial vector bundle $\bbH\times \bbC^{k}$ by
\begin{equation}
    \begin{array}{llcl}
      \gamma: & \bbH\times \bbC^{k} & \to & \bbH\times \bbC^{k} \\
              & (z,v) & \mapsto & (\gamma z, \rho(\gamma)v)  
    \end{array}
\label{set.9}\end{equation}
The quotient of this action define a flat Hermitian bundle $E_{\rho}$ on the punctured Riemann surface
$\Sigma:= \cSigma\setminus\{p_{1},\ldots,p_{n}\}$.  As described in (\cite{Mehta-Seshadri}, definition 1.1), one can also obtain a holomorphic vector bundle 
on $\cSigma$ by considering the sheaf $\cV$ of $\Gamma$-invariant holomorphic sections
of $\bbH\times \bbC^{k}$ which are bounded at the cusps.  This is  playing  the r\^ole of the sheaf of 
$\Gamma$-invariant holomorphic sections of $\cbbH\times \bbC^{k}$.  The direct image of this sheaf under
the canonical map $p^{\Gamma}:\cbbH\to \cSigma$ is locally free of rank $k$, so defines
a holomorphic vector bundle
\begin{equation}
  \overline{E}_{\rho}\to \cSigma
\label{set.10}\end{equation}
of rank $k$.  This is a typical example of a parabolic vector bundle, the definition of which we now recall.

\begin{definition}
A \textbf{parabolic structure} on a holomorphic vector bundle $\pi:E\to \cSigma$ consists in giving at
each point $p\in S$ 
\begin{itemize}
\item a flag $E_{p}= F_{1}E_{p}\supset F_{2}E_{p}\supset \cdots \supset F_{r(p)}E_{p}\supset F_{r(p)+1}E_{p}=\emptyset$,
\item weights $\alpha_{1}(p),\cdots, \alpha_{r(p)}(p)$ associated to $F_{1}E_{p},\ldots, F_{r(p)}E_{p}$ in such a way that $0\le \alpha_{1}(p)<\alpha_{2}(p)<\cdots <\alpha_{r(p)}(p)<1$.
\end{itemize}
The \textbf{multiplicity} of the weight $\alpha_{i}(p)$ is $k_{i}(p):=\dim F_{i}E_{p} - \dim F_{i+1}E_{p}$.
A \textbf{parabolic vector bundle} is a holomorphic vector bundle equipped with a 
parabolic structure.
\label{set.1}\end{definition}

For the bundle $\overline{E}_{\rho}$, the  natural parabolic structure is specified at $p_{i}\in S$ by the eigenspaces and the 
eigenvalues of $\rho(S_{i})$ where $S_{i}$ is the parabolic element fixing the cusp associated
to $p_{i}$.  Indeed, let 
\begin{equation}
 \bbC^{k}= \bigoplus_{j=1}^{r(p_{i})} \overline{E}_{p_{i}}^{j}
\label{set.11}\end{equation}
be a decomposition of $\bbC^{k}$ in terms of the eigenspaces $\overline{E}^{j}_{p_{i}}$ of
$\rho(S_{i})$ with eigenvalue $\lambda_{j}(p_{i})= e^{2\pi i \alpha_{j}(p_{i})}$.  Assume
that they are ordered in such a way that $0\le \alpha_{1}(p_{i})<\alpha_{2}(p_i)<\cdots <
\alpha_{r_{i}}(p_{i})<1$.  Then the parabolic structure at $p_i$ is given by 
\begin{equation}
   F_{j}E_{p_{i}}:= \bigoplus_{m=j}^{r_{i}} \overline{E}_{p_{i}}^{m}, \quad \mbox{with weight}                       \; \alpha_{j}(p_{i}),
\label{set.12}\end{equation}
where, as described in (\cite{Mehta-Seshadri}, p.208), the identification of 
$\overline{E}_{\rho}$ with the trivial bundle $\underline{\bbC}^{k}$ is given near $p_{i}$ 
by
\begin{equation}
    \begin{array}{ccc}
       \bigoplus_{j=1}^{r_{i}} \CI(\cU;\overline{E}^{j}_{p_{i}}) & \to & \CI(\cU;\overline{E}_{\rho}) \\
        (\sigma_{1},\ldots,\sigma_{r_{i}})& \mapsto & \sum_{j=1}^{r_{i}} 
          \zeta_{i}^{\alpha_{j}(p_{i})} \sigma_{j}
    \end{array}
\label{set.13}\end{equation}
with $\zeta_{i}= e^{2\pi i \sigma_{i}^{-1}z}$ the complex coordinate introduced earlier.  Since $\rho$ is a unitary representation, the bundle
$\overline{E}$ comes with a natural Hermitian metric $h_{E}$ when restricted to the punctured Riemann
surface $\Sigma$.  This Hermitian metric extend to a Hermitian metric $h_{\overline{E}}$ on 
$\cSigma$ which degenerates at the punctures $p_{1},\ldots,p_{n}$.  In the trivialization \eqref{set.13},
it takes the form
\begin{equation}
  h_{\overline{E}}\left( \sum_{j=1}^{r_{i}} 
          \zeta_{i}^{\alpha_{j}(p_{i})} \sigma_{j}, \sum_{j=1}^{r_{i}} 
          \zeta_{i}^{\alpha_{j}(p_{i})} \sigma_{j}\right) =
           \sum_{j=1}^{r_{i}} |\zeta_{i}|^{2\alpha_{j}(p_{i})} |\sigma_{j}|^{2}
\label{set.13b}\end{equation}

The {\bf parabolic degree} of a parabolic bundle is defined by
\begin{equation*}
	\pardeg(E) = \deg(E) + \sum_{p \in S} \sum_{j=1}^{r(p)} \alpha_j(p) k_j(p),
\end{equation*}
and its {\bf parabolic slope} (again denoted $\mu(E)$) is the ratio of its parabolic degree and its rank.
A holomorphic sub-bundle of $E$ with its induced parabolic structure is known as a parabolic sub-bundle of $E$, and $E$ is said to be a {\bf stable parabolic bundle} if the parabolic slope of any parabolic sub-bundle is strictly smaller than the parabolic slope of $E$.
The Mehta-Seshadri theorem \cite{Mehta-Seshadri} says that a parabolic vector bundle over $\cSigma$ arises from an irreducible unitary representation of $\pi_1(\cSigma \setminus S)$ if and only if it is parabolically stable and has vanishing parabolic degree.

%The parabolic structure on $E$ can be encoded in the notion of a `logarithmic connection', a concept introduced by Deligne.
%The trivial flat connection on $\bbH \times \bbC^k \to \bbH$ induces a flat unitary connection on $E \to \Sigma$ and a %logarithmic connection on $\cE \to \cSigma$. This connection is degenerate at each cusp where locally it has the form $d + i %\diag(\alpha_j) d\theta$, with $\theta$ the argument around the cusp and $\alpha_j$'s the eigenvalues of $\rho$ at the cusp (as %above). The eigendata of $\rho$ at each cusp can be recovered as the `residual' data of the connection at that cusp.

Biquard \cite{Biquard} proved the analogue of Donaldson's theorem for parabolic bundles, namely, that a parabolic bundle is parabolically stable precisely when it admits a unitary connection with curvature $R = -2\pi i \mu(E) (*1)$, hence recovering the theorem of Mehta-Seshadri when $\mu(E) = 0$.  Notice that while Mehta and Seshadri worked with rational weights, Biquard's proof works for arbitrary real weights.

From the Mehta-Seshadri theorem and its generalization by Biquard, the moduli space $\cN$ of stable parabolic vector bundles
of rank $k$ and parabolic degree zero with prescribed weights and multiplicities at $p_{1},\ldots
p_n\in \cSigma$ is given by 
\begin{equation}
    \cN= \hom( \Gamma, \U(k))^{0}/ U(k)
\label{set.15}\end{equation}  
where $\hom(\Gamma,\U(k))^{0}$ is the space of irreducible admissible representations 
$\Gamma\to \U(k)$ for the prescribed weights and multiplicities with $\U(k)$ acting on this
space by conjugation.  Recall that a representation $\rho:\Gamma\to \U(k)$ is
\textbf{admissible} with respect to a system of weights and multiplicities if the 
corresponding parabolic vector bundle $\overline{E}_{\rho}$ has a parabolic structure
compatible with this set of weights and multiplicities.  

Although $\cSigma$ is perhaps the most natural compactification of $\Sigma$, our approach will be to consider a different compactification of $\Sigma$, also natural, to a manifold with boundary. By replacing each marked point in $\cSigma$ with a circle, we keep track of the `direction' of approach to the cusp. In contrast to $\cSigma$, this has the advantage that the natural metric and connection of a stable parabolic vector bundle of degree zero extend non-singularly to the compactification. We will measure regularity in a way adapted to the degeneracy of the hyperbolic metric at the cusps by working with a class of adapted differential operators called `hyperbolic cusp' differential operators.

\section{ The $\db$-operator for stable parabolic vector bundles}\label{db.0}

Fix an irreducible representation $\rho:\Gamma\to \U(k)$ with 
prescribed weights and multiplicities and let $\overline{E}=\overline{E}_{\rho}$ be the corresponding stable parabolic vector bundle of degree zero.  
Since $\overline{E}$ is in particular a holomorphic vector bundle, this 
means there is a $\db$-operator 
\begin{equation}
  \db_{\overline{E}}: \CI(\cSigma;\overline{E})\to \CI(\cSigma;
\Lambda^{0,1}_{\cSigma}\otimes \overline{E}).
\label{db.1}\end{equation}
We are interested in the restriction of this operator to the punctured
Riemann surface $\Sigma$,
\begin{equation}
  \db_{E}: \CI(\Sigma;E)\to \CI(\Sigma; \Lambda^{0,1}_{\Sigma}\otimes E)
\label{db.2}\end{equation}
where $E:= \left. \overline{E}\right|_{\Sigma}$ is the restriction of 
$\overline{E}$ to $\Sigma$.  We are also interested in the $\db$ operator
associated to the endomorphism bundle $\cE:= \End(E)$,
\begin{equation}
\db_{\cE}: \CI(\Sigma;\cE)\to \CI(\Sigma; \Lambda^{0,1}_{\Sigma}\otimes \cE).
\label{db.3}\end{equation}
To study these operators from
the point of view of hyperbolic cusp operators ($\hc$-operators), 
consider the radial blow-up $\tSigma$ of $\cSigma$ at the points
$p_{1},\ldots,p_{n}$ with blow-down map
\begin{equation}
        \beta: \tSigma\to \cSigma.
\label{db.4}\end{equation}
The uniformization theorem for Riemann surfaces specifies a choice of 
boundary defining function $\rho_{\Sigma}$ on $\tSigma$, that is,
a function $\rho_{\Sigma}\in \CI(\tSigma)$ positive in the interior
such that $\left.\rho_{\Sigma}\right|_{\pa\tSigma}\equiv 0$ and with
$d\rho_{\Sigma}$ nowhere zero on the boundary.  Namely, let 
$g_{\Sigma}$ be the canonical hyperbolic metric in the 
conformal class specified by the complex structure.  Near a point 
$p\in S$, choose a complex coordinate $\zeta:= e^{2\pi i z}$ on
$\cSigma$ with $\zeta(p)=0$ such that 
\begin{equation}
   g_{\Sigma}= \frac{ dx^{2}+ dy^{2}}{y^{2}}, \quad z=x+iy
\label{db.5}\end{equation} 
in this coordinate.  One can use instead the polar coordinates
\begin{equation}
 \theta = x, \quad  r= \frac{1}{y}
\label{db.6}\end{equation}
which also make sense on $\tSigma$ with $r=0$ corresponding to the
boundary component $\pa \tSigma_{p}:= \beta^{-1}(p)$.  Near 
$\pa \tSigma_{p}$, we choose the boundary defining function $\rho_{\Sigma}$
to be given by
\begin{equation}
   \rho_{\Sigma}(\theta,r)=r.
\label{db.7}\end{equation} 
Doing this near each boundary component and extending $\rho_{\Sigma}$ to the
interior as a positive function, we get the desired boundary defining 
function.  

Let $\tbbH$ be the universal cover of $\tSigma$ so that 
\begin{equation}
  \tSigma= \Gamma \setminus \tbbH
\label{db.8}\end{equation}
where we use the canonical identification $\pi_{1}(\tSigma)=\pi_{1}(\Sigma)=\Gamma$.  On $\tbbH$ there is a `blow-down' map $\beta_{\bbH}:\tbbH\to \cbbH$ 
compatible with the action of $\Gamma$.  Since the action of $\Gamma$ is 
free on $\tbbH$, this means that $\tbbH$ is in a sense a free resolution
of the action of $\Gamma$ on $\cbbH$.  In particular, on $\tSigma$, 
the pulled back vector bundle $\tE:=\beta^{*}\overline{E}$ 
can be directly described as a quotient of the trivial vector bundle
$\tbbH\times \bbC^{k}$ on $\tbbH$,
\begin{equation}
  \tE:= \Gamma\setminus \left(\tbbH\times \bbC^{k}  \right)
\label{db.9}\end{equation} 
with $\gamma\in \Gamma$ acting on $\tbbH\times \bbC^{k}$ by
\begin{equation}
  \begin{array}{lccc}
   \gamma: & \tbbH\times \bbC^{k} & \to & \tbbH\times \bbC^{k} \\
           & (z,v) & \mapsto & (\gamma z, \rho(\gamma)v).
  \end{array}
\label{db.10}\end{equation}
Since $\rho$ is a unitary representation, the canonical Hermitian metric on
$\tbbH\times \bbC^{k}$ descends to a Hermitian metric $h_{\tE}$ on 
$\tE$.  With this metric and the corresponding Chern connection,
the vector bundle $\tE$ becomes a flat Hermitian vector bundle with
holonomy specified by the representation $\rho$.  In particular,
 it is locally isomorphic to the trivial Hermitian bundle 
$\underline{\bbC}^{k}$.  Under such a local identification near the boundary, 
the operator $\db_{\tE}$ can be written as
\begin{equation}
   \db_{\tE}= \frac{1}{2}\left( \rho_{\Sigma} d\theta + i\frac{d\rho_{\Sigma}}{\rho_{\Sigma}}\right) \left( \frac{1}{\rho_{\Sigma}}\frac{\pa}{\pa \theta}-
i \rho_{\Sigma}\frac{\pa}{\pa \rho_{\Sigma}}\right)
\label{db.11}\end{equation}
in the polar coordinates $(r,\theta)$, \cf equation (3.3) in \cite{Albin-Rochon2} .
The section $\left( \rho_{\Sigma} d\theta + i\frac{d\rho_{\Sigma}}{\rho_{\Sigma}}\right)$ of $\Lambda^{0,1}_{\Sigma}$ becomes singular as one approaches the
boundary.  However, it is smooth up to the boundary as a section
of ${}^{\hc}\Lambda^{0,1}_{\tSigma}$, which is defined to be 
the $(0,1)$ part of the
complexified hyperbolic cusp cotangent bundle
\begin{equation}
  {}^{\hc}T^{*}\tSigma\otimes_{\bbR} \bbC = 
{}^{\hc}\Lambda^{1,0}_{\tSigma} \oplus {}^{\hc}\Lambda^{0,1}_{\tSigma},
\label{db.12}\end{equation}
 where the  bundle ${}^{\hc}T^{*}\tSigma$ is defined in such a way that there is 
a canonical identification 
\begin{multline}
\CI(\tSigma; {}^{\hc}T^{*}\tSigma)= 
\{ s \in \CI(\tSigma;T^{*}\tSigma) \quad | \quad 
  \exists C>0  \, \mbox{such that} \\
       g_{\Sigma}(s(z),s(z))\le C \; \forall \, z\in \Sigma \}.
\label{db.13}\end{multline}
In that way, $\db_{E}$ can be seen as a $\hc$-operator ($d$-operator
in the terminology of Vaillant \cite{Vaillant}),
\begin{equation}
  \db_{E}: \CI( \tSigma; \tE) \to \frac{1}{\rho_{\Sigma}}\CI(\tSigma;
   {}^{\hc}\Lambda^{0,1}\tSigma\otimes \tE).
\label{db.14}\end{equation}
To keep the discussion short, we refer the reader to
the first three sections of \cite{Albin-Rochon2} for a quick review 
of $\hc$-operators and a similar construction, and to
\cite{Vaillant} and \cite{Albin-Rochon1} for further details.  

Similarly,
 the operator $\db_{\cE}$ can be seen as a $\hc$-operator
\begin{equation}
  \db_{\cE}: \CI( \tSigma; \tcE) \to \frac{1}{\rho_{\Sigma}}\CI(\tSigma;
   {}^{\hc}\Lambda^{0,1}\tSigma\otimes \tcE).
\label{db.15}\end{equation}
where $\tcE:= \End(\tE)$.
The metric $g_{\Sigma}$ induces  Hermitian metrics on 
$\Lambda^{0,1}_{\Sigma}$ and on ${}^{\hc}\Lambda^{0,1}_{\tSigma}$.   
Together with the natural Hermitian metrics $h_{\tE}$ and 
$h_{\tcE}$, this therefore defines Hilbert spaces
$\cH_{\tE,j}:=\Ld^{2}(({}^{\hc}\Lambda^{0,1})^{j}\otimes\tE)$ and 
$\cH_{\tcE,j}:=\Ld^{2}(({}^{\hc}\Lambda^{0,1})^{j}\otimes\tcE)$ 
for $j\in\{0,1\}$ with inner product given
by
\begin{equation}
\begin{gathered}
\langle f_{1},f_{2}\rangle_{\cH_{\tE,j}}: = 
\int_{\tSigma} \langle f_{1}(z),f_{2}(z)\rangle_{({}^{\hc}\Lambda^{0,1})^{j}\otimes \tE} \;dg_{\Sigma}(z) ; \\
\langle f_{1},f_{2}\rangle_{\cH_{\tcE,j}}: = 
\int_{\tSigma} \langle f_{1}(z),f_{2}(z)\rangle_{({}^{\hc}\Lambda^{0,1})^{j}\otimes \tcE} \; dg_{\Sigma}(z) ; 
\end{gathered}
\label{db.16}\end{equation} 
where $dg_{\Sigma}$ is the natural extension of the volume form
of $g_{\Sigma}$ to $\tSigma$.  With these inner products, we 
can define the formal adjoints $\db^{*}_{E}$ and
$\db^{*}_{\cE}$ of $\db_{E}$ and $\db_{\cE}$.  Recall then that the 
operators
\begin{equation}
   D_{E}:= \sqrt{2}(\db_{E}+\db^{*}_{E}), \quad 
   D_{\cE}:= \sqrt{2}(\db_{\cE}+\db^{*}_{\cE})
\label{db.17}\end{equation}
can be interpreted as Dirac type $\hc$-operators  acting on the 
Clifford modules $(\underline{\bbC}\oplus {}^{\hc}
\Lambda^{0,1}_{\Sigma})\otimes \tE$ and 
$(\underline{\bbC}\oplus {}^{\hc}
\Lambda^{0,1}_{\Sigma})\otimes \tcE$ with Clifford action given by
\begin{equation}
   c(f)= \sqrt{2}( \varepsilon(f^{0,1})- \iota(f^{1,0})), 
\quad f\in \CI(\tSigma;{}^{\hc}\Lambda_{\tSigma})
\label{db.18}\end{equation}
 where $\varepsilon(f^{0,1})$ denotes exterior multiplication
by the form $f^{0,1}$.  In his thesis, Vaillant provided a criterion to
determine when a Dirac type $\hc$-operator $\eth_{\hc}$ is Fredholm. 
For a Dirac type $\hc$-operator $\eth_{\hc}$ acting on  $\CI(\tSigma;W)$, the criterion
is relatively easy to formulate.  One first needs to introduce the 
vertical operator
\begin{equation}
    \eth_{\hc}^{V}:= \left. \rho_{\Sigma}\eth_{\hc}\right|_{\pa \tSigma}
\label{db.19}\end{equation}    
on the boundary $\tSigma$.  When $\ker \eth_{\hc}^{V}$ is non-trivial, one can
also introduce a horizontal operator
\begin{equation}
  \eth_{\hc}^{H}: \ker \eth_{\hc}^{V}\to \ker \eth_{\hc}^{V}
\label{db.20}\end{equation}
defined by 
\begin{equation}
   \eth^{H}_{\hc}\xi = \Pi_{0} (\left. \eth_{\hc}\tilde{\xi}\right|_{\pa \tSigma} )
\label{db.21}\end{equation}
where $\xi\in \ker \eth^{V}_{\hc}$, $\tilde\xi\in \CI(\tSigma;W)$ is a smooth
extension of $\xi$ in the interior and $\Pi_{0}$ is the orthogonal 
projection from $\Ld^{2}(\pa\tSigma;W)$ to $\ker \eth^{V}_{\hc}$.  
In terms of these operators, the criterion of Vaillant can be formulated
as follows.
\begin{proposition}[Vaillant \cite{Vaillant}, \S 3]
The continuous spectrum of a  Dirac type self-adjoint operator 
$\eth_{\hc}\in \Psi^{1}_{\hc}(\tSigma;W)$ is governed by the spectrum
of the horizontal operator $\eth^{H}_{\hc}$ with bands of continuous spectrum
starting at the eigenvalues of $\eth^{H}_{\hc}$ and going to infinity.
In particular, $\eth_{\hc}$ is Fredholm if and only if $\eth^{H}_{\hc}$ is
invertible and $\eth_{\hc}$ is Fredholm and  has discrete spectrum whenever $\eth^{V}_{\hc}$ is 
invertible.
\label{db.22}\end{proposition}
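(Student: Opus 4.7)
The plan is to reduce the Fredholm and spectral analysis of $\eth_{\hc}$ to the invertibility of its normal operator at the boundary, which is the guiding principle of the pseudodifferential calculi constructed in the Melrose school. Since the principal symbol of a Dirac type $\hc$-operator is elliptic in the interior, the only obstruction to Fredholmness comes from $\pa\tSigma$ and is encoded in the normal operator $N(\eth_{\hc})$; invertibility of this model operator is both necessary and sufficient for Fredholmness, and its generalized spectral decomposition produces the continuous spectrum of $\eth_{\hc}$.

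First I would exploit the $\mathbb{S}^{1}$-symmetry of each boundary component $\pa\tSigma_{p}$ to decompose $N(\eth_{\hc})$ by Fourier modes in $\theta$. For each nonzero Fourier mode $n \in \bbZ \setminus\{0\}$, the resulting ordinary differential problem in the $\rho_{\Sigma}$-direction is invertible whenever the rescaled boundary action, i.e. $\eth^{V}_{\hc}$, is invertible on the fibers; the large parameter $n$ dominates the zero-order perturbations. Thus invertibility of $\eth^{V}_{\hc}$ alone already guarantees invertibility of $N(\eth_{\hc})$, with no generalized eigenfunctions surviving, proving the ``discrete spectrum'' half of the statement.

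If $\eth^{V}_{\hc}$ is not invertible, the complement of $\ker\eth^{V}_{\hc}$ is still handled by the preceding argument, but the sector on $\ker\eth^{V}_{\hc}$ requires a finer analysis. Here I would combine the Fourier decomposition in $\theta$ with the Mellin transform in $\rho_{\Sigma}$; the zero Fourier mode produces a holomorphic family in the Mellin variable $\lambda$ whose restriction to the critical line, after commuting the Clifford generators associated with $d\rho_{\Sigma}/\rho_{\Sigma}$ and $\rho_{\Sigma}d\theta$, is precisely $\eth^{H}_{\hc} + i\lambda$ on the finite-dimensional space $\ker\eth^{V}_{\hc}$. Self-adjointness of $\eth^{H}_{\hc}$ then implies that the Mellin family is invertible on the critical line if and only if $\eth^{H}_{\hc}$ itself is invertible; otherwise each eigenvalue $\mu$ of $\eth^{H}_{\hc}$ yields generalized plane-wave eigenfunctions of $\eth_{\hc}$ whose frequency parameter traces out a band of continuous spectrum starting at $\mu$ and extending to infinity.

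The main obstacle is the delicate identification of the zero-mode contribution of the normal operator with $\eth^{H}_{\hc} + i\lambda$ acting on $\ker\eth^{V}_{\hc}$, which requires careful bookkeeping of the Clifford action mixing the two boundary conormal directions together with the various rescalings built into the $\hc$-calculus, as well as the generalized eigenfunction expansion justifying the description of the continuous spectrum. These computations, together with the derivation of the spectral bands from a stationary phase analysis of the Mellin inversion, are carried out in detail by Vaillant in Chapter 3 of \cite{Vaillant}, and the proposition follows as a direct consequence of his results.
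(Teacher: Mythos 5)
This proposition is imported verbatim from Vaillant's thesis and the paper gives no proof of its own, so the only ``approach'' to compare with is the citation of \cite{Vaillant}, \S 3. Your sketch is a fair roadmap of Vaillant's argument --- the essential mechanism being that the nonzero eigenvalues of $\eth^{V}_{\hc}$ enter the operator with a factor $\rho_{\Sigma}^{-1}$ (it is this scaling, rather than the size of the Fourier parameter, that expels that sector from the essential spectrum), while on $\ker\eth^{V}_{\hc}$ one is left with a cylindrical-end model whose indicial family is governed by $\eth^{H}_{\hc}$ --- and, like the paper, it ultimately defers every substantive step to Vaillant, which is acceptable here.
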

To apply this criterion to $D_{E}$ and $D_{\cE}$, we first need to 
describe the restriction of $\tE$ on the boundary $\pa\tSigma$.  Let
us fix a boundary component $\pa \tSigma_{p_{i}}$ for some $p_{i}\in S$.
Identify $\pa \tSigma_{p_{i}}$ with $\bbZ\setminus\bbR$ as oriented 
manifolds. Notice that the orientation of
$\pa\tSigma_{p_{i}}$ induced from $\tSigma$ is such that $\frac{\pa}{\pa u}$ 
is an oriented section of the tangent bundle where $u=-\theta$ in 
terms of the polar coordinates $(r,\theta)$.  
From that perspective, we can 
interpret the restriction $\tE_{i}:= \left.\tE\right |_{\pa\tSigma_{p_{i}}}$
as the quotient 
\begin{equation}
   \bbZ\setminus (\bbR\times \bbC^{k})
\label{db.23}\end{equation}
where $\bbR\times \bbC^{k}$ is the total space of the trivial 
vector bundle $\underline{\bbC}^{k}$ over $\bbR$ on which $\bbZ$ acts by
\begin{equation}
   \begin{array}{lccc}
    m:& \bbR\times \bbC^{k} & \to & \bbR\times \bbC^{k} \\
      &     (u,v)  & \mapsto & (u+m, \rho(S_{i})^{-m}v)
\end{array}
\label{db.24}\end{equation}
for $m\in \bbZ$.  Sections of $\tE_{i}$ then correspond to $\bbZ$-invariant
sections of the trivial bundle $\underline{\bbC}^{k}\to \bbR$.  On the 
other hand, at the boundary component $\pa \tSigma_{p_{i}}$,
the vertical operator of $D_{E}$ is given by
\begin{equation}
   D^{V}_{\tE_{i}}:= \left. \rho_{\Sigma}D_{E}\right|_{\pa \tSigma_{p_{i}}}=
  c(du) \frac{\pa}{\pa u}
\label{db.25}\end{equation}
acting on $(\underline{\bbC}\oplus {}^{\hc}\Lambda^{0,1}_{\tSigma})\otimes
\tE_{i}$.
Under the standard identification
\begin{equation}
-c\left(\frac{d\rho_{\Sigma}}{\rho_{\Sigma}}\right): 
\left.{}^{\hc}\Lambda^{0,1}_{\tSigma}\right|_{\pa \tSigma_{p_{i}}}
\otimes \tE_{i} \to \tE_{i} 
\label{db.26}\end{equation} 
given by Clifford multiplication by the element $-c\left(\frac{d\rho_{\Sigma}}{\rho_{\Sigma}}\right)$, we can rewrite the operator as 
\begin{equation}
 D^{V}_{\tE_{i}}= \left(\begin{array}{cc} 0 & \db^{V}_{\tE_{i}} \\ 
\db_{\tE_{i}}^{V} & 0 \end{array}\right)= \left( \begin{array}{cc}
                        0 &i\frac{\pa}{\pa u}  \\
                        i\frac{\pa}{\pa u}  & 0
                         \end{array} \right)\in
\Psi^{1}(\pa\tSigma_{p_{i}}; \tE_{i}\oplus \tE_{i}).
\label{db.27}\end{equation}
\begin{lemma}
The spectrum of the vertical operator $\db^{V}_{\tE_{i}}$ is given by 
\[
   \lambda_{j,k}= 2\pi(\alpha_{j}(p_{i})+k), \quad j\in\{1,\ldots,r_{i}\}, 
\quad k\in \bbZ
\]
where the eigenvalue $\lambda_{j,k}$ has multiplicity $k_{j}(p_{i})$.
\label{db.28}\end{lemma}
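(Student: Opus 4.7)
The plan is to diagonalize $\rho(S_i)$ and thereby reduce the spectral problem to a family of twisted Fourier series on the circle $\bbZ \setminus \bbR$.

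Since $\rho$ is unitary, $\rho(S_i)\in \U(k)$ is unitarily diagonalizable, and by \eqref{set.11} admits the orthogonal eigenspace decomposition $\bbC^k = \bigoplus_{j=1}^{r_i} \overline{E}^j_{p_i}$, with $\rho(S_i)$ acting on $\overline{E}^j_{p_i}$ as multiplication by $e^{2\pi i \alpha_j(p_i)}$. Each $\overline{E}^j_{p_i}$ has dimension $k_j(p_i)$ and is preserved by the $\bbZ$-action \eqref{db.24}, so this decomposition descends to an orthogonal direct sum of Hermitian bundles
\[
\tE_i = \bigoplus_{j=1}^{r_i} (\tE_i)_j
\]
on $\pa \tSigma_{p_i}\cong \bbZ\setminus\bbR$. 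Since the vertical operator $\db_{\tE_i}^V = i\pa_u$ acts diagonally with respect to this decomposition, it suffices to analyze it on each summand.

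Next I would translate the eigenvalue problem on $(\tE_i)_j$ to the cover: a section lifts to a function $f:\bbR\to \overline{E}^j_{p_i}$ satisfying the equivariance
\[
f(u+m)=\rho(S_i)^{-m}f(u)=e^{-2\pi i m\alpha_j(p_i)}f(u),\qquad m\in\bbZ.
\]
Any eigenfunction of $i\pa_u$ with eigenvalue $\lambda$ has the form $f(u)=e^{-i\lambda u}v_0$ for some $v_0\in \overline{E}^j_{p_i}$, and the equivariance condition forces $e^{-i\lambda}=e^{-2\pi i\alpha_j(p_i)}$, i.e., $\lambda=\lambda_{j,k}:=2\pi(\alpha_j(p_i)+k)$ for some $k\in\bbZ$. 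Conversely, every such $\lambda_{j,k}$ arises, and for each choice of $v_0\in \overline{E}^j_{p_i}$ we obtain a distinct eigenfunction, so the eigenspace of $\lambda_{j,k}$ on $(\tE_i)_j$ has dimension $k_j(p_i)$. Because the weights $\alpha_j(p_i)$ are strictly increasing in $[0,1)$, the pairs $(j,k)$ label distinct real numbers $\lambda_{j,k}$, and there is no coincidence between different summands.

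Completeness of $\{e^{-i\lambda_{j,k}u}\,v : k\in\bbZ,\ v\in \overline{E}^j_{p_i}\}$ in the $L^2$ sections of $(\tE_i)_j$ follows from the classical Fourier basis on $\bbZ\setminus \bbR$, applied after the unitary change of trivialization $f(u)\mapsto e^{2\pi i\alpha_j(p_i)u}f(u)$ that turns the equivariance into strict periodicity. I do not foresee a genuine obstacle; the only care required is in tracking the sign conventions in the $\bbZ$-action \eqref{db.24} and in the identification $u=-\theta$.
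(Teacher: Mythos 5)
Your proposal is correct and follows essentially the same route as the paper: decompose $\bbC^k$ into the eigenspaces of $\rho(S_i)$, reduce to the quotient bundles $\tE_{ij}$, and exhibit the twisted Fourier eigenbasis $e^{-2\pi i(k+\alpha_j(p_i))u}v$. You merely make explicit the completeness and distinctness arguments that the paper leaves implicit.
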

\begin{proof}
Let
\begin{equation}
  \bbC^{k}= \bigoplus_{j=1}^{r_{i}} W_{ij}
\label{db.29}\end{equation}
be the decomposition in terms of the eigenspaces of $\rho(S_{i})$ where 
$W_{ij}$ is the eigenspace corresponding to the eigenvalue 
$e^{2\pi i\alpha_{j}(p_{i})}$. As an operator on
$\bbR$, $\db^{V}_{\tE_{i}}$ commutes with the action of $\bbZ$.  This means
that $\db^{V}_{\tE_{i}}$ decomposes as a sum of operators
\begin{equation}
  \db^{V}_{\tE_{i}}= \bigoplus_{j=1}^{r_{i}} \db^{V}_{\tE_{ij}}
\label{db.30}\end{equation} 
with $\db^{V}_{\tE_{ij}}$ acting on sections of
\begin{equation}
    \tE_{ij}:= \bbZ\setminus (\bbR\times W_{ij}).
\label{db.31}\end{equation}
If $w_{1}^{i},\ldots,w_{k_{j}}^{i}$ is a basis of $W_{ij}$, then clearly
\begin{equation}
  e_{kl}^{ij}= e^{-2\pi i(k+ \alpha_{j}(p_{i}))u}w^{i}_{l}, 
    \quad k\in \bbZ, \quad l\in \{1,\ldots k_{j}(p_{i})\},
\label{db.31b}\end{equation}
is a basis of $\Ld^{2}(\pa\tSigma_{p_{i}};\tE_{ij})$
in terms of eigensections of $\db^{V}_{\tE_{ij}}$ with $e_{kl}^{ij}$
an eigensection with eigenvalue $\lambda_{j,k}$.  Collecting these eigenvalues
for all $j$, the result follows.
\end{proof}
\begin{proposition}
The operator $D_{E}$  is Fredholm. Moreover, if 
none of the weights of the parabolic structure of $\overline{E}$ are zero,
then its spectrum is discrete.
\label{db.32}\end{proposition}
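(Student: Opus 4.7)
The approach is to apply Vaillant's criterion (Proposition \ref{db.22}) to the self-adjoint Dirac-type $\hc$-operator $D_E$. Since $D_E$ splits as a $2\times 2$ matrix with off-diagonal entries $\db_E$ and $\db_E^*$, the vertical family $D_E^V$ inherits the off-diagonal form exhibited in \eqref{db.27}, so the spectrum of $D_E^V$ at each boundary component $\pa\tSigma_{p_i}$ is $\{\pm \lambda_{j,k}\}$, where $\lambda_{j,k}=2\pi(\alpha_j(p_i)+k)$ are the eigenvalues computed in Lemma~\ref{db.28}.

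The discrete spectrum statement is then immediate: if no weight $\alpha_j(p_i)$ vanishes, then every $\lambda_{j,k}$ is nonzero (since $\alpha_j(p_i)\in(0,1)$ and $k\in\bbZ$), so $D_E^V$ is invertible on every boundary component, and Vaillant's criterion yields that $D_E$ is Fredholm with discrete spectrum.

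For the Fredholm statement in the general case, where some weights may vanish, I would analyze the horizontal operator $\db_E^H$ on $\ker D_E^V$. The kernel of $\db_{\tE_i}^V=i\pa/\pa u$ consists, by \eqref{db.31b}, of sections constant in the circle coordinate $u$ and taking values in the zero-weight eigenspace $W_{i,j_{0}(i)}$ of $\rho(S_i)$ (with $\alpha_{j_0(i)}(p_i)=0$), and similarly for the adjoint kernel. Combining \eqref{db.13} and \eqref{db.11}, a section of the zero-weight eigensubbundle extends smoothly across $\pa\tSigma_{p_i}$ as a section of $\overline{E}$ (no factor of $\zeta_i^{\alpha_j}$ is needed), and the formula \eqref{db.21} then identifies $\db_E^H$ with a genuine holomorphic operator on the restriction of the subbundle $\bigoplus_i \overline{E}^{j_0(i)}$ to the boundary, essentially a Toeplitz-type operator coming from the $\db$-operator on $\overline{E}$.

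To show this horizontal operator is invertible, I would argue by contradiction: a nontrivial kernel element would extend to a nontrivial holomorphic section of a parabolic sub-line (or sub-bundle) of $\overline{E}$ with nonnegative parabolic degree, contradicting the stability of $\overline{E}$ (equivalently, the irreducibility of $\rho$ granted by the Mehta-Seshadri theorem). The analogous argument for the cokernel uses $\db_E^*$. The main obstacle is this last step: precisely matching the horizontal operator with a $\db$-type operator on an honest holomorphic subbundle and then invoking the parabolic stability / irreducibility of $\rho$ to rule out zero modes.
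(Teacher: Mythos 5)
Your handling of the case where all weights are nonzero is correct and is exactly the paper's argument: by Lemma~\ref{db.28} the vertical family is invertible at every boundary component, and Proposition~\ref{db.22} gives Fredholmness with discrete spectrum. The identification of $\ker D^V_{\tE_i}$ with the constant (in $u$) sections valued in the zero-weight eigenspace is also correct.

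The gap is in the remaining case, where some $\alpha_{1}(p_{i})=0$ and one must prove that the horizontal operator $D^{H}_{\tE_{i}}$ is invertible. Your plan to realize $D^{H}_{\tE_{i}}$ as a Toeplitz/$\db$-type operator on a subbundle of $\overline{E}$ and kill its null space using parabolic stability cannot work. First, the boundary circle $\pa\tSigma_{p_{i}}$ fibres over a point, so $\ker D^{V}_{\tE_{i}}$ is a finite-dimensional vector space and $D^{H}_{\tE_{i}}$, defined by \eqref{db.21}, is merely an endomorphism of it; there is no positive-dimensional base on which a holomorphic operator could live, and its kernel has no relation to global holomorphic sections of $\overline{E}$. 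Second, Fredholmness in Vaillant's criterion is a purely local condition at the cusps, so it cannot be governed by stability, which is global: the same Fredholmness holds for unstable bundles, and conversely if the horizontal endomorphism vanished (as it does in the cylindrical, i.e.\ $b$-, setting) the operator would fail to be Fredholm no matter how stable $\overline{E}$ is. The actual mechanism is geometric (Lemma~\ref{db.35}): writing the Clifford module as $S\otimes_{\bbR}(S^{*}\otimes_{\bbC}\tE)$ for a spinor bundle $S$, Vaillant's Proposition 3.15 gives $D^{H}_{\tE_{i}}=(-iR)\,c(\pa/\pa u)$ where $iR\,dg_{\Sigma}$ is the curvature of $S^{*}\otimes\tE$; since $\tE$ is flat and $S^{*}$ is a square root of $K^{-1}$ for the hyperbolic metric, $R=\tfrac{1}{2}$ and $D^{H}_{\tE_{i}}=-\tfrac{i}{2}c(du)$, which is invertible. (This is the same curvature shift that places the bottom of the continuous spectrum of the cusp Laplacian at $1/4$.) Without this computation, or some substitute showing the horizontal endomorphism is nonzero, the Fredholm claim in the presence of zero weights is not established.
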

\begin{proof}
This will follows from proposition~\ref{db.22}.  First, if none
of the weights are zero, then it follows from lemma~\ref{db.28} that
the vertical operator $D^{V}_{E}$ is invertible, so that $D_{E}$ is Fredholm
with discrete spectrum.  
If one of the weights is zero at some point, say $\alpha_{1}(p_{i})=0$ for some  
$p_{i}\in S$, we need to check that the horizontal operator
\begin{equation}
   D^{H}_{\tE_{i}}: \ker D^{V}_{\tE_{i}}\to \ker D^{V}_{\tE_{i}}
\label{db.33}\end{equation} 
is invertible to insure that $D_{E}$ is Fredholm.  This will follow 
from the following lemma.
\end{proof}

\begin{lemma}
If $\alpha_{1}(p_{i})=0$, then the horizontal operator $D^{H}_{\tE_{i}}$ is
given by
\begin{equation}
    D^{H}_{\tE_{i}}= -\frac{1}{2}ic(du): \ker D^{V}_{\tE_{i}}\to \ker D^{V}_{\tE_{i}}.
\label{db.34}\end{equation}
In particular, it is invertible.
\label{db.35}\end{lemma}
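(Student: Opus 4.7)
The plan is to evaluate the horizontal operator $D^H_{\tE_i}\xi = \Pi_0(D_E\tilde\xi|_{\partial\tSigma_{p_i}})$ explicitly on a canonical extension of each $\xi \in \ker D^V_{\tE_i}$. By Lemma~\ref{db.28} and the block form~\eqref{db.27}, when $\alpha_1(p_i)=0$ the kernel $\ker D^V_{\tE_i}$ is spanned, in each Clifford summand, by $u$-independent sections valued in the eigenspace $W_{i1}$ on which $\rho(S_i)$ acts as the identity. Because $\rho(S_i)|_{W_{i1}}=\Id$, the subbundle $\tE_{i1}$ is the trivial flat bundle $S^1 \times W_{i1}$ on $\partial\tSigma_{p_i}$, and its flat structure extends canonically into a collar of the boundary. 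I will take $\tilde\xi$ to be the parallel extension of $\xi$ into this collar.

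I then substitute $\tilde\xi$ into $D_E = \sqrt{2}(\db_{\tE}+\db^*_{\tE})$, using~\eqref{db.11} for $\db_{\tE}$ and the expression for $\db^*_{\tE}$ obtained from the inner product~\eqref{db.16} in the ${}^{hc}$-trivialization $\omega = rd\bar z/\sqrt{2}$ of ${}^{hc}\Lambda^{0,1}_{\tSigma}$. Because $\tilde\xi$ is annihilated by $\partial_\theta$ and $\partial_r$ in the flat trivialization, every derivative term drops out; the only surviving contribution is the zeroth-order term in $\db^*_{\tE}$ produced by $\partial_r$ acting on the factor $r$ inside $rd\bar z$. Restricting to $\partial \tSigma_{p_i}$ and translating back through the Clifford identification~\eqref{db.26} should reproduce precisely $-\tfrac{i}{2}c(du)\xi$, and since this result already lies in $\ker D^V_{\tE_i}$, the projection $\Pi_0$ acts as the identity.

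Invertibility is then immediate: as $du$ is a unit covector in the ${}^{hc}$-coframe at $\partial\tSigma_{p_i}$, the Clifford relation gives $c(du)^2 = -1$ and hence $(D^H_{\tE_i})^2 = \tfrac{1}{4}\Id$, so $D^H_{\tE_i}$ is invertible with inverse $4 D^H_{\tE_i}$. The main obstacle I expect is the careful book-keeping of signs and normalizations through the identification~\eqref{db.26}, in order to confirm that the scalar produced by $\db^*_{\tE}$ matches precisely the coefficient $-i/2$ rather than some other multiple of $c(du)$.
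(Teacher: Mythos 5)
Your route—directly evaluating $\Pi_0\bigl(D_E\tilde\xi\big|_{\pa\tSigma_{p_i}}\bigr)$ on a parallel extension in the flat unitary trivialization—is not the paper's route (the paper factors the Clifford module as $S\otimes_{\bbR}(S^*\otimes_{\bbC}\tE)$, uses flatness of $\tE$, and quotes Vaillant's Proposition 3.15, which gives $D^H=(-iR)c(\pa_u)$ with $iR\,dg_\Sigma$ the curvature of the twisting bundle, so $R=\tfrac12$ because $S^*$ is a square root of $K^{-1}$). Your approach could in principle work, but the outcome you assert for the computation is not what the computation gives, and the discrepancy is exactly where the content of the lemma lies. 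In the coordinates \eqref{db.6}, with unit coframe $\omega=r\,d\bar z/\sqrt2$ and the inner product \eqref{db.16}, one has $\sqrt2\,\db f=\left(\tfrac1r\pa_\theta-ir\pa_r\right)f\,\omega$ and, since $(r\pa_r)^*=-(r\pa_r+1)$ for the hyperbolic measure $d\theta\,dr$, the adjoint carries the zeroth-order term you point to: $\sqrt2\,\db^*(g\omega)=\left(-\tfrac1r\pa_\theta-ir\pa_r-i\right)g$. But when this zeroth-order endomorphism is decomposed in Clifford terms it equals $-\tfrac i2\,c(du)+\tfrac12\,c\!\left(\tfrac{d\rho_\Sigma}{\rho_\Sigma}\right)$, not $-\tfrac i2\,c(du)$; equivalently, near the boundary
\begin{equation*}
D_E=\frac{1}{\rho_\Sigma}D^V_{\tE_i}+c\!\left(\frac{d\rho_\Sigma}{\rho_\Sigma}\right)\left(\rho_\Sigma\pa_{\rho_\Sigma}+\tfrac12\right)-\tfrac i2\,c(du),
\end{equation*}
the $\tfrac12$ being the mean curvature of the collapsing circle fibres.

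This extra term is the gap. It preserves $\ker D^V_{\tE_i}$ (it acts only on the Clifford factor), so $\Pi_0$ does not remove it, and no choice of extension helps since the extension-ambiguity lands in the image of $\pa_\theta$, which $\Pi_0$ kills. Hence the literal recipe you propose yields $-\tfrac i2 c(du)+\tfrac12 c(d\rho_\Sigma/\rho_\Sigma)$, whose square on the Clifford factor is $\tfrac14-\tfrac14=0$ (the two summands anticommute): a nilpotent, non-invertible operator, so both the claimed formula and your invertibility argument would fail as stated. The repair is to use Vaillant's actual normal form of the horizontal family, in which the radial model term $c(d\rho_\Sigma/\rho_\Sigma)\left(\rho_\Sigma\pa_{\rho_\Sigma}+\tfrac{f}{2}\right)$ (here the fibre dimension is $f=1$) is split off \emph{before} projecting onto $\ker D^V_{\tE_i}$; only then does the direct computation produce $-\tfrac i2 c(du)$, after which your concluding step ($c(du)^2=-1$, so $(D^H_{\tE_i})^2=\tfrac14\Id$) is correct. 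So the issue is not ``book-keeping of signs and normalizations'' through \eqref{db.26}: it is an entire additional Clifford term whose correct attribution is the crux, and it is precisely what the paper avoids confronting directly by invoking Vaillant's Proposition 3.15 and the curvature of $S^*=K^{-1/2}$.
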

\begin{proof}
Since $\tE$ is flat, the proof is essentially the same as in (\cite{Albin-Rochon2}, Proposition 3.1) with $\ell=0$.  Notice first
that the bundle on which $D_{E}$ acts is $(\underline{\bbC}^{k}\otimes {}^{\hc}\Lambda^{0,1}_{\tSigma}) \otimes \tE$.  Choose a spin structure on 
$\tSigma$ and let $S$ be the corresponding spinor bundle with respect to
the metric $g_{\Sigma}$.  Seen as a complex line bundle, $S$ is a 
square root of the canonical line bundle,
\begin{equation}
    S\otimes_{\bbC} S= K
\label{db.36}\end{equation}
Moreover, we have that
\begin{equation}
    (\underline{\bbC}\oplus {}^{\hc}\Lambda^{0,1}_{\tSigma}) \cong S\otimes_{\bbR}S^{*}
\label{db.37}\end{equation}
as real vector bundles.  Thus, the operator $D_{E}$ acts on
\[
     S\otimes_{\bbR}(S^{*}\otimes_{\bbC}\tE).
\]
As a bundle with connection, the bundle $S^{*}\otimes_{\bbC}\tE$ certainly
does not have a product structure near the boundary since it has 
non-zero curvature.  According to proposition 3.15 in \cite{Vaillant},
 the horizontal operator $D^{H}_{\tE_{i}}$ is given by
\begin{equation}
  (-iR) c\left(\frac{\pa}{\pa u}\right) 
\label{db.38}\end{equation} 
where $iR dg_{\Sigma}$ is the curvature of the complex vector bundle
$S^{*}\otimes \tE$, really the curvature of $S^{*}$ since $\tE$ is flat.
Since $S^{*}$ is a square root of $K^{-1}$, this means $R=\frac{1}{2}$ and 
the result follows.  
\end{proof}

For the operator $\db_{\cE}$, there is a similar discussion.  The restriction
$\tcE_{i}$ of the endomorphism bundle $\tcE$ to $\pa\tSigma_{p_{i}}$ can be described
as the quotient
\begin{equation}
   \bbZ\setminus \left(\bbR\times M_{k\times k}(\bbC)\right)
\label{db.39}\end{equation} 
where $\bbR\times M_{k\times k}(\bbC)$ is the total space of
the trivial bundle of $k\times k$ complex matrices over $\bbR$ with
$\bbZ$ action given by
\begin{equation}
   \begin{array}{lccc}
     m: & \bbR\times M_{k\times k}(\bbC) & \to & \bbR\times M_{k\times k}(\bbC)\\       & (u,A) & \mapsto & (u+m, \rho(S_{i})^{-m}A \rho(S_{i})^{m})
   \end{array}
\label{db.40}\end{equation}
for $m\in \bbZ$.  The identification 
\begin{equation}
 -c\left( \frac{d\rho_{\Sigma}}{\rho_{\Sigma}}\right): \left.
{}^{\hc}\Lambda^{0,1}_{\Sigma}\right|_{\pa \tSigma_{p_{i}}}\otimes \tcE_{i}\to
\tcE_{i}
\label{db.41}\end{equation} 
given by Clifford multiplication then allows one to write the vertical 
operator $D^{V}_{\tcE_{i}}$ as 
\begin{equation}
  D^{V}_{\tcE_{i}}=\left( \begin{array}{cc} 0 & \db^{V}_{\tcE_{i}}\\
                                \db^{V}_{\tcE_{i}} & 0 
                   \end{array}          \right)      
                                =\left(
  \begin{array}{cc}
    0 & i\frac{\pa}{\pa u} \\
   i\frac{\pa}{\pa u} & 0
   \end{array}
\right)\in \Psi^{1}(\pa\tSigma_{p_{i}}; \tcE_{i}\oplus \tcE_{i}).
\label{db.42}\end{equation}

\begin{lemma}
The spectrum of the vertical operator $\db^{V}_{\tcE_{i}}$ is given
by 
\[
   \lambda_{k}(j,l)= 2\pi( k+ \alpha_{j}(p_{i})-\alpha_{l}(p_{i}))
\quad \mbox{with multiplicity} \; k_{j}(p_{i})k_{l}(p_{i})
\]
for $k\in \bbZ$ and $j,l\in \{1,\cdots, r_{i}\}$.
\label{db.43}\end{lemma}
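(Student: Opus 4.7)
The plan is to reduce the computation to the previous Lemma by decomposing $\tcE_i = \End(\tE_i)$ according to the eigenspace decomposition \eqref{db.29} of $\bbC^k$ under $\rho(S_i)$. Writing $M_{k\times k}(\bbC) = \End(\bbC^k) = \bigoplus_{j,l=1}^{r_i} \Hom(W_{il}, W_{ij})$, the conjugation action of $\rho(S_i)$ on a block $A \in \Hom(W_{il}, W_{ij})$ is multiplication by the scalar $e^{2\pi i(\alpha_l(p_i) - \alpha_j(p_i))}$, because $\rho(S_i)$ acts on $W_{im}$ as multiplication by $e^{2\pi i \alpha_m(p_i)}$. Consequently, the $\bbZ$-action \eqref{db.40} on $\bbR \times M_{k\times k}(\bbC)$ preserves this block decomposition, yielding
\begin{equation*}
\tcE_i = \bigoplus_{j,l=1}^{r_i} \tcE_{i,jl}, \qquad
\tcE_{i,jl} := \bbZ \setminus \bigl(\bbR \times \Hom(W_{il},W_{ij})\bigr),
\end{equation*}
where $m \in \bbZ$ acts on $(u,A) \in \bbR \times \Hom(W_{il},W_{ij})$ by $(u+m, e^{-2\pi i m(\alpha_j(p_i) - \alpha_l(p_i))}A)$.

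Next, since $\db^V_{\tcE_i} = i\frac{\pa}{\pa u}$ acts scalarly on the fibers, it commutes with this decomposition, so it suffices to diagonalize its restriction $\db^V_{\tcE_{i,jl}}$ to each summand. This is precisely the setting of Lemma~\ref{db.28}, but with the single weight $\alpha_j(p_i)$ replaced by the effective weight $\alpha_j(p_i) - \alpha_l(p_i)$ and the fiber $W_{ij}$ replaced by $\Hom(W_{il},W_{ij})$.

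Concretely, for any basis $\{A^{jl}_m\}_{m=1}^{k_j(p_i)k_l(p_i)}$ of $\Hom(W_{il},W_{ij})$, the sections
\begin{equation*}
 e^{ijl}_{k,m}(u) := e^{-2\pi i (k + \alpha_j(p_i) - \alpha_l(p_i))u}\,A^{jl}_m, \qquad k \in \bbZ,
\end{equation*}
satisfy the required $\bbZ$-equivariance (the exponential absorbs the scalar coming from the action, just as in \eqref{db.31b}), form an $\Ld^2$-basis for the sections of $\tcE_{i,jl}$ over $\pa\tSigma_{p_i}$, and are eigensections of $\db^V_{\tcE_{i,jl}}$ with eigenvalue $2\pi(k + \alpha_j(p_i) - \alpha_l(p_i))$. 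Collecting contributions over all $(j,l,k)$ and noting that $\dim\Hom(W_{il},W_{ij}) = k_j(p_i) k_l(p_i)$ gives the stated spectrum and multiplicities.

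The only point requiring care is keeping track of signs in the conjugation action to confirm that $\alpha_j$ is the contribution from the target factor and $-\alpha_l$ from the source factor (equivalently, from the dual); once this is pinned down, the argument is a direct transcription of the proof of Lemma~\ref{db.28}.
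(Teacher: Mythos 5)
Your proof is correct and follows essentially the same route as the paper: decompose $M_{k\times k}(\bbC)=\bigoplus_{j,l}\Hom(W_{il},W_{ij})=\bigoplus_{j,l}W_{ij}\otimes W_{il}^{*}$ into eigenspaces of the adjoint action of $\rho(S_{i})$, observe that the $\bbZ$-action \eqref{db.40} preserves the blocks, and exhibit the explicit eigenbasis $e^{-2\pi i(k+\alpha_{j}-\alpha_{l})u}A^{jl}_{m}$, exactly as in \eqref{db.46}. The sign bookkeeping you flag at the end is consistent (the generator $m=1$ acts on a block by $e^{-2\pi i(\alpha_{j}-\alpha_{l})}$, which is what your eigensections satisfy), so there is nothing to fix.
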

\begin{proof}
In terms of the decomposition \eqref{db.29}, we have the decomposition
\begin{equation}
M_{k\times k}(\bbC)= \bbC^{k}\otimes (\bbC^{k})^{*}= \bigoplus_{j,l=1}^{r_{i}}
   W_{ij}\otimes W_{il}^{*}
\label{db.44}\end{equation}
into the eigenspaces of the adjoint action of $\rho(S_{i})$ on
$M_{k\times k}(\bbC)$.  Here, the eigenspace $W_{ij}\otimes W^{*}_{il}$ has
corresponding eigenvalue $e^{2\pi i(\alpha_{j}-\alpha_{l})}$.  With respect to
these eigenspaces, the vertical operator decomposes as
\begin{equation}
  \db^{V}_{\cE_{i}}= \bigoplus_{j,l=1}^{r_{i}} \db^{V}_{\hom(\tE_{il};\tE_{ij})}
\label{db.45}\end{equation}  
with $\db^{V}_{\hom(\tE_{il};\tE_{ij})}$ acting on sections of 
\begin{equation}
\hom(\tE_{il},\tE_{ij}):= \bbZ\setminus (\bbR\times (W_{ij}\otimes W_{il}^{*})).
\label{db.45b}\end{equation}
If $f_{1},\ldots,f_{m}$ form a basis  of $W_{ij}\otimes W_{il}^{*}$,
then 
\begin{equation}
  e^{ijl}_{kp}:= e^{-2\pi i(k+\alpha_{j}-\alpha_{l})u}
f_{p}, \quad p\in \{1,\ldots,m\},\; k\in\bbZ
\label{db.46}\end{equation}
will be a $\db^{V}_{\hom(\tE_{il},\tE_{ij})}$-eigenbasis of $\Ld^{2}(\pa\tSigma_{p_{i}}, \hom(\tE_{il};\tE_{ij}))$ with $e^{ijl}_{kp}$ having eigenvalue
$\lambda_{k}(j,l)$, from which the result follows.
\end{proof}

\begin{proposition}
The operator $\db_{\cE}$ is Fredholm.
\label{db.47}\end{proposition}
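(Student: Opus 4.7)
The plan is to apply Vaillant's Fredholmness criterion (Proposition~\ref{db.22}) to the Dirac type operator $D_{\cE} = \sqrt 2(\db_{\cE} + \db_{\cE}^*)$. Unlike the case of $D_E$ handled in Proposition~\ref{db.32}, the vertical family is generically \emph{never} invertible: Lemma~\ref{db.43} shows $\lambda_k(j,l)=2\pi(k+\alpha_j(p_i)-\alpha_l(p_i))$ vanishes whenever $k=0$ and $j=l$, so $\ker \db^V_{\tcE_i}$ contains the $\bbZ$-invariant constant sections in each diagonal block $\hom(\tE_{ij},\tE_{ij})$, for every $j\in\{1,\ldots,r_i\}$ and every $p_i\in S$. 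Consequently I must verify the second half of Vaillant's criterion: that the horizontal family $D^H_{\tcE_i}$ on $\ker D^V_{\tcE_i}$ is invertible at each boundary component.

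The key observation is that the flat Hermitian structure transfers from $\tE$ to $\tcE=\End(\tE)$: the Chern connection on $\tcE$ induced from that on $\tE$ is flat, since the curvature of an endomorphism bundle is $[R_{\tE},\cdot]$ and $R_{\tE}=0$. Therefore the computation carried out in Lemma~\ref{db.35} adapts essentially verbatim. Writing $D_{\cE}$ as acting on $S\otimes_{\bbR}(S^*\otimes_{\bbC}\tcE)$ via the identifications \eqref{db.36}--\eqref{db.37}, Proposition 3.15 of \cite{Vaillant} identifies the horizontal operator as $(-iR)c(\pa/\pa u)$ where $iR\, dg_\Sigma$ is the curvature of $S^*\otimes\tcE$. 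Since $\tcE$ is flat, only the curvature of $S^*$ contributes; $S^*$ is a square root of $K^{-1}$ so $R=\tfrac12$, giving
\begin{equation*}
    D^H_{\tcE_i} = -\tfrac{1}{2}i\,c(du) : \ker D^V_{\tcE_i}\to \ker D^V_{\tcE_i}.
\end{equation*}
The operator $c(du)$ is an isomorphism on the whole Clifford module and preserves $\ker D^V_{\tcE_i}$ (these are the $(u$-independent$)$ constant sections of each diagonal block, on which $c(du)$ acts by pointwise Clifford multiplication), so $D^H_{\tcE_i}$ is invertible at every boundary component.

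The main thing to check carefully is that $c(du)$ genuinely maps $\ker D^V_{\tcE_i}$ to itself, but this is immediate from \eqref{db.42}: $\db^V_{\tcE_i}$ acts by $i\pa_u$ on each diagonal block, so its kernel consists of $u$-constant sections, and $c(du)$ is a zeroth-order operator swapping the two summands of the Clifford module without introducing $u$-dependence. Applying Proposition~\ref{db.22} then yields that $D_{\cE}$, and hence $\db_{\cE}$, is Fredholm. Note that unlike the case of $D_E$ with no vanishing weights, one cannot conclude discreteness of the spectrum here, since $\db_{\cE}$ always has zero weights along the diagonal of $\End$.
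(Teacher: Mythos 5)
Your proof is correct and follows essentially the same route as the paper: the paper's proof of Proposition~\ref{db.47} likewise computes, as in Lemma~\ref{db.35}, that $D^{H}_{\tcE_{i}}=-\tfrac{i}{2}c(du)$ on $\ker D^{V}_{\tcE_{i}}$ and concludes invertibility, hence Fredholmness via Proposition~\ref{db.22}. Your additional remarks (that the diagonal blocks of $\End(\tE)$ force the vertical family to be non-invertible, and that flatness of $\tcE$ makes the curvature computation reduce to that of $S^{*}$) are accurate elaborations of steps the paper leaves implicit.
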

\begin{proof}
As in lemma~\ref{db.35}, one computes that the horizontal operator of $D_{\cE}$ at 
the point $p_{i}$ is given by
\begin{equation}
 D^{H}_{\tcE_{i}}= -\frac{i}{2} c(du): \ker D_{\tcE_{i}}^{V}\to 
\ker D^{V}_{\tcE_{i}}.
\label{db.48}\end{equation}
In particular, it is clearly invertible and the result follows from 
proposition~\ref{db.22}.
\end{proof}

To end this section, let us compute the eta invariants of the self-adjoint 
operators $\db^{V}_{\tE_{ij}}$ and $\db^{V}_{\hom(\tE_{il},\tE_{ij})}$.

\begin{lemma}
The eta invariants of $\db^{V}_{\tE_{ij}}$ and $\db^{V}_{\hom(\tE_{il},\tE_{ij})}$ defined in \eqref{db.30} and 
\eqref{db.45}
are given by
\begin{equation*}
\begin{gathered}
    \eta(\db^{V}_{\tE_{ij}})=\left\{
   \begin{array}{ll} k_{j}^{i}(1- 2\alpha_{j}^{i}), & \alpha_{j}^{i}>0, \\
             0, & \alpha_{j}^{i}=0,
  \end{array} \right.
 \\ 
\eta(\db^{V}_{\hom(\tE_{il},\tE_{ij})})= \left\{ 
\begin{array}{ll}
k_{j}^{i}k_{l}^{i}\sign(\alpha_{j}^{i}-\alpha_{l}^{i})(1- 2|\alpha_{j}^{i}-\alpha_{l}^{i}|), & 
j\ne l, \\
 0, & j=l, 
\end{array}\right.
\end{gathered}
\end{equation*}
where $\alpha^{i}_{j}:= \alpha_{j}(p_{i})$ and $k^{i}_{j}= k_{j}(p_{i})$.
\label{db.49}\end{lemma}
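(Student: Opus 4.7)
The plan is to compute the eta invariants directly from the definition using the explicit spectral decompositions already obtained in Lemmas~\ref{db.28} and~\ref{db.43}. Recall that for a self-adjoint operator with discrete spectrum $\{\lambda\}$, the eta function is defined by
\[
\eta_{D}(s) = \sum_{\lambda \neq 0} \sign(\lambda)|\lambda|^{-s}
\]
(with multiplicities, and for $\Re s$ large), then meromorphically continued, and $\eta(D) = \eta_{D}(0)$. The key ingredient for all the computations below will be the classical Hurwitz zeta function $\zeta(s,a) = \sum_{k \geq 0}(k+a)^{-s}$, valid for $0 < a \leq 1$, together with its well-known value $\zeta(0,a) = \tfrac{1}{2} - a$.

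For the operator $\db^{V}_{\tE_{ij}}$ in the case $\alpha_{j}^{i} > 0$, Lemma~\ref{db.28} gives the eigenvalues $2\pi(k+\alpha_{j}^{i})$ with multiplicity $k_{j}^{i}$, for $k \in \bbZ$. Splitting this into the subsums over $k \geq 0$ (positive eigenvalues) and $k \leq -1$ (negative eigenvalues, which can be reindexed by $m = -k \geq 1$), I would write
\[
\eta_{\db^{V}_{\tE_{ij}}}(s) = k_{j}^{i} (2\pi)^{-s}\bigl[\zeta(s,\alpha_{j}^{i}) - \zeta(s,1-\alpha_{j}^{i})\bigr].
\]
Evaluating at $s=0$ via $\zeta(0,a) = \tfrac{1}{2}-a$ immediately yields $k_{j}^{i}(1 - 2\alpha_{j}^{i})$. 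When $\alpha_{j}^{i} = 0$, the spectrum consists of $2\pi k$ for $k \in \bbZ$, which is symmetric about the origin with equal multiplicities on the two sides, so the regularized eta sum vanishes term by term and $\eta = 0$.

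The computation for $\db^{V}_{\hom(\tE_{il},\tE_{ij})}$ is the same, with the role of $\alpha_{j}^{i}$ replaced by $\beta := \alpha_{j}^{i} - \alpha_{l}^{i} \in (-1,1)$ and the multiplicity now $k_{j}^{i} k_{l}^{i}$, per Lemma~\ref{db.43}. When $j = l$ one has $\beta = 0$ and the symmetric-spectrum argument gives $\eta = 0$. When $j \neq l$, I would split into the two subcases: if $\beta > 0$, the same Hurwitz-zeta manipulation gives $k_{j}^{i}k_{l}^{i}(1 - 2\beta)$; if $\beta < 0$, writing $\beta = -|\beta|$ shifts which index set produces positive versus negative eigenvalues, and the computation instead yields
\[
k_{j}^{i}k_{l}^{i}\bigl[\zeta(0,1-|\beta|) - \zeta(0,|\beta|)\bigr] = -k_{j}^{i}k_{l}^{i}(1-2|\beta|).
\]
Both cases are unified by the factor $\sign(\alpha_{j}^{i} - \alpha_{l}^{i})(1 - 2|\alpha_{j}^{i} - \alpha_{l}^{i}|)$, which is precisely the claimed formula.

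The calculation is essentially a bookkeeping exercise once the spectrum is in hand; the only subtle point is checking that the naive splitting into Hurwitz zetas is valid, i.e. that each of the two series converges for $\Re s$ sufficiently large and that their difference admits an analytic continuation to $s = 0$. This is standard: since $\zeta(s,a)$ is meromorphic on $\bbC$ with its only pole at $s=1$ having residue $1$ independent of $a$, the difference $\zeta(s,a) - \zeta(s,1-a)$ is entire, so no regularization issues arise at $s=0$. I expect no genuine obstacle beyond a careful treatment of signs when passing between the two subcases $\beta > 0$ and $\beta < 0$.
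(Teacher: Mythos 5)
Your proposal is correct and follows essentially the same route as the paper: write the eta function from the explicit spectrum, split it into a difference of Hurwitz zeta functions, and evaluate at $s=0$ using $\zeta_{H}(0,\beta)=\tfrac12-\beta$. You in fact supply slightly more detail than the paper does (the explicit sign analysis for $\beta<0$ in the $\hom$ case, which the paper leaves to the reader, and the observation that the $a$-independent residue at $s=1$ makes the difference of Hurwitz zetas entire), but the method is identical.
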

\begin{proof}
The eta invariant of $\db^{V}_{\tE_{ij}}$ is the value at $s=0$ of the 
meromorphic extension of the eta functional
\begin{equation}
  \eta(\db^{V}_{E_{ij}},s):=  \sum_{\lambda \in \spec(\db^{V}_{E_{ij}})\setminus\{0\}} \frac{\lambda}{|\lambda|^{s+1}}, \quad \Re s>>1.
\label{db.50}\end{equation}
According to \eqref{db.31b} the spectrum of $\db^{V}_{E_{ij}}$ is symmetric
when $\alpha_{j}(p_{i})=0$, so in that case the eta invariant vanishes.
When $\alpha_{j}> 0$, we get instead for $\Re s \gg0$
\begin{equation}
\begin{aligned}
\eta(\db^{V}_{E_{ij}},s) &= \frac{k_{j}(p_{i})}{(2\pi)^{s}}
\sum_{k\in \bbZ}  \frac{k+ \alpha_{j}(p_{i})}{|k+ \alpha_{j}(p_{i})|^{s+1}}  \\
  &= \frac{k_{j}(p_{i})}{(2\pi)^{s}} \left(
      \sum_{k=0}^{\infty} \frac{1}{|k+\alpha_{j}^{i}|^{s}} - \sum_{k=1}^{\infty} 
       \frac{1}{|k-\alpha^{i}_{j}|^{s}} \right) \\
 &= \frac{k_{j}(p_{i})}{(2\pi)^{s}}( \zeta_{H}(s,\alpha_j(p_{i}))-
\zeta_{H}(s, 1-\alpha_{j}(p_{i})) )
\end{aligned}
\label{db.51}\end{equation}
where 
\begin{equation}
  \zeta_{H}(s,\beta)= \sum_{k=0}^{\infty} \frac{1}{|k+\beta|^{s}}, \quad
  \Re s > 1,
\label{db.52}\end{equation}
is the Hurwitz zeta function.  It admits an analytic continuation to 
$\bbC\setminus\{1\}$ and its value at $s=0$ is given by
\begin{equation}
  \zeta(0,\beta)= \frac{1}{2} -\beta, \quad \mbox{when} \; \beta>0.
\label{db.53}\end{equation}
Thus, this gives
\begin{equation}
\begin{aligned}
 \eta(\db^{V}_{E_{ij}})&= k_{j}(p_{i})( \zeta_{H}(0,\alpha_j(p_{i}))-
\zeta_{H}(0, 1-\alpha_{j}(p_{i})) )\\
&= k_{j}(p_{i})(1-2\alpha_{j}(p_{i})) 
\end{aligned}
\label{db.54}\end{equation}
as claimed.  For $\db^{V}_{\hom(E_{il},E_{ij})}$, the computation is 
very similar and relies on the knowledge of its spectrum 
described in \eqref{db.46}.  We leave the details to the reader.
\end{proof}

\section{Families of $\db$-operators over the moduli space}\label{ms.0}

For the moduli space $\cN$, a \textbf{universal parabolic stable vector bundle}
is a Hermitian vector bundle  $E\to \Sigma\times \cN$ such that 
for each $[\rho]\in \cN$, 
\begin{equation}
  \left. E\right|_{\Sigma\times\{[\rho]\}}\cong E_{\rho}
\label{if.1}\end{equation}
as Hermitian vector bundles.  From the point of view of representation
theory, the existence of a universal parabolic stable vector bundle is
equivalent to the existence of a smooth section
for the smooth principal $\PU(k)$-bundle
\begin{equation}
   \hom(\Gamma,\U(k))^{0}\to\hom(\Gamma,\U(k))^{0}/\U(k),
\label{if.2}\end{equation}  
where we recall $\hom(\Gamma,\U(k))^{0}$ is the space of irreducible admissible representations $\Gamma\to \U(k)$ for the prescribed weights and multiplicities with $\U(k)$ acting on this space by conjugation.  
According to (\cite{Boden-Yokogawa}, Proposition 3.2), a universal 
parabolic stable vector bundle exists for a generic weight system.  
From deformation theory, we know also that for $[\rho]\in \cN$, there 
exists a small neighborhood $\cU\subset \cN$ of $[\rho]$ such that
there exists a universal parabolic stable vector bundle on 
$\Sigma\times \cU$.  
From now on, we will assume either that the moduli space $\cN$ admits a universal
parabolic vector bundle $E\to \Sigma\times \cN$ or else, that we restrict $\cN$ to
an open set $\cU$ admitting a universal parabolic stable vector bundle.  Let 
\begin{equation}
  \sigma: \cN \to \hom( \Gamma, U(k))^{0}
\label{if.3}\end{equation}
be a smooth section and consider the induced universal parabolic vector bundle
$E\to \Sigma\times \cN$ such that 
\begin{equation}
 \left. E\right|_{\Sigma\times \{[\rho]\}}= E_{\sigma([\rho])}, \quad
[\rho]\in \cN.
\label{if.4}\end{equation}
On each fibre $\pi^{-1}([\rho])$ of the holomorphic fibration 
$\pi: \Sigma\times \cN\to \cN$, consider the operators
$\db_{E_{\sigma([\rho])}}$ and $\db_{\cE_{\sigma([\rho])}}$.  They
combine to give families of  $\db$ $\hc$-operators
\begin{equation}
\db_{E}\in \Psi^{1}_{\hc}(\Sigma\times \cN/\cN; E, {}^{\hc}
\Lambda^{0,1}_{\Sigma}\otimes E) ,\quad 
\db_{\cE}\in \Psi^{1}_{\hc}(\Sigma\times \cN/\cN; \cE, {}^{\hc}
\Lambda^{0,1}_{\Sigma}\otimes \cE)
\label{if.6}\end{equation} 
parametrized by $\cN$.  Since the fibration $\pi$ is trivial, it has 
a natural choice of connection, namely the trivial one.  Since $E$ is a Hermitian 
vector bundle, it has also a natural connection, namely its Chern connection.  
To describe it, we will first discuss the
equivalent of the Bers coordinates for the moduli space $\cN$ as introduced in \cite{TZ2}
(see also \cite{TZ89} for similar coordinates on the moduli space of stable vector bundles on
a compact Riemann surface).  

Recall first that the holomorphic tangent space $T_{[\rho]}\cN$ is naturally identified with the
space $\cH^{0,1}(\Sigma, \End(E_{\rho}))$ of square integrable harmonic $(0,1)$-forms on $\Sigma$
with value in $\End(E_{\rho})$.
From the point of view of index theory, this is intuitively clear.  An infinitesimal deformation
of the holomorphic structure of $E_{\rho}$ corresponds to an infinitesimal deformation of the 
operator $\db_{E_{\rho}}$, which amounts to adding an infinitesimal $(0,1)$-form 
$\nu\in \dot{\mathcal{C}}^{\infty}(\Sigma; \Lambda^{0,1}\Sigma\otimes \End(E_{\rho}))$,
\begin{equation}
   \db_{E_{\rho}} \to \db_{E_{\rho}}+ \nu,
\label{con.1}\end{equation}
where the symbol $\dot{\mathcal{C}}^{\infty}$ means we consider smooth sections with rapid 
decay as one approaches a puncture.
Notice however that if $\nu$ is in the image of $\db_{\cE_{\rho}}$, say $\nu= \db_{\cE_{\rho}} \mu$,
then $\nu$ is obtained from the infinitesimal reparametrization of $E$ given by
\begin{equation}
   \Id_{E_{\rho}}+ \mu: E_{\rho}\to E_{\rho}, \quad 
    \db_{E_{\rho}}+\nu= (\Id_{E_{\rho}}-\mu)\db_{E_{\rho}}(\Id_{E}+\mu)= \db_{E} +\db_{\cE_{\rho}}\mu.
\label{con.2}\end{equation}
In this case, the deformation $\nu$ leads to the same holomorphic structure up
to biholomorphism.  To get deformations leading to new holomorphic structures up to biholomorphism,
we need to mod out by the image of $\db_{\cE_{\rho}}$.  This means we can identify $T_{[\rho]}\cN$
with
\begin{equation}
 \coker \db_{\cE_{\rho}}\cong \cH^{0,1}(\Sigma;\End(E_{\rho})).
\label{con.3}\end{equation}   
The natural non-degenerate pairing
\begin{equation}
  \begin{array}{ccc}
  \cH^{0,1}(\Sigma;\End(E_{\rho}))\otimes \cH^{1,0}(\Sigma;\End(E_{\rho}))& \to & \bbC \\
     (\nu,\theta) & \mapsto & \int_{\Sigma} \tr( \nu\wedge \theta) 
     \end{array} 
\label{con.4}\end{equation}
allows one to identify $T^{*}_{[\rho]}\cN$ with the space $\cH^{1,0}(\Sigma;\End(E_{\rho}))$
of square integrable harmonic $(1,0)$-forms on $\Sigma$ with values in $\End(E_{\rho})$. 

Let $\rho: \Gamma\to U(k)$ be an admissible representation and suppose that 
$\rho= \sigma([\rho])$ where $\sigma$ is the smooth section \eqref{if.3} defining
the universal stable parabolic vector bundle $E\to \Sigma\times \cN$.  As pointed out
in \cite{TZ2}, for each $\nu\in \cH^{0,1}(\Sigma; \End(E_{\rho}))$ small enough, there exists
a unique map $f^{\nu}: \bbH\to \GL(k,\bbC)$ such that
\begin{enumerate}
\item[(i)] $\frac{\pa f^{\nu}}{\pa \overline{z}} (z)= f^{\nu}(z) \nu(z), \quad z\in \bbH$,
              where we also write $\nu$ for the lift of $\nu$ to the universal cover $\bbH$ of $\Sigma$;
\item[(ii)] $\rho^{\nu}(\gamma):= f^{\nu}(\gamma z) \rho(\gamma) f^{\nu}(z)^{-1}$ is independent
of $z\in \bbH$ and is an admissible irreducible unitary representation of $\Gamma$ with
  $\rho^{\nu}= \sigma([\rho^{\nu}])$.
\item[(iii)] $f^{\nu}$ is regular at the cusps, that is,
\[
        f^{\nu}(x_{i})= \lim_{z\to \infty} f^{\nu}(\sigma_{i}z) \in\bbC,\ i=1,\ldots,n.
\] 
\item[(iv)] $\det( f^{\nu}(z_{0}) )=1$ at $z_{0}=\sqrt{-1}\in \bbH$.   
\item[(v)] For $\epsilon\in [0,1)$, there are solutions $f^{\epsilon\nu}$ to (i),(ii),(iii) and
(iv) with $\nu$ replaced with $\epsilon \nu$ in such a way that $f^{0}=\Id$ is the identity
section and 
\[
         [0,1]\ni \epsilon\mapsto f^{\epsilon\nu}(z_{0})\in \GL(k,\bbC)
\] 
is a continuous map.
\end{enumerate}
For a fixed $\nu$, it not hard to see that there is at most one isomorphism class
$[\rho^{\nu}]\in \cN$ of irreducible admissible representations such that these properties
are satisfied by some $f^{\nu}$.  Since we require that $\rho^{\nu}=\sigma([\rho^{\nu}])$, one 
can also check that requirements (i)-(iii) determine the solution $f^{\nu}$ up to multiplication by an element
of $U(1)\subset U(k)$.  Requirement (iv) further forces
the solution to be unique up to multiplication by an element of $\bbZ_{k}\subset U(1)\subset U(k)$.  Since $\bbZ_{k}$ is a discrete subgroup of $U(k)$, the last requirement chooses canonically a unique solution among these $k$ possible solutions.

These solutions allow us to introduce complex coordinates near $[\rho]$ in $\cN$.  Precisely,
if $\nu_{1}, \ldots, \nu_{d}$ form a basis of $\cH^{0,1}(\Sigma,E_{\sigma([\rho])})$, then we get complex
coordinates
\begin{equation}
   \bbC^{d}\ni (\varepsilon_{1},\ldots,\varepsilon_{d})\mapsto [\rho^{\nu}]\in \cN,
\label{con.4b}\end{equation}
where $\nu= \varepsilon_{1}\nu_{1}+\cdots+ \varepsilon_{d}\nu_{d}$.  As mentioned
in \cite{TZ2}, the complex coordinates introduced in this way at two different points
$[\rho_{1}], [\rho_{2}]\in \cN$ transform holomorphically on overlaps, which induces on $\cN$
a complex structure.  

Now, for each $[\rho]\in \cN$, we can introduce these coordinates.  This allows us to
define a canonical connection on the universal stable parabolic vector bundle 
$E\to \Sigma\times \cN$.  On each fibre
\begin{equation}
  \left. E\right|_{\Sigma\times\{[\rho]\}}= E_{\sigma([\rho])},
\label{con.5}\end{equation}
the canonical connection restricts to the flat connection on $E_{\sigma([\rho])}$.  In the horizontal
direction at $(z,[\rho])\in \Sigma\times \cN$, we use the complex coordinates 
$\nu\in \cH^{0,1}(\Sigma,\End(E_{\rho}))$ introduced above and define
\begin{equation}
\begin{gathered}
  \nabla_{\nu}e (z,[\rho]):=  \left. \frac{\pa}{\pa \varepsilon}
       (f^{\varepsilon\nu})^{-1}e(z,[\rho^{\varepsilon\nu}])  \right|_{\varepsilon=0}, \\
       \nabla_{\overline{\nu}}e (z,[\rho]):=  \left. \frac{\pa}{\pa \overline{\varepsilon}}
       (f^{\varepsilon\nu})^{-1}e(z,[\rho^{\varepsilon\nu}])  \right|_{\varepsilon=0}.
\end{gathered}
\label{con.6}\end{equation}
These combine to give a canonical connection $\nabla^{E}$ for the universal stable
parabolic vector bundle $E$.  One can also check that together with the family $\db_{E}$, 
\eqref{con.6} induces a holomorphic structure on $E$.  This connection and holomorphic structure
also naturally induce a connection
$\nabla^{\cE}$ and a holomorphic structure on the endomorphism bundle $\cE=\End(E)$.

\begin{lemma}
The connections $\nabla^{E}$ and $\nabla^{\cE}$ are the respective Chern connections of the Hermitian bundles
$E\to \Sigma\times \cN$ and $\cE\to \Sigma\times \cN$.
\label{Chern.1}\end{lemma}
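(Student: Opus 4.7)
The statement for $\nabla^{\cE}$ follows from that for $\nabla^E$: the Hermitian metric, holomorphic structure, and connection on $\cE = \End(E)$ are all induced from their counterparts on $E$ by the standard construction on the endomorphism bundle. So I focus on proving $\nabla^E$ is the Chern connection.

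Recall the Chern connection is the unique connection on a Hermitian holomorphic vector bundle which is unitary with respect to the metric and whose $(0,1)$-part equals the Dolbeault operator $\bar\pa$. I verify these two properties for $\nabla^E$ by splitting tangent vectors into vertical (along $\Sigma$) and horizontal (along $\cN$) parts. Vertically, $\nabla^E$ restricts to the flat connection on each fibre $E_{\sigma([\rho])}$ induced by the unitary representation $\sigma([\rho])$; this is manifestly unitary (since $\sigma([\rho]) \in \U(k)$) and its $(0,1)$-part is $\db_{E_{\sigma([\rho])}}$. So vertically, $\nabla^E$ is the Chern connection of each fibre. Compatibility with the holomorphic structure in the horizontal direction is then automatic: as stated before the lemma, the holomorphic structure on the total space $E$ is by construction the one whose $\bar\pa$-operator is $\db_E$ (vertical) together with the horizontal $(0,1)$-part $\nabla_{\bar\nu}$ from \eqref{con.6}, so $(\nabla^E)^{0,1} = \bar\pa_E$ tautologically.

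The remaining task, and the main content of the proof, is horizontal compatibility with the Hermitian metric. The plan is a direct calculation. Lift $e_1, e_2$ over a chart around $[\rho]$ to $\bbC^k$-valued functions $\tilde e_i(z, \varepsilon)$ on $\bbH$, equivariant under $\sigma([\rho^{\varepsilon\nu}])$, so that the metric reads $h_{\wt E}(e_1, e_2)(z, \varepsilon) = \langle \tilde e_1, \tilde e_2 \rangle_{\bbC^k}$ pointwise. Writing $\dot f_\nu := \tfrac{\pa}{\pa\varepsilon} f^{\varepsilon\nu}|_{\varepsilon = 0}$ and $\dot f_{\bar\nu} := \tfrac{\pa}{\pa \bar\varepsilon} f^{\varepsilon\nu}|_{\varepsilon = 0}$, formula \eqref{con.6} yields
\begin{equation*}
  \tfrac{\pa}{\pa\varepsilon} \tilde e_i \big|_{\varepsilon = 0} = \nabla_\nu e_i + \dot f_\nu \tilde e_i, \qquad \tfrac{\pa}{\pa\bar\varepsilon} \tilde e_i \big|_{\varepsilon = 0} = \nabla_{\bar\nu} e_i + \dot f_{\bar\nu} \tilde e_i.
\end{equation*}
Differentiating $h_{\wt E}(e_1, e_2)$ in $\varepsilon$ and matching with $h_{\wt E}(\nabla_\nu e_1, e_2) + h_{\wt E}(e_1, \nabla_{\bar\nu} e_2)$, the horizontal metric compatibility reduces to the pointwise matrix identity $\dot f_\nu + (\dot f_{\bar\nu})^* = 0$, or equivalently $\tfrac{\pa}{\pa\varepsilon} M^{\varepsilon\nu}|_{\varepsilon = 0} = 0$ for the positive Hermitian matrix $M^{\varepsilon\nu} := (f^{\varepsilon\nu})^* f^{\varepsilon\nu}$.

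The main obstacle is establishing this final identity, which is exactly where the specific conditions on the Takhtajan--Zograf gauge $f^{\varepsilon\nu}$ enter. Condition (ii), the unitarity of $\rho^{\varepsilon\nu}$ for all $\varepsilon$, forces the equivariance $M^{\varepsilon\nu}(\gamma z) = \rho(\gamma) M^{\varepsilon\nu}(z) \rho(\gamma)^{-1}$, so that $M^{\varepsilon\nu}$ descends to a positive Hermitian section of $\End(\overline E_\rho)$ (regular at the cusps by (iii)). Combining this equivariance with the normalization $\det f^{\varepsilon\nu}(z_0) = 1$ from (iv) and using the irreducibility of $\rho$ (which forces holomorphic intertwiners of $\overline E_\rho$ with itself to be scalars) pins down $\tfrac{\pa}{\pa\varepsilon} M^{\varepsilon\nu}|_{\varepsilon = 0}$ and gives the desired vanishing.
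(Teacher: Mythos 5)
Your reduction is the same one the paper uses: after passing to $\tilde e_i=(f^{\varepsilon\nu})^{-1}e_i$, horizontal metric compatibility is equivalent to the vanishing of $\Phi_\nu:=\pa_\varepsilon\bigl((f^{\varepsilon\nu})^\dag f^{\varepsilon\nu}\bigr)\big|_{\varepsilon=0}$, and your handling of the vertical direction, of the $(0,1)$-part, and of the deduction of the $\cE$ case from the $E$ case (the paper phrases it as $\Ad\Phi_\nu=0$) is fine. The problem is the last paragraph, which is where the actual content lives. Equivariance under $\rho$ only tells you that $\Phi_\nu$ descends to a \emph{smooth} section of $\End(E_{\sigma([\rho])})$. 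Irreducibility (equivalently, stability) forces an endomorphism to be a scalar multiple of the identity only when that endomorphism is \emph{holomorphic}, i.e.\ lies in $\ker\db_{\cE_{\sigma([\rho])}}$; a general smooth equivariant positive Hermitian section is of course not scalar. Neither the equivariance from (ii) nor the normalization (iv) gives you holomorphy, so the step "pins down $\pa_\varepsilon M^{\varepsilon\nu}|_{\varepsilon=0}$" is exactly the step you have not done.

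Concretely, what is missing is the verification that $\pa_{\bar z}\Phi_\nu=0$. This is not automatic: writing $\Phi_\nu=\pa_\varepsilon f^{\varepsilon\nu}|_0+(\pa_{\bar\varepsilon}f^{\varepsilon\nu}|_0)^\dag$, condition (i) gives $\pa_{\bar z}\bigl(\pa_\varepsilon f^{\varepsilon\nu}|_0\bigr)=\pa_\varepsilon(\varepsilon f^{\varepsilon\nu}\nu)|_0=\nu\neq 0$, so neither summand is holomorphic by itself and the holomorphy of $\Phi_\nu$ rests on a cancellation between the two terms, which has to be checked using the defining properties of $f^{\varepsilon\nu}$. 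The paper records precisely this step ("a simple computation shows that $\pa_{\bar z}\Phi_\nu=0$, so that $\Phi_\nu\in\ker\db_{\cE_{\sigma([\rho])}}$"), after which stability gives $\Phi_\nu=c\,\Id$ and the normalization $\det f^{\varepsilon\nu}(z_0)=1$ forces $c=0$ --- the two ingredients you do invoke correctly. Supply the holomorphy computation and your argument closes.
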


\begin{proof}
We will proceed as in the proof of lemma 1 in \cite{TZ89}.  Let 
$e_{1}(z,[\rho^{\nu}])$ and $e_{2}(z,[\rho^{\nu}])$ be local sections of $E$ near $(z_{0},[\rho])\in \Sigma\times\cN$ and 
set
\begin{equation}
\tilde{e}_{i}(z,[\rho^{\nu}]):= (f^{\nu})^{-1} e_{i}(z,[\rho^{\nu}]).
\label{Chern.2}\end{equation}
From 
\begin{equation}
\langle e_{1}(z,[\rho^{\nu}]), e_{2}(z,[\rho^{\nu}])\rangle_{E_{(z,[\rho^{\nu}])}}=
\langle \overline{f^{\nu}}^{\dag}f^{\nu}\tilde{e}_{1}(z,[\rho^{\nu}]), \tilde{e}_{2}(z,[\rho^{\nu}])\rangle_{E_{(z,[\rho])}},
\label{Chern.3}\end{equation}

we see that $\nabla^{E}$ will be the Chern connection of $E$ provided 
\begin{equation}
\Phi_{\nu}:= \frac{\pa }{\pa \varepsilon} \left. \overline{f^{\varepsilon\nu}}^{\dag} f^{\varepsilon \nu} \right|_{\varepsilon=0}=0
\label{Chern.4}\end{equation}
for each $[\rho]\in \cN$ and $\nu\in \cH^{0,1}(\Sigma; \End(E_{\sigma([\rho])})$.  From property (ii) of the definition of
$f^{\varepsilon \nu}$, we see that $\Phi_{\nu}\in \Omega^{0}(\Sigma,\End(E_{\sigma([\rho])}))$.
In fact, a simple computation shows that $\pa_{\overline{z}}\Phi_{\nu}=0$, so that 
$\Phi_{\nu}\in \ker \db_{\cE_{\sigma([\rho])}}$.  Since $E_{\sigma([\rho]}$ is stable, this
means that $\Phi_{\nu}$ is a multiple of the identity, and from the normalization condition
(iv) of the definition of $f^{\varepsilon \nu}$, we conclude that $\Phi_{\nu}=0$.

Similarly, the connection $\nabla^{\cE}$ will be the Chern connection of $\cE$ provided $\Ad \Phi_{\nu}=0$
for all $[\rho]\in \cN$ and $\nu\in \cH^{0,1}(\Sigma,\End(E_{\sigma([\rho])}))$, which follows immediately from
\eqref{Chern.4}.
\end{proof}

Since the fibration $\Sigma\times \cN\to \cN$ is trivial, we can choose to put on it the
trivial connection.  with this choice and the connection $\nabla^{E}$, we can then take the 
covariant derivatives for a family of operators $A\in \Psi^{*}(\Sigma\times \cN/\cN;E)$ with
respect to horizontal directions on $\cN$.  For the family of operators $\db_{E}$, one can
compute explicitly that
\begin{equation}
\begin{array}{ll} 
  \nabla_{\nu}\db_{E}= \nu, & \quad \nabla_{\overline{\nu}}\db_{E}=0, \\
  \nabla_{\nu}\db^{*}_{E}=0, & \nabla_{\overline{\nu}}\db_{E}^{*}= -*\nu * 
 \end{array}
\label{con.7}\end{equation} 
at $[\rho]\in \cN$, where $\nu\in \cH^{0,1}(\Sigma,\End(E_{\sigma([\rho])}))$.
Similarly, for the family of operators $\db_{\cE}$, one computes (\cf p.127 in \cite{TZ2} )
\begin{equation}
\begin{array}{ll} 
  \nabla_{\nu}\db_{\cE}= \ad\nu, & \quad \nabla_{\overline{\nu}}\db_{\cE}=0, \\
  \nabla_{\nu}\db^{*}_{\cE}=0, & \nabla_{\overline{\nu}}\db^{*}_{\cE}= -*\ad(*\nu) 
 \end{array}
\label{con.8}\end{equation} 
at $[\rho]\in \cN$, where $\nu\in \cH^{0,1}(\Sigma,\End(E_{\sigma([\rho])}))$.

The canonical connection $\nabla^{E}$ also naturally extends to define a connection on 
$\tE\to \tSigma\times \cN$.  When we consider $\db_{E}$ and $\db_{\cE}$ as families
of $\hc$-operators, this leads to the following useful fact.

\begin{lemma}
The vertical families $D^{V}_{\tE_{i}}$ and $D^{V}_{\tcE_{i}}$ are parallel with respect to the
induced connection,
\[
   [\nabla^{\tE_{i}\oplus \tE_{i}},D^{V}_{\tE_{i}}]=0, \quad  [\nabla^{\tcE_{i}\oplus \tcE_{i}},D^{V}_{\tcE_{i}}]=0.
\]
\label{con.9}\end{lemma}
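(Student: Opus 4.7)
The claim is that $[\nabla^{\tE_i \oplus \tE_i}_X, D^V_{\tE_i}]=0$ and $[\nabla^{\tcE_i\oplus \tcE_i}_X, D^V_{\tcE_i}]=0$ for every horizontal vector $X$ on $\cN$. In the product $\pa\tSigma_{p_i}\times \cN$ the factor bundles $\underline{\bbC}$ and ${}^{\hc}\Lambda^{0,1}_{\tSigma}$ do not vary with $[\rho]$, so the problem reduces to showing that the horizontal connection on $\tE_i$ commutes with the operator $\pa/\pa u$ appearing in \eqref{db.27}.

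My approach is as follows. For $\nu\in \cH^{0,1}(\Sigma;\End(E_{\sigma([\rho])}))$, equation \eqref{con.6} gives
\[
\nabla_\nu^E e(z,[\rho])=\left.\frac{\pa}{\pa\varepsilon}\right|_{\varepsilon=0}(f^{\varepsilon\nu})^{-1}e(z,[\rho^{\varepsilon\nu}]),
\]
so the horizontal connection $1$-form on $\tE$ near the cusp $p_i$ is the endomorphism-valued function $-\left.\pa_{\varepsilon}f^{\varepsilon\nu}\right|_{\varepsilon=0}$. Property (iii) of $f^{\varepsilon\nu}$ asserts that this function has a finite limit as $z\to x_i$, so its pullback to $\pa\tSigma_{p_i}=\beta^{-1}(p_i)$ is a constant matrix $B_\nu^{(i)}\in \End(\bbC^k)$ with no dependence on the boundary coordinate $u=-\theta$. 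In the $\bbZ$-equivariant trivialization of \eqref{db.23} this gives
\[
\nabla^{\tE_i}_\nu=\pa_\nu+B_\nu^{(i)},\qquad \pa_u B_\nu^{(i)}=0,
\]
where $\pa_\nu$ denotes the horizontal coordinate derivative in the $\nu$-direction. Since $\pa/\pa u$ and $\pa_\nu$ commute as vector fields on the product $\pa\tSigma_{p_i}\times \cN$ and $B_\nu^{(i)}$ is constant in $u$, we get $[\pa/\pa u,\nabla^{\tE_i}_\nu]=0$; multiplying by the $\cN$-independent Clifford element $c(du)$ from \eqref{db.27} gives the desired $[\nabla^{\tE_i\oplus\tE_i}_\nu, D^V_{\tE_i}]=0$. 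The case of $\tcE_i$ is identical, since the horizontal $1$-form of the induced connection $\nabla^\cE$ is $\ad(B_\nu^{(i)})$, again $u$-independent on the boundary.

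The only delicate point I anticipate is justifying rigorously that $f^{\varepsilon\nu}|_{\pa\tSigma_{p_i}}$ really is a constant matrix-valued function of $u$: one must unwind the blow-up $\beta:\tSigma\to\cSigma$, the conjugation by $\sigma_i\in \SL(2,\bbR)$ bringing the cusp to $\infty$, and the $\Gamma$-quotient $\tSigma=\Gamma\setminus\tbbH$ to see that the regularity condition (iii) precisely matches the statement that $f^{\varepsilon\nu}$ descends to the boundary circle as a section constant along its circular fibre. Once this identification is made, the remaining commutator computation is immediate.
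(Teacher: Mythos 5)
Your strategy---compute the horizontal connection $1$-form of $\nabla^{E}$ in the equivariant trivialization near the cusp and show that its boundary restriction is independent of $u$---is genuinely different from the paper's argument and could be made to work, but as written it has a gap at exactly the step you flag, and that step is the entire analytic content of the lemma. Property (iii) is a statement about $f^{\varepsilon\nu}$ for each \emph{fixed} $\varepsilon$: it provides a finite limit $\lim_{z\to\infty}f^{\varepsilon\nu}(\sigma_i z)$ and nothing more. From this alone you cannot conclude (a) that the $\varepsilon$-derivative $\dot f^{\nu}:=\pa_\varepsilon f^{\varepsilon\nu}|_{\varepsilon=0}$, which is what actually appears in the connection form, also has a limit at the cusp (interchanging $\lim_{z\to\infty}$ with $\pa_\varepsilon$ requires uniformity in $\varepsilon$), nor (b) that the convergence is strong enough for $\dot f^{\nu}$ to restrict \emph{smoothly} to the boundary face of the blow-up $\tSigma$ as a constant matrix: a bare pointwise limit at the puncture $p_i\in\cSigma$ does not control the asymptotics in the polar coordinates \eqref{db.6} that the $\hc$-formalism needs. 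So the constancy of $B^{(i)}_\nu$ in $u$, on which your whole commutator computation rests, is asserted rather than proved.

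The missing input is precisely what the paper's proof isolates: every $\nu\in\cH^{0,1}(\Sigma,\End(E_\rho))$ is of \emph{rapid decay} at the cusps. The paper proves this from the Fourier expansion \eqref{con.10} of the holomorphic form $*\nu$ near $p_i$, where square-integrability forces the non-decaying constant terms to vanish; the lemma is then immediate from the variation formulas \eqref{con.7}--\eqref{con.8}, since $[\nabla_X,D^V_{\tE_i}]$ is the boundary restriction of $\rho_\Sigma$ times a term that vanishes to infinite order. The same fact is what would close the gap in your route: differentiating condition (i) at $\varepsilon=0$ gives $\pa_{\bar z}\dot f^{\nu}=\nu$, so rapid decay of $\nu$ makes $\dot f^{\nu}$ asymptotically holomorphic and bounded near the cusp, whence (using the $S_i$-equivariance and a removable-singularity argument in the coordinate $\zeta_i$) its boundary restriction is a constant matrix up to rapidly decaying corrections. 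With that estimate added, your connection-form argument goes through and is a reasonable alternative to the paper's; without it, the proof is incomplete.
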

\begin{proof}
It suffices to show that any form $\nu$ in $\cH^{0,1}(\Sigma,\End(E_{\rho}))$ is necessarily 
of rapid decay as one approaches a cusp (in the coordinates \eqref{db.6}), for then the
results easily follows from \eqref{con.7} and \eqref{con.8}. 

To see that the forms in $\cH^{0,1}(\Sigma,\End(E_{\rho}))$ have the claimed behavior,
notice first that if $\nu$ is in $\cH^{0,1}(\Sigma,\End(E_{\rho}))$, then
$*\nu$ is in $\cH^{1,0}(\Sigma,\End(E_{\rho}))$.  In the complex coordinate $\zeta_{i}=e^{2\pi i \sigma^{-1}_{i}z}$ with $\zeta_{i}(p_{i})=0$ near a puncture $p_{i}\in \cSigma$, the holomorphic form
$*\nu$ has the form
\begin{equation}
  *\nu(z) = \sum_{l,m=1}^{r_{i}}\sum_{k=0}^{\infty} a^{ilm}_{k}e^{2\pi i (\alpha_{l}(p_i)-\alpha_{m}(p_i))z}e^{2\pi i k z} dz
\label{con.10}\end{equation}
where $a^{ilm}_{k}\in W_{il}\otimes W^{*}_{im}$.  To insure that $*\nu$ is square integrable, we need that $a^{ilm}_{0}=0$ when $\alpha_{l}(p_{i})-\alpha_{m}(p_{i})\le 0$.  In particular, this implies that in the 
coordinates \eqref{db.6}, both $*\nu$ and $\nu=-*(*\nu)$  are of rapid decay
as one approaches a puncture.  
\end{proof}

\section{The index formul\ae}\label{if.0}
To compute the index formul\ae{}  of the families of operators $\db_E$ and $\db_{\cE}$ in \eqref{if.6},  we will use the results
of \cite{Albin-Rochon1}.  The main step consists in computing the 
eta forms of the vertical families of these operators.

For $i\in\{1,\ldots,n\}$ and $j\in\{1,\ldots,k_{i}\}$, let 
$E_{ij}\to \cN$ be the Hermitian vector bundle of rank $k_{j}(p_{i})$ 
with fibre at $[\rho]\in \cN$ given by the eigenspace corresponding to
the eigenvalue $e^{2\pi i\alpha_{j}(p_{i})}$ of 
$\sigma_{[\rho]}(S_{i})$.  For each $i$, consider the Hermitian vector bundle  obtained
by taking the direct sum
\begin{equation}
  E_{i}:= \bigoplus_{j=1}^{r_{i}} E_{ij}.
\label{if.5}\end{equation}
The bundle $E_{i}$ is clearly topologically trivial, but it is not necessarily trivial as a Hermitian vector bundle.
By lemma~\ref{con.9}, the Chern connection $\nabla^{E}$ induces a connection on each of the bundles $E_{ij}$.  This is
just the Chern connection $\nabla^{E_{ij}}$ of $E_{ij}$.   There is also an induced connection on $E_{i}$, namely its
Chern connection given by
\begin{equation}
      \nabla^{E_{i}} = \bigoplus_{j=1}^{r_{i}} \nabla^{E_{ij}}
\label{direct}\end{equation}      
\begin{lemma}
The (renormalized) eta form of the vertical family $\db_{E}^{V}$ is given by
\[
 \hat{\eta}(\db_{E}^{V})= \sum_{i=1}^{n} \sum_{\alpha_{j}(p_{i})>0}
    \lrpar{ \frac{1}{2}-\alpha_{j}(p_{i}) } \Ch(E_{ij}),
\]
where $\Ch(E_{ij}):= \Tr(\exp( \frac{i(\nabla^{E_{ij}})^{2}}{2\pi}))$ is a form representing the 
Chern character of $E_{ij}$.  
\label{if.9}\end{lemma}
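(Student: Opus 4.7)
The strategy is to use the fact that the vertical family is parallel (Lemma~\ref{con.9}) to reduce the computation of the renormalized eta form to the scalar spectral data already worked out in Lemma~\ref{db.49}, weighted by the Chern characters of the eigenbundles $E_{ij}$.

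First I would localize at each puncture $p_i \in S$. By Lemma~\ref{con.9} the vertical operator $\db^V_{\tE_i}$ commutes with the induced connection on $\tE_i$, so the $\rho(S_i)$-eigenbundle decomposition $\tE_i = \bigoplus_j \tE_{ij}$ of \eqref{db.30}--\eqref{db.31} is parallel over $\cN$. Since the renormalized eta form is additive over orthogonal parallel decompositions of the coefficient bundle, it suffices to compute $\hat\eta(\db^V_{\tE_{ij}})$ for each pair $(i,j)$. The spectral sections $e_{kl}^{ij} = e^{-2\pi i(k+\alpha_j^i)u} w_l^i$ from \eqref{db.31b} span, for each fixed $k \in \bbZ$, a copy of the bundle $E_{ij}$ over $\cN$, and this copy is preserved by the Chern connection $\nabla^{E_{ij}}$ induced from $\nabla^E$.

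Next, I would substitute this parallel decomposition into the superconnection formulation of the renormalized eta form of \cite{Albin-Rochon1}. Writing $\bbA_t = t\db^V_{\tE_{ij}} + \nabla^{\tE_{ij}}$, the commuting relation $[\nabla^{\tE_{ij}}, \db^V_{\tE_{ij}}] = 0$ yields $\bbA_t^2 = t^2(\db^V_{\tE_{ij}})^2 + (\nabla^{\tE_{ij}})^2$, so the heat kernel factorizes. Computing the fiberwise supertrace over the circle $\partial\tSigma_{p_i}$ picks out one copy of $\Ch(E_{ij})$ from the Chern--Weil representative $\operatorname{tr}\exp(i(\nabla^{E_{ij}})^2/2\pi)$ for each $k \in \bbZ$, leading formally to
\begin{equation*}
\hat\eta(\db^V_{\tE_{ij}}) = \lrpar{ \frac{1}{2} \sum_{k \in \bbZ} \sign(\alpha_j^i + k) }_{\mathrm{reg}} \Ch(E_{ij}),
\end{equation*}
where the regularization is the one prescribed in \cite{Albin-Rochon1}.

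Finally, I would identify this regularized scalar sum with the Hurwitz zeta computation used in the proof of Lemma~\ref{db.49}: for $\alpha_j^i > 0$ one has $\sum_k \sign(\alpha_j^i + k) = \zeta_H(0,\alpha_j^i) - \zeta_H(0, 1-\alpha_j^i) = 1 - 2\alpha_j^i$, while for $\alpha_j^i = 0$ the spectrum of $\db^V_{\tE_{ij}}$ is symmetric so the sum vanishes. Multiplying by $\frac{1}{2}$ produces the coefficient $\frac{1}{2} - \alpha_j^i$, and summing over $i$ together with those $j$ having $\alpha_j^i > 0$ yields the claimed formula. I expect the principal obstacle to be justifying rigorously that the renormalization of the $t$-integral provided by \cite{Albin-Rochon1} coincides with this zeta regularization of the fiberwise spectral sum; because the parallel structure completely decouples the fiber and base computations this reduces to the scalar case already handled in Lemma~\ref{db.49}, but the matching of the two renormalization prescriptions must be checked with care.
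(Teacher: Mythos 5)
Your proposal is correct and follows essentially the same route as the paper: both use Lemma~\ref{con.9} to make the eigenbundle decomposition $\tE_i=\bigoplus_j\tE_{ij}$ parallel, observe that the triviality of the boundary fibration kills the higher superconnection terms so that the heat kernel of $\bbB_t^2=t(\db^V)^2+\Omega$ factorizes, and thereby reduce the eta form to (one half of) the scalar eta invariants of Lemma~\ref{db.49} multiplied by $\Ch(E_{ij})$. The "matching of regularizations" you flag as the main obstacle is handled in the paper simply by noting that the degree-zero part of the Bismut--Cheeger eta form equals half the eta invariant already computed via the Hurwitz zeta function, so nothing further needs to be checked.
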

\begin{proof}
Recall from \cite{BCO} (we will use the notation of 
\cite{Melrose-Piazza1}) that the eta form of the self-adjoint
family $\db_{E_{i}}^{V}$ is given by
\begin{equation}
  \eta(\db^{V}_{E_{i}})= \frac{1}{\sqrt{\pi}} \int_{0}^{\infty} 
 \STr_{\Cl(1)}\left(\frac{d\bbB_{t}}{dt} e^{-\bbB^{2}_{t}}    \right)dt
\label{if.10}\end{equation}
where $\bbB_{t}= t^{\frac{1}{2}}\bbB_{[0]}+ \bbB_{[1]}+ t^{-\frac{1}{2}}\bbB_{[2]}$ is the rescaled Bismut superconnection of the family 
$\db^{V}_{E_{i}}$.  It is given by 
\begin{equation}
\begin{gathered}
\bbB_{[0]}= \sigma \db^{V}_{E_{i}}, \\
\bbB_{[1]}= \sum_{\alpha} \varepsilon(f^{\alpha}) ( \nabla^{\tE_{i}\oplus \tE_{i}}+ \frac{1}{2} k^{\pa\tilde\pi}(f_{\alpha})), \\
\bbB_{[2]}= \sigma\frac{1}{4} \sum_{\alpha<\beta}
\varepsilon(f^{\alpha})\varepsilon(f^{\beta}) c(g_{i}) \Omega^{\pa\tilde{\pi}}(f_{\alpha},f_{\beta})(g_{i}). 
\end{gathered}
\label{if.11}\end{equation}
Here, $\{f_{\alpha}\}$ is a local orthonormal basis of the horizontal
tangent space, $\{g_{i}\}$ is a local orthonormal basis of the vertical
tangent space and $\sigma$ is the matrix 
\[
\sigma= \left(\begin{array}{cc} 0& 1 \\ 1 & 0 \end{array}\right).
\]
Since the fibration $\pa\tilde\pi:\pa\tSigma\times \cN\to \cN$ is trivial, 
the curvature $\Omega^{\pa\tilde\pi}$ and the 
second fundamental form $k^{\pa\tilde\pi}$ both vanish identically.
Thus, we have in fact
\begin{equation}
\bbB_{[0]}= \sigma \db_{\tE_{i}}^{V}, \quad \bbB_{[1]}=\sum_{\alpha} \varepsilon(f^{\alpha}) \nabla^{\tE_{i}\oplus \tE_{i}}, \quad \bbB_{[2]}=0. 
\label{if.12}\end{equation}

By lemma~\ref{con.9}, the family $\db^{V}_{E_{i}}$ is parallel with respect to the 
horizontal connection, $[\nabla^{\tE_{i}\oplus\tE_{i}}, \sigma\db^{V}_{E_{i}}]=0$,
which implies that
\[
  \bbB_{[0]}\bbB_{[1]}+ \bbB_{[1]}\bbB_{[0]}=0.
\]  
Consequently, $\bbB_{t}^{2}= t\bbB^{2}_{[0]}+ \bbB^{2}_{[1]}= t(\db^{V}_{E_{i}})^{2}+
\Omega^{\tE_{i}\oplus\tE_{i}}$.  The formula for the eta form therefore 
 simplifies to
\begin{equation}
\begin{aligned}
\eta(\db_{E_{i}}^{V}) &= \frac{1}{\sqrt{\pi}} \int_{0}^{\infty}
\STr_{\Cl(1)}\left(\frac{1}{2t^{\frac{1}{2}}} \sigma\db^{V}_{\tE_{i}}
 e^{-t(\db^{V}_{\tE_{i}})^{2}- \Omega^{\tE_{i}\oplus \tE_{i}}_{H}}   \right)dt \\
 & =\frac{1}{\sqrt{\pi}} \int_{0}^{\infty}
\Tr\left(\frac{1}{2t^{\frac{1}{2}}} \db^{V}_{\tE_{i}}
 e^{-t(\db^{V}_{\tE_{i}})^{2}- \Omega^{\tE_{i}}_{H}}   \right)dt.
\end{aligned}
\label{if.13}\end{equation}
where 
\[
  \Omega_{H}^{\tE_{i}}=\varepsilon(f^{\alpha})\varepsilon(f^{\beta})
\Omega^{\tE_{i}}(f_{\alpha},f_{\beta})
\]
is the horizontal contribution to the curvature of $\tE_{i}$.  It is such that
\[
  \Omega_{H}^{\tE_{i}}= \pa\tilde{\pi}^{*}\Omega^{E_{i}}
\]
where $\Omega^{E_{i}}$ is the curvature of the connection $\nabla^{E_{i}}$.
The fact that $\db^{V}_{\tE_{i}}$ is parallel with respect to the horizontal
connection also means that $\db^{V}_{\tE_{i}}$ commutes with 
$\Omega_{H}^{\tE_{i}}$.  In terms of the decomposition
\begin{equation}
 \db^{V}_{E_{i}}= \bigoplus_{j=1}^{r_{i}} \db^{V}_{E_{ij}},
\label{if.14}\end{equation}
this means that the eta form is given by
\begin{equation}
\eta(\db^{V}_{E})\sum_{j=1}^{r_{i}} \eta(\db^{V}_{E_{ij}})= 
\sum_{j=1}^{r_{i}} \frac{\eta(\db^{V}_{E_{ij}})_{[0]}}{k_{j}(p_{i})} 
\Tr\left(e^{-\Omega^{E_{ij}}}\right)
\label{if.15}\end{equation}
where $\Omega^{E_{ij}}= (\nabla^{E_{ij}})^{2}$.
The result then follows by using lemma~\ref{db.49} and summing over $i$ with the \textbf{renormalized} 
eta form given by (\cf \cite{BCO}, remark 4.101)
\begin{equation}
   \hat{\eta}(\db^{V}_{E}):=   \sum_{k} \frac{1}{(2\pi i)^{k}}  \eta(\db^{V}_{E})_{[2k]}.
\label{if.15b}\end{equation}
\end{proof}

\begin{theorem}
The Chern character of the families index of $\db_{E}$ is represented
in de Rham cohomology by the form
\begin{equation}
\pi_{*}(\Td(\Sigma)\Ch(E))- 
\sum_{i=1}^{n}\sum_{j=1}^{r_{i}}\left(\frac{1}{2}-\alpha_{j}(p_{i})\right)
\Ch(E_{ij})+ \sum_{\alpha_{1}(p_{i})=0} \Ch(E_{i1}).
\label{if.16}\end{equation}
\label{if.17}\end{theorem}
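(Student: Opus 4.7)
The plan is to apply Theorem 4.5 of \cite{Albin-Rochon1}, the families index theorem for Fredholm Dirac type hyperbolic cusp operators, to the family $D_E = \sqrt{2}(\db_E + \db_E^*)$. The Fredholm property was verified in Proposition~\ref{db.32}, so the hypotheses of that theorem are met. The theorem expresses the Chern character of the families index as a sum of three pieces: an interior Atiyah--Singer contribution, a boundary eta form contribution, and a finite-rank correction coming from the horizontal family acting on the kernel of the vertical family (when the latter is non-trivial).

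I would first identify the interior contribution. As in the proof of Lemma~\ref{db.35}, $D_E$ is a Dirac type operator on the Clifford module $S \otimes_{\bbR} (S^{*} \otimes \tE)$, where $S$ is a spinor bundle for $g_{\Sigma}$. The local Atiyah--Singer integrand is therefore $\hat A(\Sigma) \Ch(S^{*} \otimes \tE)$, and the usual identity $\hat A(\Sigma) \Ch(S^{*}) = \Td(\Sigma)$ on a Riemann surface produces the interior term $\pi_{*}(\Td(\Sigma) \Ch(E))$ after fibre integration. The boundary contribution is then $-\hat\eta(\db_E^V)$, which by Lemma~\ref{if.9} equals $-\sum_{i=1}^{n}\sum_{\alpha_j(p_i)>0}\bigl(\tfrac{1}{2}-\alpha_j(p_i)\bigr)\Ch(E_{ij})$.

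The main remaining obstacle, which I would tackle last, is the identification of the horizontal correction at cusps $p_i$ where $\alpha_1(p_i)=0$. At such a cusp the vertical family $D^{V}_{\tE_i}$ has non-trivial kernel, and fibrewise over $\cN$ this kernel is naturally isomorphic to $E_{i1}\oplus E_{i1}$ (the two copies coming from the decomposition in \eqref{db.27}). By Lemma~\ref{db.35} the horizontal operator restricted to this kernel is the bundle endomorphism $-\tfrac{i}{2}c(du)$, a zero-dimensional invertible self-adjoint operator whose eta invariant on each fibre is $k_1(p_i)$. Combining this with the connection $\nabla^{E_{i1}}$ (which is just the restriction of $\nabla^{E}$ by Lemma~\ref{con.9}), the general correction formula of \cite{Albin-Rochon1} produces the finite-rank contribution $+\tfrac{1}{2}\sum_{\alpha_1(p_i)=0}\Ch(E_{i1})$ to the Chern character of the index.

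Summing the three contributions gives
\[
\pi_{*}\bigl(\Td(\Sigma)\Ch(E)\bigr) - \sum_{\alpha_j(p_i)>0}\bigl(\tfrac{1}{2}-\alpha_j(p_i)\bigr)\Ch(E_{ij}) + \tfrac{1}{2}\sum_{\alpha_1(p_i)=0}\Ch(E_{i1}).
\]
Extending the eta sum so that $j$ ranges over all of $\{1,\ldots,r_i\}$ introduces a compensating term $+\tfrac{1}{2}\sum_{\alpha_1(p_i)=0}\Ch(E_{i1})$, which combines with the horizontal correction to yield $+\sum_{\alpha_1(p_i)=0}\Ch(E_{i1})$, thereby matching the formula stated in the theorem.
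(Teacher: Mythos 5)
Your proposal is correct and follows essentially the same route as the paper: apply Theorem 4.5 of \cite{Albin-Rochon1}, identify the interior term with $\pi_{*}(\Td(\Sigma)\Ch(E))$, insert the eta form from Lemma~\ref{if.9}, and add the horizontal correction $+\tfrac{1}{2}\Ch(E_{i1})$ at cusps with $\alpha_{1}(p_{i})=0$ before reindexing the eta sum over all $j$. The only loose point is the intermediate claim that the eta invariant of $-\tfrac{i}{2}c(du)$ is $+k_{1}(p_{i})$; the paper's formula (4.12) of \cite{Albin-Rochon1} gives $\hat{\eta}(\db^{H}_{E_{i}})=\tfrac{1}{2}\sign\lrpar{-\tfrac{1}{2}}\Ch(\ker\db^{V}_{E_{i}})=-\tfrac{1}{2}\Ch(E_{i1})$, i.e.\ the negative sign of the operator matters, and it is this negative eta form that, entering the index with a minus sign, produces the $+\tfrac{1}{2}\Ch(E_{i1})$ you correctly report.
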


\begin{proof}
According to the index formula of (\cite{Albin-Rochon1}, theorem 4.5),   
the Chern character of the families index is represented by the form
\begin{equation}
\pi_{*}( \hat A(g_{\Sigma})\Ch'(E)) -\hat{\eta}(\db_{E}^{V})- \hat{\eta}(\db^{H}_{E}),
\label{if.18}\end{equation}
where $\Ch'(E)$ is the twisted Chern character of $E$ (see \cite{BGV} for a definition).
Since $T(\Sigma\times \cN)=T\Sigma\oplus T\cN$ is an orthogonal decomposition with respect
to the K\"ahler metric $g_{\Sigma}\oplus g_{\cN}$, there is a standard identification 
of forms 
\begin{equation}
\pi_{*}( \hat A(g_{\Sigma})\Ch'(E))= \pi_{*}(\Td(\Sigma) \Ch(E)).
\label{if.19}\end{equation}
Moreover, we computed the renormalized eta form of $\db^{V}_{\tE}$ in lemma~\ref{if.9}.  Thus,
it remains to compute the (renormalized) eta form of the horizontal family $\db^{H}_{\tE_{i}}$ 
for those $i$ such that $\alpha_{1}(p_{i})=0$.  From (\cite{Albin-Rochon1}, (4.12)),
we have,
\begin{equation}
 \hat{\eta}(\db^{H}_{E_{i}})= \frac{1}{2}\sign\lrpar{-\frac{1}{2}}\Ch(\ker \db^{V}_{E_{i}})=
 -\frac{1}{2} \Ch(\ker \db^{V}_{E_{i}}).
\label{if.20}\end{equation}
Since $\ker\db^{V}_{E_{i}}$ is canonically identified with  $E_{i1}$, this gives
\begin{equation}
 \hat{\eta}(\db^{H}_{E_{i}})=-\frac{1}{2} \Ch(E_{i1}).
\label{if.21}\end{equation}  
Combining this with the computation of the first two terms of \eqref{if.17}, the result follows.
\end{proof}

Similarly, we have an explicit formula for the Chern character of the index bundle of $\db_{\cE}$, starting with the following expression for its eta form.

\begin{lemma}
The renormalized eta form of $\db^{V}_{\tcE}$, $\hat{\eta}(\db^{V}_{\cE})$, is given by
\[
  \sum_{i=1}^{n}\sum_{j\ne l} \frac{\sign(\alpha_{j}(p_{i})-\alpha_{l}(p_{i}))(1- 2|\alpha_{j}(p_{i})-\alpha_{l}(p_{i})|)}{2}
   \Ch(E_{ij})\Ch(E_{il}^{*}). 
\]
\label{if.22}\end{lemma}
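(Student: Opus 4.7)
The plan is to run the argument of Lemma~\ref{if.9} in parallel, replacing $\tE_{i}$ by $\tcE_{i}$. From Lemma~\ref{db.43}, on each boundary component $\pa\tSigma_{p_{i}}$ the vertical family splits according to the joint eigenspace decomposition
\[
\db^{V}_{\cE_{i}} = \bigoplus_{j,l=1}^{r_{i}} \db^{V}_{\hom(\tE_{il},\tE_{ij})},
\]
where $\hom(\tE_{il},\tE_{ij})$ is flat of rank $k_{j}(p_{i})k_{l}(p_{i})$. The rapid-decay-at-the-cusps argument of Lemma~\ref{con.9}, which already concerns harmonic representatives with values in $\End(E_{\rho})$, applies verbatim and shows that each summand is parallel for the horizontal Chern connection on $\tcE_{i}$.

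The rescaled Bismut superconnection $\bbB_{t}$ for $\db^{V}_{\cE_{i}}$ then simplifies exactly as in \eqref{if.11}--\eqref{if.12}: triviality of the fibration $\pa\tilde\pi$ kills $\bbB_{[2]}$ together with the second fundamental form, parallelism forces $\bbB_{[0]}$ and $\bbB_{[1]}$ to anticommute, and hence $\bbB_{t}^{2}=t(\db^{V}_{\cE_{i}})^{2}+\Omega^{\tcE_{i}\oplus\tcE_{i}}_{H}$. Running the computation \eqref{if.13}--\eqref{if.15} with this data and summing over the $(j,l)$ blocks yields
\[
\eta(\db^{V}_{\cE_{i}}) = \sum_{j,l=1}^{r_{i}} \frac{\eta(\db^{V}_{\hom(\tE_{il},\tE_{ij})})_{[0]}}{k_{j}(p_{i}) k_{l}(p_{i})}\, \Tr\bigl(e^{-\Omega^{\hom(E_{il},E_{ij})}}\bigr),
\]
the denominator being the rank of the corresponding summand.

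Finally I would identify $\hom(E_{il},E_{ij}) \cong E_{ij} \otimes E_{il}^{*}$ as Hermitian bundles on $\cN$; by Lemma~\ref{Chern.1} applied to $\cE$ and restricted to the joint eigenspace pieces, the induced connection is the tensor product of the Chern connections of $E_{ij}$ and $E_{il}^{*}$. After the renormalization \eqref{if.15b}, the horizontal trace therefore becomes $\Ch(E_{ij})\Ch(E_{il}^{*})$. Substituting the eta invariants from Lemma~\ref{db.49}, which vanish when $j=l$ and otherwise equal $k_{j}(p_{i})k_{l}(p_{i})\sign(\alpha_{j}(p_{i})-\alpha_{l}(p_{i}))(1-2|\alpha_{j}(p_{i})-\alpha_{l}(p_{i})|)$, and summing over $i$, produces the claimed formula. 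No genuinely new ingredient beyond Lemma~\ref{if.9} is required; the main bookkeeping point is the overall factor $\tfrac{1}{2}$, which arises exactly as there from the $\frac{1}{\sqrt{\pi}}\int_{0}^{\infty}\frac{dt}{2\sqrt{t}}$ normalization built into the definition of the eta form combined with the division by $k_{j}(p_{i})k_{l}(p_{i})$.
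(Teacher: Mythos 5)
Your proposal is correct and follows essentially the same route as the paper: decompose $\db^{V}_{\tcE_{i}}$ into the blocks $\db^{V}_{\hom(\tE_{il},\tE_{ij})}$, use the triviality of $\pa\tilde\pi$ and Lemma~\ref{con.9} to reduce the Bismut superconnection exactly as in Lemma~\ref{if.9}, identify $\Ch(\hom(E_{il},E_{ij}))=\Ch(E_{ij})\Ch(E_{il}^{*})$, and insert the eta invariants of Lemma~\ref{db.49}. Your accounting of the overall factor $\tfrac12$ (the zero-form part of the eta form being half the eta invariant) matches the paper's.
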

\begin{proof}
As in the proof of lemma~\ref{if.9}, the fibration $\pa \tilde\pi:\pa\tSigma\times \cN\to \cN$ 
is trivial and has curvature $\Omega^{\pa\tilde\pi}$ and second fundamental form $k^{\pa\tilde\pi}$ 
vanishing identically.  By lemma~\ref{con.9}, the vertical family is also parallel with respect to the connection
of $\pa\tilde\pi_{*}\tcE_{i}$.  Thus, using the decomposition
\begin{equation}
  \db_{\tcE_{i}}^{V}= \bigoplus _{j,l} \db^{V}_{\hom(\tE_{il},\tE_{jl})} 
\label{if.23}\end{equation}
and proceeding as in the proof of lemma~\ref{if.9}, we can write the eta form of the 
vertical family $\db^{V}_{\cE_{i}}$ at the $i$th boundary component as 
\begin{equation}
\begin{aligned}
\hat{\eta}(\db^{V}_{\cE_{i}})&= \hat{\eta}\left(\bigoplus_{j,l} \db^{V}_{\hom(\tE_{il},\tE_{ij})}\right), \\
 &=\sum_{j,l} \frac{\eta(\db_{\hom(E_{il},E_{ij})})_{[0]}}{k_{j}(p_{i})k_{l}(p_{i})} 
   \Ch( \hom(E_{il},E_{ij})).
\end{aligned}
\label{if.24}\end{equation} 
Since $\hom(E_{il},E_{ij})= E_{ij}\otimes E_{il}^{*}$ and $\eta(\db_{\hom(\tE_{il},\tE_{ij})})_{[0]}$
is $\frac{1}{2}$ the eta invariant of the family, we conclude from lemma~\ref{db.49} that 
\begin{equation}
\hat{\eta}(\db^{V}_{\tcE_{i}})= \sum_{j\ne l} \frac{\sign(\alpha_{j}(p_{i})-\alpha_{l}(p_{i}))(1- 2|\alpha_{j}(p_{i})-\alpha_{l}(p_{i})|)}{2}
   \Ch(E_{ij})\Ch(E_{il}^{*}).
\label{if.25}\end{equation}
Summing over $i$ to get the contributions from all the parabolic points, we get the result.      
\end{proof}

For a fixed fibre of the moduli space $\cN$, the identity section $\Id_{E_{\rho}}\in \End(E_{\rho})$ 
is obviously in the kernel of $\db_{\cE_{\rho}}$.  In fact, up to a constant, this is the
only element, since the eigenspaces of any other element of the kernel would decompose $E$ into parabolic subbundles, 
contradicting the stability of $E_{\rho}$.  
In particular, the dimension of the kernel does not jump as one moves on the moduli space and there is a well-defined kernel bundle
$\ker\db_{\cE}$ trivialized by the identity section $\Id_{\cE}\in \End(E)$.  Consequently,
there is also a cokernel bundle $\ker\db^{*}_{\cE}\to \cN$.  As above, the connection of $\pi_{*}\cE$ induces connections on $\ker \db_{\cE}$ and $\ker \db_{\cE}^*$.
% and the orthogonal projection $P_{1}:\pi_{*}\cE\to \ker \db^{*}_{\cE}$ naturally defines
% a connection for this bundle, namely,
% \begin{equation}
%   \nabla^{\ker\db^{*}_{\cE}}= P_{1}\nabla^{\pi_{*}\cE}P_{1}.
% \label{if.26}\end{equation}
% Similarly, we have an induced connection $\nabla^{\ker \db_{\cE}}= P_{0} \nabla^{\pi_{*}\cE}
% P_{0}$ where $P_{0}: \pi_{*}\cE\to \ker\db_{\cE}$ is the orthogonal projection.  
With respect to the trivialization of the identity section $\Id_{\cE}$, this connection corresponds to the 
trivial connection.  Thus, in this context, the Chern character of the index bundle 
of the family $\db_{\cE}$ is represented by the form
\begin{equation}
  \Ch(\ker\db_{\cE}, \nabla^{\ker\db_{\cE}}) -
  \Ch(\ker\db^{*}_{\cE}, \nabla^{\ker\db^{*}_{\cE}})= 1 -
  \Ch(\ker\db^{*}_{\cE}, \nabla^{\ker\db^{*}_{\cE}})
\label{if.27}\end{equation}
\begin{theorem}
At the level of forms, the Chern character of the index bundle of $\db_{\cE}$ is
given by
\begin{equation*}
\begin{aligned}
1- \Ch(\ker\db^{*}_{\cE}, \nabla^{\ker\db^{*}_{\cE}})
&=  \pi_{*}(\Td(\Sigma) \Ch(\cE))  -\sum_{i=1}^{n}\sum_{j\ne l} \mu^{i}_{jl}
  \Ch(E_{ij})\Ch(E_{il}^{*}) \\
  &+ \frac{1}{2} \sum_{i=1}^{n} \sum_{l=1}^{r_{i}} \Ch(E_{il})\Ch(E_{il}^{*}) \\
  & -
  \left(\frac{1}{2\pi i}\right)^{\frac{N}{2}} d\int_{0}^{\infty} 
  \Str\left( \bbA^{t}_{D_{\cE}} e^{-(\bbA^{t}_{D_{\cE}})^{2}}\right)dt
  \end{aligned}
\end{equation*}
where $\bbA^{t}_{D_{\cE}}$ is the rescaled Bismut superconnection of 
$D_{\cE}= \sqrt{2}(\db_{\cE}+ \db_{\cE}^{*})$,
\[
 \mu^{i}_{jl}:= \sign(\alpha_{j}(p_{i})-\alpha_{l}(p_{i}))\left(\frac{1}{2}-|\alpha_{j}(p_{i})-\alpha_{l}(p_{i})|\right),
\]
and $N$ is the number operator for $\Lambda^{*}\cN$, that is, the action of $N$ on forms of degree $k$ on $\cN$ is multiplication by $k$.
\label{if.28}\end{theorem}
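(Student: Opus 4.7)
The plan is to combine the families index theorem of \cite{Albin-Rochon1} (Theorem 4.5), already applied to $\db_E$ in the proof of theorem~\ref{if.17}, with the computation of the vertical eta form in lemma~\ref{if.22}, the computation of the horizontal eta form analogous to lemma~\ref{db.35}, and a transgression argument that converts the cohomological identity given by the index theorem into the form-level identity displayed in the statement.

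First I would compute $\hat{\eta}(\db^{H}_{\cE_i})$. By lemma~\ref{db.43}, the kernel of $\db^{V}_{\tcE_i}$ consists exactly of the $k=0$ modes of the diagonal blocks $j=l$, so $\ker\db^{V}_{\tcE_i}$ is canonically identified with $\bigoplus_{j=1}^{r_i} E_{ij}\otimes E_{ij}^{*}$. Since $\tcE$ is flat, the argument of lemma~\ref{db.35} applies verbatim to give $D^{H}_{\tcE_i}=-\tfrac{i}{2}c(du)$ acting on $\ker D^{V}_{\tcE_i}$, so by (\cite{Albin-Rochon1}, (4.12)),
\begin{equation*}
\hat{\eta}(\db^{H}_{\cE_i})=\tfrac{1}{2}\sign(-\tfrac{1}{2})\,\Ch(\ker\db^{V}_{\cE_i})=-\tfrac{1}{2}\sum_{l=1}^{r_i}\Ch(E_{il})\Ch(E_{il}^{*}),
\end{equation*}
which, combined with lemma~\ref{if.22} and the Atiyah-Singer term $\pi_*(\Td(\Sigma)\Ch(\cE))$ exactly as in the proof of theorem~\ref{if.17}, produces the form appearing on the right-hand side of the theorem, up to the transgression term.

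Next I would justify the transgression term. Since $\Id_\cE$ trivializes $\ker\db_\cE$, the index bundle of $\db_\cE$ is $\underline{\bbC}-\ker\db_\cE^*$, and its Chern character, computed with the induced connections, is represented by the Bismut-superconnection form $\lim_{t\to\infty}\Str(e^{-(\bbA^{t}_{D_\cE})^{2}})=1-\Ch(\ker\db^{*}_{\cE},\nabla^{\ker\db^{*}_{\cE}})$. The families index theorem for $\hc$-operators identifies the $t\to 0$ limit with the Atiyah-Singer term minus the vertical and horizontal eta forms, and the two limits are related by the standard transgression formula
\begin{equation*}
\Str(e^{-(\bbA^{t_2})^{2}})-\Str(e^{-(\bbA^{t_1})^{2}})=-d\int_{t_1}^{t_2}\Str\!\left(\tfrac{d\bbA^{t}}{dt}\,e^{-(\bbA^{t})^{2}}\right)dt,
\end{equation*}
which upon taking $t_1\to 0$, $t_2\to\infty$ and appropriately normalizing (the $(2\pi i)^{-N/2}$ factor accounts for the passage from the Bismut form to the Chern character representative) yields the stated exact form.

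The main obstacle, as elsewhere in this paper, is the interchange of limits and derivatives that underlies the transgression argument in the noncompact $\hc$-setting; the behavior of $\Str(e^{-(\bbA^{t}_{D_\cE})^{2}})$ as $t\to 0$ must be shown to be governed by the local index density together with boundary contributions coming from the vertical and horizontal families, and the improper integral in $t$ must be shown to converge. This is handled by the short-time expansion of the regularized heat trace indicated in Appendix~\ref{appendixA} together with the Fredholmness and invertibility of $D^{H}_{\tcE}$ established in proposition~\ref{db.47}, which controls the large-$t$ behavior. Everything else reduces to bookkeeping with lemmata~\ref{if.22} and~\ref{db.49}.
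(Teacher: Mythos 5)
Your proposal is correct and follows essentially the same route as the paper: apply Theorem 4.5 of \cite{Albin-Rochon1} (which already packages the transgression term you re-derive), substitute the vertical eta form from lemma~\ref{if.22}, and compute the horizontal eta form via the identification $\ker\db^{V}_{\tcE_{i}}\cong\bigoplus_{j}E_{ij}\otimes E_{ij}^{*}$ together with formula (4.12) of \cite{Albin-Rochon1}. The extra detail you supply on the Bismut transgression and the short-time heat-trace expansion is exactly what the cited families index theorem provides, so nothing further is needed.
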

\begin{proof}
According to (\cite{Albin-Rochon1}, Theorem 4.5), we have the following formula,
\begin{multline}
1-\Ch(\ker\db_{\cE}^{*},\nabla^{\ker\db^{*}_{\cE}})= 
\pi_{*}( \hat A(\Sigma)\Ch'(\cE)) - \hat{\eta}(\db^{V}_{\cE}) -\hat{\eta}(\db^{H}_{\cE}) \\
 - \left(\frac{1}{2\pi i}\right)^{\frac{N}{2}} d\int_{0}^{\infty} 
  \Str\left( \bbA^{t}_{D_{\cE}} e^{-(\bbA^{t}_{D_{\cE}})^{2}}\right)dt.
\label{if.29}\end{multline}
As in theorem~\ref{if.17}, the first term on the right hand side can be rewritten 
in terms of the Todd form,
\[
\pi_{*}( \hat A(\Sigma)\Ch'(\cE))= \pi_{*}(\Td(\Sigma)\Ch(E)).
\]
The second term was computed in lemma~\ref{if.22}.  For the third term, we have according
to formula 4.12 in \cite{Albin-Rochon1},
\begin{equation}
\begin{array}{ll}
\hat{\eta}(\db^{H}_{\tcE_{i}})&=  \frac{1}{2} \sign\lrpar{-\frac{1}{2}} \Ch(\ker\db^{V}_{\tcE_{i}}) \\
                 &= -\frac{1}{2} \sum_{j=1}^{r_{i}} \Ch(E_{ij})\Ch(E^{*}_{ij}),  
\end{array}                 
\label{if.30}\end{equation}
since $\ker\db^{V}_{\tcE_{i}}$ is canonically identified with $\sum_{j=1}^{r_{i}} E_{ij}\otimes
E_{ij}^{*}$.  Summing over $i$, we get 
\begin{equation}
  \hat{\eta}(\db^{H}_{\cE})= -\frac{1}{2} \sum_{i=1}^{n}\sum_{j=1}^{r_{i}} \Ch(E_{ij})\Ch(E^{*}_{ij}).
\label{if.31}\end{equation}
\end{proof}

\section{The curvature of the determinant line bundle}\label{dlb.0}

In general, the geometry encoded in the Chern character of the index bundle is hard to unravel. The exception is the two-form part of the Chern character which is known to be the curvature of the determinant line bundle. This is true at the level of forms if these bundles are endowed with, respectively, the Bismut superconnection and the Quillen metric and connection (whose definition we now recall).

The determinant line bundle of a holomorphic family of Fredholm $\db$ operators $D_z$ 
is a line bundle over the parameter space, which at every point satisfies
\begin{equation}\label{DetBdle}
	(\Det D)_z 
	\cong (\Lambda^{\max} \ker D_{z})
	\otimes (\Lambda^{\max} \coker D_{z})^*.
\end{equation}
If the null spaces of $D_z$ fit together to form a vector bundle, then the right hand side of \eqref{DetBdle} serves as the definition of the determinant bundle. This is the case for instance for the family $D_{\cE}$.
If the the null spaces of $D_z$ do not form a bundle (e.g., when the dimension varies with $z$), but the spectrum of each $D_z$ is entirely made up of eigenvalues of finite multiplicity, then $\Det D$ can be constructed by a truncation procedure from   \cite{Quillen}, \cite{BF} (see also \cite{BGV}).
This is the case when the operators $D_z$ act on compact spaces; it is also the case for the family $D_E$ when all of the parabolic weights are non-zero by results of \cite{Vaillant}, as explained above.  

If the operators $D_z$ act on sections of a holomorphic bundle over a closed manifold, the line bundle $\Det D$ has a canonical choice of metric and connection defined using the zeta-regularized determinant of the family $D_z^*D_z$.
Recall that the zeta function of $D^*_zD_z$ is defined, for $\xi \gg 0$, by
\begin{equation*}
	\zeta_{D^*_zD_z} (\xi) = \frac1{\Gamma(\xi)} \int_0^{\infty} t^\xi \Tr(e^{-tD_z^*D_z} - \cP_{\ker D^*_zD_z} ) \frac{dt}t.
\end{equation*}
The short-time asymptotics of the heat kernel allow this function to be meromorphically continued to the whole complex plane.
The origin is a regular point of the extension and the derivative at the origin is used to define the determinant of $D_z^*D_z$ by
\begin{equation*}
	\log \det D_z^*D_z = - \zeta_{D^*_zD_z}'(0).
\end{equation*}
When $\ker D$ and $\coker D$ form actual bundles, the Quillen metric on the determinant line bundle is defined by starting with the $L^2$ metric induced from \eqref{DetBdle}, $\norm{\cdot}$, and then adjusting by the zeta-regularized determinant of $D_z^*D_z$,
\begin{equation}\label{QuillenNorm}
	\norm{\cdot}_Q = (\det D^*_zD_z)^{-1/2}\norm{\cdot}.
\end{equation}
When the nullspaces of $D_{z}$ do not form a bundle, the determinant line bundle does not have a well-defined induced
$L^{2}$-metric, but the Quillen metric does have a natural generalization which is globally well-defined. 
The Quillen metric is a Hermitian metric on the holomorphic line bundle $\Det D$ and hence has a unique compatible connection, the Chern connection. The curvature of this connection coincides with the two-form part of the Chern character of the Bismut superconnection on the index bundle of the family $D_z$, see for instance \cite[proposition~8.4]{Albin-Rochon2}.

If the prescribed weights are all non-zero, we have seen in proposition~\ref{db.32} that the operators
of the family $D_{E}$ all have discrete spectrum.  In fact, by the explicit construction of the heat
kernel given in \cite{Vaillant}, we also know that 
\begin{equation}
     e^{-t D^{2}_{_{E_{\rho}}}}  \in \rho^{\infty}_{\Sigma} 
     \Psi^{-\infty}_{b}(\tSigma; (\underline{\bbC}\oplus {}^{\hc}\Lambda^{0,1}_{\Sigma})\otimes \tE_{\rho})
\label{ds.1}\end{equation}
for $t>0$ and $[\rho]\in \cN$ ($\Psi_b$ denotes the space of $b$-pseudodifferential operators, see \cite{APSbook}).
In the family case, a similar statement holds for the heat kernel of the rescaled Bismut superconnection
$\bbA^{E}_{t}$.    In this non-compact context, the 
class of operators $\rho^{\infty}\Psi^{-\infty}_{b}(\tSigma)$ is really the analog of smoothing operators
on a compact manifold.  For instance, these operators are of trace class (while general operators in $\Psi^{-\infty}_b(\tSigma)$ are not).

The fact the spectrum of $D_{E_{\rho}}$ is discrete and its heat kernel satisfies
\eqref{ds.1}  indicates that  
the family $D_{E}$ spectrally behaves as a family of Dirac type operators on a compact manifold.  Because
of this, the standard definition of the Quillen metric and connection  and the computation of its curvature
for families of Dirac type operators acting on compact manifolds (as in \cite{Quillen}, \cite{BF} or \cite{BGV}) generalize almost immediately to the family $D_{E}$.  
The only
difference is that (see corollary~\ref{log.17} in Appendix~\ref{appendixA}) there are potentially extra powers of $\sqrt{t}$ involved in the asymptotic expansion of the trace of the heat kernel,
\begin{equation}
   \Tr(e^{-tD^{2}_{E_{\rho}}})= \frac{a_{-1}}{t} +
      \frac{a_{-\frac{1}{2}}}{\sqrt{t}} + a_{0} + \mathcal{O}(\sqrt{t}) \quad \mbox{as}\; t\to 0^{+}. 
\label{ds.2}\end{equation}
But the discussion in, for instance, \cite{BGV} works equally well with these extra asymptotic terms.  
This gives the following.

\begin{theorem}
When the weights of the parabolic structure are all non-zero, the curvature of the Quillen connection
$\nabla^{Q_{E}}$ 
associated to the determinant line of the family of operators $D_{E}$ is given by
\[
   \frac{i}{2\pi} (\nabla^{Q_{E}})^{2}= \pi_{*}(\Td(\Sigma)\Ch(E))_{[2]}-
        \sum_{i=1}^{n}\sum_{j=1}^{r_{i}}(\frac{1}{2}-\alpha_{j}(p_{i}))c_{1}(E_{ij})
\] 
\label{ds.3}\end{theorem}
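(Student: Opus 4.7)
The plan is to combine Theorem~\ref{if.17} with a Bismut--Freed--type identification of the curvature of the Quillen connection with the two-form component of the Chern character of the index bundle, suitably adapted to the non-compact setting described by Vaillant's calculus. The conclusion is then the extraction of the degree-two piece of the formula in Theorem~\ref{if.17}, using that the hypothesis of non-vanishing weights kills the summands $\sum_{\alpha_1(p_i)=0}\Ch(E_{i1})$ so only $\pi_*(\Td(\Sigma)\Ch(E))_{[2]}$ and $-\sum_{i,j}(\tfrac12-\alpha_j(p_i))\,c_1(E_{ij})$ survive.

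First I would record the spectral inputs. Under the hypothesis that no parabolic weight vanishes, Proposition~\ref{db.32} supplies the discreteness of $\spec(D_{E_\rho})$ for every $[\rho]\in\cN$; combined with \eqref{ds.1} and its family analogue for the rescaled Bismut superconnection $\bbA^{E}_{t}$, this puts the relevant heat operators in the trace-class ideal $\rho_\Sigma^\infty\Psi_b^{-\infty}(\tSigma;\cdot)$. The zeta function $\zeta_{D_{E_\rho}^*D_{E_\rho}}(\xi)$ is then meromorphic on $\bbC$ and regular at $\xi=0$, so the Quillen norm \eqref{QuillenNorm} is well-defined; when $\dim\ker D_{E_\rho}$ jumps, one falls back on the truncation construction of \cite{Quillen,BF}, valid because eigenvalues have finite multiplicity.

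Next I would set up the Bismut--Freed transgression on $\cN$. Writing the superconnection heat form $\Str(\bbA^{E}_t e^{-(\bbA^{E}_t)^2})$, one uses the $t\to\infty$ limit to recover the $L^2$-curvature of $\ker D_E$ and $\coker D_E$, and the $t\to 0$ asymptotic to pick up the local index density. The only technical departure from the closed-manifold case is that, by Corollary~\ref{log.17}, the regularised heat trace has an expansion of the shape \eqref{ds.2} containing a $t^{-1/2}$ term; but the formal Duhamel and transgression arguments of \cite{BGV} use only the existence of such an expansion, not the absence of half-integer powers, so they go through unchanged and yield the identity
\begin{equation*}
\frac{i}{2\pi}(\nabla^{Q_E})^2 \;=\; \bigl[\Ch(\Ind \db_E)\bigr]_{[2]}
\end{equation*}
at the level of forms on $\cN$.

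Finally I would specialize Theorem~\ref{if.17} to its two-form component. Since $\Ch(E_{ij})_{[2]}=c_1(E_{ij})$, and since under the hypothesis all parabolic weights are non-zero the last sum in \eqref{if.16} is empty, what remains is precisely the formula claimed. The main obstacle is the second step: verifying that the various interchanges of limits, differentiations in $[\rho]\in\cN$, and the fibrewise trace commute with the parametrix construction on the non-compact fibres, and that the anomalous $t^{-1/2}$ coefficient $a_{-\frac12}$ in \eqref{ds.2} is smooth in $[\rho]$ and contributes no boundary term to the Chern--Simons transgression. Most of this is implicit in Vaillant's heat kernel construction and in the families pushforward calculus of \cite{Albin-Rochon1}, so the proof is essentially a matter of verifying that these ingredients plug into the standard Bismut--Freed machinery.
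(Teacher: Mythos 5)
Your proposal is correct and follows essentially the same route as the paper: the paper justifies Theorem~\ref{ds.3} by observing that discreteness of the spectrum (Proposition~\ref{db.32}) and the trace-class property \eqref{ds.1} let the standard Quillen/Bismut--Freed machinery of \cite{Quillen}, \cite{BF}, \cite{BGV} carry over verbatim (the extra half-integer powers in \eqref{ds.2} being harmless), after which the formula is just the two-form part of Theorem~\ref{if.17} with the $\alpha_1(p_i)=0$ sum empty and $\Ch(E_{ij})_{[2]}=c_1(E_{ij})$. No gaps.
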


One can also consider, instead of the family $\db_{E}$, the corresponding family $\db_{\overline{E}}$
on the compactified fibration $\cSigma\times\cN\to \cN$. In principle, this leads to a different determinant
line bundle since according to theorem~\ref{if.17}, $\db_{E_{\rho}}$ and $\db_{\overline{E}_{\rho}}$ have in general different indices.  
When the weights are non-zero and rational, Biswas and Raghavendra, in \cite{Biswas-Raghavendra},
computed the curvature of the determinant line bundle of the family $\db_{\overline{E}}$ defined
on the fibration $\cSigma\times\cN\to \cN$.  Their approach consisted in `unfolding' the parabolic
structure by lifting the family of operators $\db_{\overline{E}}$ to an appropriate cover $Y\to\cSigma$ 
ramified at the marked points $p_{1},\ldots,p_{n}$.    On this ramified cover $Y$, they could use
the idea of Quillen \cite{Quillen} to compute the curvature of the Quillen connection and relate
it to the natural symplectic form of the moduli space.

For the family of operators $\db_{\cE}$, there is also no problem defining the determinant bundle
since the kernel and cokernel of $\db_{\cE}$ form vector bundles and we can define $\Det \db_{\cE}$ directly by \eqref{DetBdle}. Moreover, since the bundle $\ker \db_{\cE}$ is trivial, we have
\begin{equation*}
	(\Det \db_{\cE})_z = (\Lambda^{\max} \coker (\db_{\cE})_z)^*.
\end{equation*}

The definition of the Quillen metric on $\Det \db_{\cE}$ is not as straightforward since their heat kernels need not be of trace class.
Indeed, recall that if $\cK_t(\zeta, \zeta')$ denotes the integral kernel of the heat kernel of $D_z^*D_z$, so that
\begin{equation*}
	e^{-tD_z^*D_z}f = \int \cK_t(\zeta, \zeta') f(\zeta') \; d\zeta',
\end{equation*}
then Lidskii's theorem says that the trace of $e^{-tD_z^*D_z}$ when it exists is given by
\begin{equation*}
	\Tr e^{-tD_z^*D_z} = \int \cK_t(\zeta, \zeta) \; d\zeta.
\end{equation*}
On a non-compact manifold, $\cK_t(\zeta, \zeta)$ will be a smooth function but need not be integrable.
This is the case for $\db_{\cE}^*\db_{\cE}$ for which we have a very precise description of the heat kernel from Vaillant's thesis \cite[\S 4]{Vaillant}.
Nevertheless, from this description we know that, if $x$ is a boundary defining function and $\eps>0$, then $ \int_{x\geq \eps} \cK_t(\zeta, \zeta)$ is finite and has an asymptotic expansion in $\eps$ so we can define
\begin{equation*}
	\RTr{ e^{-tD_z^*D_z} }
	= \sideset{^R}{}\int \cK_t(\zeta, \zeta) \; d\zeta 
	= \FP_{\eps=0} \int_{x\geq \eps} \cK_t(\zeta, \zeta) \; d\zeta.
\end{equation*}
This is a functional that coincides with the trace on operators of trace class, but that does not necessarily vanish on commutators.

The renormalized trace extends to $e^{-tD_z^*D_z} - \cP_{\ker D_z^*D_z}$ and so we define
\begin{equation*}
	{}^R\zeta_{\db_{\cE}^*\db_{\cE}} (\xi) 
	= \frac1{\Gamma(\xi)} \int_0^{\infty} t^\xi 
	\; {}^R\Tr(e^{-t\db_{\cE}^*\db_{\cE}} - \cP_{\ker \db_{\cE}^*\db_{\cE}} ) \frac{dt}t.
\end{equation*}
The description of the heat kernel in Vaillant's thesis also implies an expansion in $t$ as $t \to 0^+$ (see appendix~\ref{appendixA} below) which allows us to extend ${}^R\zeta_{\db_{\cE}^*\db_{\cE}}$ meromorphically to the whole complex plane and define
\begin{equation}
	\log \det \db_{\cE}^*\db_{\cE} = - {}^R\zeta_{D^*_zD_z}'(0)
\label{dlb.1a}\end{equation}
and then define $\norm{\cdot}_Q$ by \eqref{QuillenNorm}.

To define the Quillen connection of the determinant line bundle $\det \db_{\cE}$, we can take 
the Chern connection with respect to its Quillen metric.
However, to compute its curvature, it is better to use an alternative definition in terms of heat kernels.  
This require some preparation.  
On the moduli space $\cN$, we need to consider the 
Fr\'echet bundle $\pi_{*}\cE\to \cN$ whose fibre at $[\rho]\in \cN$ is given by
\begin{equation}
\pi_{*}\cE_{[\rho]}= \dot{\mathcal{C}}^{\infty}(\Sigma; \cE_{\sigma([\rho])}\otimes (
 \Lambda^{0,0}_{\Sigma}\oplus \Lambda^{0,1}_{\Sigma})\otimes |{}^{\hc}\Lambda_{\Sigma}|^{\frac{1}{2}})
\label{dlb.1}\end{equation}  
where $|{}^{\hc}\Lambda_{\Sigma}|^{\frac{1}{2}}$ is the half-density bundle on $\Sigma$.  Since the bundle $|{}^{\hc}\Lambda_{\Sigma}|$ is canonically 
trivialized by the volume form of the hyperbolic metric $g_{\Sigma}$, the families
of Dirac type operators 
\begin{equation}
D_{\cE}:= \sqrt{2}(\db_{\cE}+\db^{*}_{\cE}), \quad 
   D_{\cE}^{+}= \sqrt{2}\db_{\cE},\quad D_{\cE}^{-}= \sqrt{2}\db_{\cE}^{*}
\label{dlb.2}\end{equation}
naturally act on sections of $\pi_{*}\cE_{[\rho]}$.  The connection $\nabla^{\cE}$ described
in \eqref{con.6} then naturally induces a covariant derivative $\nabla^{\pi_{*}\cE}$ on
the Fr\'echet bundle $\pi_{*}\cE\to \cN$.  This allows one to define the rescaled superconnection
\begin{equation}
   \bbA^{s}_{\cE}:= s^{\frac{1}{2}}D_{\cE}+ \nabla^{\pi_{*}\cE}
\label{dlb.3}\end{equation}
and for $s\in\bbR^{+}$ the differential forms 
\begin{equation}
   \alpha^{\pm}(s):= {}^{R}\Tr_{\pi_{*}\cE^{\pm}}\left( \frac{\pa \bbA^{s}_{\cE}}{\pa s}
      e^{-(\bbA^{s}_{\cE})^{2}} \right).
\label{dlb.4}\end{equation}
As in equation (8.12) of \cite{Albin-Rochon2}, the integrals
\begin{equation}
\beta^{\pm}_{\cE}(z):= \int^{\infty}_{0} t^{z} \alpha^{\pm}_{\cE}(t)_{[1]}dt
\label{dlb.5}\end{equation} 
are holomorphic for $\Re z\gg 0$ and admit meromorphic extensions to the whole complex plane.
In particular, one can consider the $1$-forms 
\begin{equation}
  \beta^{\pm}_{\cE}:= \FP_{z=0}\frac{d}{dz} \left( \frac{1}{\Gamma(z)} \beta^{\pm}_{\cE}(z)  \right).
\label{dlb.6}\end{equation}
The orthogonal projections $P_{\pm}:\pi_{*}\cE^{\pm}\to \ker D^{\pm}_{\cE}$ induce connections
\begin{equation}
  \nabla^{\ker D^{\pm}_{\cE}}:= P_{\pm}\nabla^{\pi_{*}\cE^{\pm}}P_{\pm},
\label{dlb.7}\end{equation}
and so a connection $\nabla^{\det\db_{\cE}}$ on the determinant line bundle $\det\db_{\cE}$.  This
connection is holomorphic and is the Chern connection with respect to the $L^{2}$-metric
induced on $\det\db_{\cE}$.  The \textbf{Quillen connection} on $\det \db_{\cE}$ is defined
to be the connection  
\begin{equation}
   \nabla^{Q_{\cE}}:= \nabla^{\det \db_{\cE}}+ \beta^{+}_{\cE}.
\label{dlb.8}\end{equation}
\begin{proposition}
The Quillen connection is the Chern connection of $\det\db_{\cE}$ with respect to the Quillen
metric $\|\cdot\|_{Q_{\cE}}$.
\label{dlb.9}\end{proposition}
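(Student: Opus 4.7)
The plan is to show that $\nabla^{Q_{\cE}}$ satisfies the two defining conditions of the Chern connection on the holomorphic line bundle $\Det \db_{\cE}$ equipped with the Quillen metric $\norm{\cdot}_{Q_{\cE}}$, namely compatibility with the holomorphic structure and compatibility with $\norm{\cdot}_{Q_{\cE}}$. Since $\nabla^{\det \db_{\cE}}$ is, by construction, the Chern connection for the $L^{2}$-metric $\norm{\cdot}$ induced from \eqref{DetBdle}, and since
\[
\norm{\cdot}^{2}_{Q_{\cE}}=(\det \db_{\cE}^{*}\db_{\cE})^{-1}\norm{\cdot}^{2},
\]
the Chern connection for the Quillen metric must be $\nabla^{\det \db_{\cE}} - \pa \log \det \db_{\cE}^{*}\db_{\cE}$, where $\pa$ denotes the $(1,0)$-part of the exterior derivative on $\cN$. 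It therefore suffices to show that the $1$-form $\beta^{+}_{\cE}$ defined in \eqref{dlb.6} is of type $(1,0)$ and equals $-\pa \log \det \db_{\cE}^{*}\db_{\cE}$.

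To verify the type $(1,0)$, I would exploit the holomorphic dependence of the family recorded in \eqref{con.8}: since $\nabla_{\bar \nu}\db_{\cE}=0$ and $\nabla_{\nu}\db_{\cE}^{*}=0$, in the splitting $\nabla^{\pi_{*}\cE}=\nabla^{1,0}+\nabla^{0,1}$ the $(0,1)$-component of $\bbA^{s}_{\cE}$ commutes with $D^{+}_{\cE}=\sqrt{2}\db_{\cE}$. Expanding $\exp(-(\bbA^{s}_{\cE})^{2})$ via Duhamel and extracting the $(0,1)$-part of $\alpha^{+}(s)_{[1]}$, the resulting terms become renormalized traces of graded commutators which, up to boundary defects controlled by Appendix~\ref{appendixA}, collapse to zero.

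For the identification with $-\pa \log \det \db_{\cE}^{*}\db_{\cE}$, let $\nu$ be a holomorphic tangent vector at $[\rho]\in \cN$. Using $\nabla_{\nu}D^{+}_{\cE}=\sqrt{2}\,\ad\nu$ and $\nabla_{\nu}D^{-}_{\cE}=0$, Duhamel's formula reduces the $(1,0)$-part of $\alpha^{+}_{\cE}(t)_{[1]}(\nu)$ to a multiple of $\pa_{\nu}\,{}^{R}\!\Tr\lrpar{e^{-2t\db_{\cE}^{*}\db_{\cE}}-\cP_{\ker \db_{\cE}^{*}\db_{\cE}}}$. Substituting into \eqref{dlb.5} turns $\beta^{+}_{\cE}(\nu,z)$ into $\pa_{\nu}$ acting on the Mellin transform defining ${}^{R}\zeta_{\db_{\cE}^{*}\db_{\cE}}$; then applying $\FP_{z=0}\frac{d}{dz}\lrpar{\Gamma(z)^{-1}\,\cdot\,}$ as in \eqref{dlb.6} produces $\pa_{\nu}\bigl(-{}^{R}\!\zeta'_{\db_{\cE}^{*}\db_{\cE}}(0)\bigr)=\pa_{\nu}\log \det \db_{\cE}^{*}\db_{\cE}$, with the overall sign matching the prescription $\beta^{+}_{\cE}=-\pa\log \det \db_{\cE}^{*}\db_{\cE}$.

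The chief technical obstacle is the bookkeeping for the renormalized trace. Unlike the ordinary trace, $^{R}\!\Tr$ does not vanish on commutators and does not commute freely with horizontal derivatives, so the Duhamel manipulations above acquire potential boundary corrections whenever one rearranges $D_{\cE}$, $\nabla^{\pi_{*}\cE}$, and the heat semigroup. The short-time expansion developed in Appendix~\ref{appendixA} for the regularized heat trace of $\db_{\cE}^{*}\db_{\cE}$ --- together with its smooth dependence on the family parameter and with the description of the heat kernel from \cite[\S 4]{Vaillant} --- is precisely what is needed to show that these defects either vanish in the relevant Mellin transforms or cancel against the $\cP_{\ker \db_{\cE}^{*}\db_{\cE}}$ contributions, so that the argument reduces to the compact-manifold computation of \cite{BGV}.
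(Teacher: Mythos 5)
Your overall strategy coincides with the paper's: reduce to the compact-case computation of \cite{BGV}/\cite{BF} (identify $\beta^{+}_{\cE}$ with $-\pa\log\det\db^{*}_{\cE}\db_{\cE}$ via Duhamel and the Mellin transform), with the renormalized trace replacing the trace. The gap is in how you dispose of the commutator defects. You write that the short-time expansion of Appendix~\ref{appendixA} is ``precisely what is needed'' to make the defects vanish or cancel against $\cP_{\ker\db^{*}_{\cE}\db_{\cE}}$. That is not the right mechanism: the short-time expansion only governs the meromorphic continuation of the Mellin transforms and the finite-part extraction; it says nothing about whether ${}^{R}\STr$ annihilates the particular commutators produced by the Duhamel rearrangements. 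In general it does not --- the renormalized (super)trace of a commutator is a nonzero boundary term determined by the asymptotic expansions of the two factors at the corner of $\tSigma\times\tSigma$, and there is no reason for such terms to cancel against the kernel projector.

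The ingredient that actually closes the argument, and which your proposal does not identify, is that $[\nabla^{\pi_{*}\cE},D^{\pm}_{\cE}]$ is the operator of exterior/interior multiplication by $\ad\nu$ and $*\ad(*\nu)$ for $\nu\in\cH^{0,1}(\Sigma,\End(E_{\sigma([\rho])}))$ (see \eqref{con.8}), and every such harmonic form decays rapidly at the cusps (proof of lemma~\ref{con.9}). Hence the Schwartz kernel of $[\nabla^{\pi_{*}\cE},D^{\pm}_{\cE}]$ vanishes to all orders at the front face, its asymptotic expansion at the corner is trivial, and therefore ${}^{R}\STr\bigl([[\nabla^{\pi_{*}\cE},D^{\pm}_{\cE}],P]\bigr)=0$ for all residual $P$ --- this is lemma~\ref{dlb.10}, and it is exactly the statement that lets one run the closed-manifold computation verbatim. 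Without isolating this geometric fact (which is special to the moduli-space families at hand, not a general feature of $\hc$-operators), the ``bookkeeping for the renormalized trace'' you defer cannot be completed, so your proof as written does not go through; with it, the rest of your outline is the standard argument and is fine.
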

The proof of this proposition is essentially the same as the one of proposition 8.4 in
\cite{Albin-Rochon2}.  We will therefore not repeat it here, but simply point out that in
\cite{Albin-Rochon2}, lemma 8.1 was the key fact that allowed one to proceed as in the 
compact case (\cf \cite{BGV} and \cite{BF}).  In our context, the equivalent of this fact
is the following.
\begin{lemma}
The Schwartz kernel of $[\nabla^{\pi_{*}\cE},D^{\pm}_{\cE}]$ vanishes to all orders at the
front face.  In particular, for $P\in \Psi^{-\infty}((\Sigma\times \cN)/\cN; \cE\otimes (
 \Lambda^{0,0}_{\Sigma}\oplus \Lambda^{0,1}_{\Sigma})\otimes |\Lambda_{\Sigma}|^{\frac{1}{2}})$ a smooth family of smoothing operators on $\Sigma$ parametrized by $\cN$,
\[
   {}^{R}\STr\left( [[\nabla^{\pi_{*}\cE},D^{\pm}_{\cE}],P]   \right)=0.
\] 
\label{dlb.10}\end{lemma}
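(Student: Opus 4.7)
The plan is to reduce the statement to a standard tracial property of the renormalized supertrace in the $\hc$-calculus, using the explicit formulas \eqref{con.7}--\eqref{con.8} for the covariant derivative of the family together with the rapid-decay estimate already established within the proof of lemma~\ref{con.9}.

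First, by \eqref{con.8}, at each $[\rho]\in\cN$ and for $\nu\in\cH^{0,1}(\Sigma;\End(E_{\sigma([\rho])}))$,
\[
  [\nabla^{\pi_{*}\cE}_{\nu},\db_{\cE}]=\ad\nu,\qquad
  [\nabla^{\pi_{*}\cE}_{\overline{\nu}},\db_{\cE}^{*}]=-*\ad(*\nu),
\]
and the remaining mixed commutators vanish. Each of these is a zeroth-order multiplication operator on $\Sigma$ (exterior product by a form combined with the commutator on $\End(E_{\rho})$), and the analogous statements hold for $D_{\cE}^{\pm}=\sqrt{2}\,\db_{\cE}$ and $\sqrt{2}\,\db_{\cE}^{*}$. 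Consequently, the Schwartz kernel of $[\nabla^{\pi_{*}\cE},D_{\cE}^{\pm}]$ is a delta distribution supported on the lifted diagonal of the $\hc$-stretched double space, whose coefficient is precisely one of these multipliers.

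Second, as was shown in the proof of lemma~\ref{con.9}, every $\nu\in\cH^{0,1}(\Sigma;\End(E_{\rho}))$ (and hence also $*\nu$) is of rapid decay at each cusp in the coordinates \eqref{db.6}. Since the lifted diagonal of the $\hc$-stretched double space meets the boundary precisely in (a portion of) the front face, this cuspidal rapid decay of the multipliers translates directly into the Schwartz kernel of $[\nabla^{\pi_{*}\cE},D_{\cE}^{\pm}]$ vanishing to all orders at the front face, which is the first assertion of the lemma.

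Third, for the identity ${}^{R}\STr([[\nabla^{\pi_{*}\cE},D^{\pm}_{\cE}],P])=0$, I would invoke the standard fact (analogous to the $b$-calculus result in \cite{APSbook}, obtained from the structure theory of $\hc$-operators developed in \cite{Vaillant} and \cite{Albin-Rochon1}) that the subspace of $\Psi^{-\infty}_{\hc}$ consisting of operators whose Schwartz kernels vanish to infinite order at the front face forms a two-sided ideal on which ${}^{R}\STr$ coincides with the ordinary $L^{2}$-supertrace and is therefore a genuine trace. Setting $A:=[\nabla^{\pi_{*}\cE},D^{\pm}_{\cE}]$, the first part of the lemma places $A$ in this ideal; since the ideal absorbs composition with any $P\in\Psi^{-\infty}_{\hc}$, both $AP$ and $PA$ lie in it and hence
\[
   {}^{R}\STr\bigl([A,P]\bigr)={}^{R}\STr(AP)-{}^{R}\STr(PA)=0.
\]
The main obstacle is to extract, or verify directly, the precise statement that ${}^{R}\STr$ is tracial on this ideal of $\hc$-smoothing operators; this is the only non-trivial analytic input beyond the algebraic identities \eqref{con.7}--\eqref{con.8} and the decay of harmonic representatives already proved in lemma~\ref{con.9}, and it should follow by a direct adaptation of the analogous $b$-calculus argument in \cite{APSbook}.
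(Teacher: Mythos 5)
Your proposal is correct and follows essentially the same route as the paper: the first assertion is obtained from the explicit formulas \eqref{con.7}--\eqref{con.8} together with the rapid decay of harmonic representatives established in the proof of lemma~\ref{con.9}, and the vanishing of the renormalized supertrace of the commutator is then the standard trace-defect fact that ${}^{R}\STr$ fails to be tracial only through boundary (front-face/corner) asymptotics, which are trivial here. Your phrasing via the residual ideal on which ${}^{R}\STr$ agrees with the genuine $L^{2}$-supertrace is just a mild repackaging of the paper's appeal to the dependence of the trace defect on the corner expansion.
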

\begin{proof}
The fact that $[\nabla^{\pi_{*}\cE},D^{\pm}_{\cE}]$ vanishes to all orders at the front face
follows from \eqref{con.7} and the fact that a form $\nu$ in $\cH^{0,1}(\Sigma,\End(E_{\sigma([\rho])}))$ is necessarily of rapid decay as one approaches a
puncture (see the proof of lemma~\ref{con.9}).  Now, it is well-known that  
\[
   {}^{R}\STr\left( [[\nabla^{\pi_{*}\cE},D^{\pm}_{\cE}],P]\right) 
\] 
only depends linearly on the asymptotic expansion of the Schwartz kernels of $P$ and
 $[\nabla^{\pi_{*}\cE},D^{\pm}_{\cE}]$ at the corner of $\tSigma\times \tSigma$.  The 
 asymptotic expansion of $[\nabla^{\pi_{*}\cE},D^{\pm}_{\cE}]$ being trivial, the result
 follows. 
\end{proof}
With this lemma, the proof of proposition~\ref{dlb.9} is essentially the same as the one 
of proposition 8.1 in \cite{Albin-Rochon2}.  
We refer to \cite{Albin-Rochon2} for further details.  
We can now proceed as in the closed case (\cf \cite{BF} and \cite{BGV}, see
also \cite{Piazza} for the $b$-case) to compute the curvature of the Quillen connection.

\begin{theorem} 
The curvature of the Quillen connection on the determinant line bundle of $\db_{\cE}$
is given by
\begin{multline*}
  \frac{i}{2\pi} (\nabla^{Q_{\cE}})^{2}= 
   \pi_{*}(\Td(\Sigma)\Ch(\cE))_{[2]}  \\
   - \sum_{i=1}^{n}\sum_{j\ne l} \sign(\alpha_{j}(p_{i})-\alpha_{l}(p_{i}))(1-2|\alpha_{j}(p_{i})
   -\alpha_{l}(p_{i})|)k_{l}(p_{i})c_{1}(E_{ij}).
\end{multline*}
where $c_{1}(E_{ij}):= \frac{i}{2\pi}\Tr((\nabla^{E_{ij}})^{2})$ is the first Chern form
of $E_{ij}$ with respect to its naturally induced connection $\nabla^{E_{ij}}$.
\label{dlb.11}\end{theorem}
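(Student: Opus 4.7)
The plan is to combine theorem~\ref{if.28} with the Bismut--Freed identification of the curvature of the Quillen connection with the two-form part of the Chern character of the Bismut superconnection. Since $\ker \db_{\cE}$ is trivialized by the identity section (with trivial induced connection), the $L^2$ connection on $\det \db_{\cE} = (\Lambda^{\max} \coker \db_{\cE})^{*}$ has curvature $-(\Ch(\coker \db_{\cE}, \nabla^{L^2}))_{[2]}$, which is exactly the two-form part of the left-hand side of theorem~\ref{if.28}. The modification $\beta^{+}_{\cE}$ in the definition \eqref{dlb.8} of the Quillen connection is tailored so that $\tfrac{i}{2\pi}d\beta^{+}_{\cE}$ cancels (at the level of two-forms) the exact transgression term $(2\pi i)^{-N/2} d \int_0^\infty \Str(\bbA^t_{D_{\cE}} e^{-(\bbA^t_{D_{\cE}})^2})dt$ appearing in theorem~\ref{if.28}. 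The justification of this cancellation is the analog in our renormalized trace setting of the standard Bismut--Freed computation, and proceeds exactly as in the proof of Theorem 8.6 of \cite{Albin-Rochon2}: the only input one needs beyond the closed case is lemma~\ref{dlb.10}, which ensures that renormalized supertraces of commutators involving $[\nabla^{\pi_{*}\cE}, D^{\pm}_{\cE}]$ vanish, so that Duhamel's formula and the usual transgression identities go through unchanged.

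Granting this, I am reduced to computing
\[
  \tfrac{i}{2\pi}(\nabla^{Q_{\cE}})^{2}
  = \pi_{*}(\Td(\Sigma)\Ch(\cE))_{[2]} - \hat{\eta}(\db^{V}_{\cE})_{[2]} - \hat{\eta}(\db^{H}_{\cE})_{[2]},
\]
where the eta forms were computed in lemma~\ref{if.22} and equation~\eqref{if.31}. For the horizontal contribution $\hat{\eta}(\db^{H}_{\cE}) = -\tfrac{1}{2}\sum_{i,l}\Ch(E_{il})\Ch(E_{il}^{*})$, I note that at degree two
\[
  [\Ch(E_{il})\Ch(E_{il}^{*})]_{[2]} = k_{l}(p_{i})(-c_{1}(E_{il})) + k_{l}(p_{i}) c_{1}(E_{il}) = 0,
\]
so $\hat{\eta}(\db^{H}_{\cE})_{[2]} = 0$ and this term makes no contribution.

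For the vertical eta form, I use $\Ch(F) = \rank F + c_{1}(F) + \cdots$ and $c_{1}(E^{*}) = -c_{1}(E)$ to obtain
\[
  [\Ch(E_{ij})\Ch(E_{il}^{*})]_{[2]} = k_{l}(p_{i}) c_{1}(E_{ij}) - k_{j}(p_{i}) c_{1}(E_{il}).
\]
Substituting into lemma~\ref{if.22} gives two sums; I then relabel $j \leftrightarrow l$ in the second sum and use the antisymmetry of the coefficient $\sign(\alpha_{j}^{i} - \alpha_{l}^{i})(1 - 2|\alpha_{j}^{i} - \alpha_{l}^{i}|)$ under this swap to recognize that both sums are equal, doubling up and producing exactly
\[
  \hat{\eta}(\db^{V}_{\cE})_{[2]} = \sum_{i=1}^{n}\sum_{j \ne l} \sign(\alpha_{j}^{i} - \alpha_{l}^{i})(1 - 2|\alpha_{j}^{i} - \alpha_{l}^{i}|) k_{l}(p_{i}) c_{1}(E_{ij}).
\]
Plugging this in yields the stated formula.

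The main obstacle is conceptual rather than computational: it is the passage from the families index formula of theorem~\ref{if.28} (which involves an explicit transgression one-form) to the pointwise identity for the Quillen curvature, since in the non-compact setting the supertraces defining $\beta^{+}_{\cE}$ are only renormalized. This is precisely why proposition~\ref{dlb.9} and lemma~\ref{dlb.10} are established first; once they are in place, the remaining arguments are the straightforward two-form extraction and symmetrization outlined above.
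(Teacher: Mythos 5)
Your proposal is correct and follows essentially the same route as the paper: invoke the (renormalized) Bismut--Freed identification so that the Quillen curvature is the two-form part of the index formula of theorem~\ref{if.28} with the exact transgression term absorbed by $d\beta^{+}_{\cE}$, then extract the degree-two parts of the eta forms. The paper states this in two sentences; your version merely makes explicit the vanishing of $[\Ch(E_{il})\Ch(E_{il}^{*})]_{[2]}$ and the $j\leftrightarrow l$ antisymmetrization, both of which check out.
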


\begin{proof}
As in the case where the fibres are closed manifolds and as in theorem 8.5 of \cite{Albin-Rochon2}, the curvature of the Quillen connection is simply the two form part
of the Chern character of the index bundle given by theorem~\ref{if.28}, except for the exact term,
\[
-\left(\frac{1}{2\pi i}\right)^{\frac{N}{2}} d\int_{0}^{\infty} 
  \Str\left( \bbA^{t}_{D_{\cE}} e^{-(\bbA^{t}_{D_{\cE}})^{2}}\right)dt
\]
which does not contribute.  Putting all the terms involving $c_{1}(E_{ij})$ together for
$i\in \{1,\ldots,n\}$ and $j\in\{1,\ldots,r_{i}\}$, we get the desired result.
\end{proof}
Since $c_{1}(E_{ij})=c_{1}(\det E_{ij})$ is defined at the level of forms by using the Chern connection of $E_{ij}$ (\cf lemma 4
in \cite{TZ2}), 
our formula is the same as the one recently obtained by Takhtajan and Zograf in (\cite{TZ2}, theorem 2)\footnote{There is a typographical error in the statement of theorem 2 of \cite{TZ2}: the sum should be only for $l\ne m$.}.
In \cite{TZ2}, Takhtajan and Zograf also identified the term 
$\pi_{*}(\Ch(\cE))_{[2]}$ with the natural $(1,1)$-form of the moduli
space $\cN$,
\begin{equation}
   \pi_{*}(\Ch(\cE))_{[2]}= -\frac{1}{2\pi} \tilde{\Omega},
\label{dlb.15}\end{equation}
where 
\begin{equation}
\tilde{\Omega}\left( \frac{\pa}{\pa \varepsilon(\mu)}, \frac{\pa}{\pa \varepsilon(\nu)} \right)
= \frac{i}{2} \int_{\Sigma} \Tr\left(\ad \mu \wedge \ad *\nu \right).
\label{dlb.16}\end{equation}
This allowed them to use their formula to give a new way of computing the symplectic volume form of the moduli space $\cN$ in some special cases.

  The fact we get the same formula as in \cite{TZ2} is certainly expected at
the cohomological level,  but is not so trivial at the level of forms.  This is because {\em a priori}, a different definition
of the Quillen metric and Quillen connection is used in \cite{TZ2}.  Namely, the regularized determinant
\eqref{dlb.1a} is replaced by
\begin{equation}
  \det_{TZ}( \Delta_{E_{\rho}}):= \left.\frac{\pa}{\pa s}\right|_{s=1} Z(s,\Gamma;\Ad \rho)
\label{tz.1}\end{equation} 
where $Z(s,\Gamma;\Ad \rho)$ is the Selberg Zeta function associated to the operator
$\Delta_{E}$ (see \cite{TZ2} for more details and references).  

The fact we get the same formula as in \cite{TZ2} implies the following.

\begin{corollary}
Suppose that for a fixed weight system the moduli space $\cN$ is compact and admits a universal parabolic
stable vector bundle.  Then there is a constant $c_{\cN}>0$ such that 
\begin{equation}
       \det(D^{-}_{E}D^{+}_{E})= e^{-{}^{R}\zeta_{D^{-}_{E}D^{+}_{E}}'(0)}= c_{\cN} 
         \left.\frac{\pa}{\pa s}\right|_{s=1} Z(s,\Gamma;\Ad \rho).
\label{univ.1}\end{equation}
\label{tz2}\end{corollary}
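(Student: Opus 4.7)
The plan is to play off the two Hermitian metrics on the holomorphic line bundle $\det\db_{\cE}\to\cN$: the Quillen metric $\|\cdot\|_{Q_{\cE}}$ defined from the renormalized zeta determinant \eqref{dlb.1a}, and the Takhtajan--Zograf Quillen metric $\|\cdot\|_{Q,TZ}$ defined via the Selberg zeta function in \eqref{tz.1}. By construction both agree with the induced $L^{2}$ metric on $\det\db_{\cE}$ up to a positive scalar correction equal to the reciprocal of the corresponding regularized determinant, so the ratio of the squared norms equals
\[
   \frac{\|\cdot\|_{Q_{\cE}}^{2}}{\|\cdot\|_{Q,TZ}^{2}}=\frac{\det_{TZ}(\Delta_{E})}{\det(D^{-}_{E}D^{+}_{E})}.
\]
Establishing that this ratio is constant on $\cN$ will be equivalent to \eqref{univ.1}, so the whole proof is organized around showing constancy of this ratio.

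Next I would combine Theorem~\ref{dlb.11} with Theorem 2 of \cite{TZ2}, which give identical $(1,1)$-forms for the respective curvatures; this is exactly the matching already noted in the paragraph preceding the corollary, and it holds at the level of forms rather than merely in de Rham cohomology. Since two Hermitian metrics on a holomorphic line bundle whose Chern connections have the same curvature differ by multiplication by $e^{\varphi}$ for some real smooth function $\varphi$ with $\pa\bpa\varphi=0$, I conclude that
\[
  \varphi := \log\!\left( \frac{\det_{TZ}(\Delta_{E})}{\det(D^{-}_{E}D^{+}_{E})}\right)
\]
is pluriharmonic on $\cN$.

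The final step is to invoke the compactness hypothesis on $\cN$: a pluriharmonic function on a compact complex manifold is harmonic for any compatible K\"ahler metric, so the maximum principle forces $\varphi$ to be a real constant. Exponentiating gives the universal positive constant $c_{\cN}$ of \eqref{univ.1}, depending only on the combinatorial data (genus, marked points, weight system) entering $\cN$. The main obstacle I anticipate is not in any individual step, but rather in the bookkeeping that guarantees the two curvature computations, carried out in genuinely different analytic frameworks (heat-kernel regularization versus Selberg zeta regularization), agree pointwise as differential forms and not merely cohomologically; however, the preceding discussion in Section~\ref{dlb.0} has already verified this point, so it is the input rather than an additional task, leaving only the pluriharmonic and compactness arguments to carry out.
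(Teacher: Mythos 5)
Your argument is essentially identical to the paper's proof: both write the Takhtajan--Zograf Quillen metric as a positive smooth multiple of the heat-kernel Quillen metric, use the agreement of Theorem~\ref{dlb.11} with Theorem 2 of \cite{TZ2} at the level of forms to conclude that the logarithm of that multiple is annihilated by $\db\pa$, and then invoke compactness (and connectedness) of $\cN$ together with the maximum principle to deduce it is constant. No meaningful difference in route or substance.
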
 
\begin{proof}
Denote by $\|\cdot\|_{TZ}$ the Quillen metric used in \cite{TZ2}.  Then there exists a smooth 
positive function $f: \cN\to \bbR$ such that 
\begin{equation}
     \|\cdot\|_{TZ}^{2}= f \|\cdot\|_{Q_{\cE}}^{2}.
\label{tz.3}\end{equation}
Since theorem~\ref{dlb.11} leads to the same formula as in theorem 2 of \cite{TZ2}, we have also that
\begin{equation}
    (\nabla^{Q_{\cE}})^{2}= (\nabla^{TZ})^{2}
\label{tz.4}\end{equation}
where $\nabla^{TZ}$ is the Chern connection associated to the Hermitian metric \newline $\|\cdot\|_{TZ}$.  
Now, recall that if $s:\cU\to \det(\db_{\cE})$ is a local holomorphic section of 
$\det(\db_{\cE})$, then the curvature of the Chern connections can be written as
\begin{equation}
  (\nabla^{TZ})^{2}= \db \partial \log\|s\|_{TZ}^{2}, \quad (\nabla^{Q_{\cE}})^{2}= \db \partial \log\|s\|_{Q_{\cE}}^{2}.
\label{tz.5}\end{equation}
Thus, combining \eqref{tz.3}, \eqref{tz.4} and \eqref{tz.5}, we get
\begin{equation}
    \db \partial \log f=0.
\label{tz.6}\end{equation}
Since we assume that $\cN$ is compact and since $\cN$ is connected (see for instance proposition 2.8 in
\cite{Nitsure}), we conclude by the maximum principle that $\log f$ and $f$ are
constant and the result follows with $c_{\cN}= f$.  

\end{proof}

\begin{remark}
As shown in \cite{Mehta-Seshadri}, see also \cite{Boden-Hu}, for a generic weight system, semi-stability
implies stability, which means in that case that the moduli space $\cN$ of stable
parabolic vector bundles is compact.  On the other hand, it is shown in \cite{Boden-Yokogawa} that for generic weight systems, the moduli space $\cN$ of stable parabolic vector bundles of vanishing parabolic degree admits a universal parabolic vector bundle.  Thus, corollary~\ref{tz2} can be reformulated as saying that 
\eqref{univ.1} holds for a generic weight system.   
\label{tz.7}\end{remark}

Of course, in the spirit of \cite{Sarnak} (see also \cite{Efrat1}, \cite{Efrat2}, \cite{Muller}, \cite{BJP}, \cite{Friedman} and \cite{Albin-Rochon2} for 
various generalizations in non-compact situations), Corollary~\ref{tz2} should be a direct consequence of a more general
result of the form
\begin{equation}
  \det(D^{-}_{E}D^{+}_{E} +s(1-s))= Z(s, \Gamma, \Ad \rho) G(s)
\label{detSel}\end{equation} 
for some universal meromorphic function $G(s)$ depending only on $g$, $n$ and the set of weights and
multiplicities. Presumably, the methods of \cite{Albin-Rochon2}, where similar regularized traces are used, could be 
adapted to this context to get a formula of the form \eqref{detSel}.   In particular,  one expects Corollary~\ref{tz2} to also hold when the moduli space is not compact.

%%%%%%%%%%%%%%%%%%%%%%%%%%%%%%%%%%
%%%%%%%%%%%%%%%%%%%%%%%%%%%%%%%%%%
\appendix
\section{Short time expansion of the trace of the heat kernel}\label{appendixA}
%%%%%%%%%%%%%%%%%%%%%%%%%%%%%%%%%%
%%%%%%%%%%%%%%%%%%%%%%%%%%%%%%%%%%

The presence of extra powers of $\sqrt{t}$  in the short-time expansion of the heat kernel in \eqref{ds.2} is easily explained using Vaillant's description of the heat kernel.
As it is no harder, and perhaps more interesting, we explain these asymptotics in the more general context of metrics with fibered hyperbolic cusps (or $\fD$ metrics) from \cite{Vaillant} and \cite{Albin-Rochon1}.
Recall that a Riemannian metric $g$ on the interior of a manifold with boundary $M$ is a (product-type\footnote{All of our considerations extend to the class of {\em exact} $\fD$ metrics.}) $\fD$ metric if:
\begin{quote}
1) The boundary is the total space of a fibration
\begin{equation*}
	Z - \pa M \xrightarrow{\phi} Y
\end{equation*}
2) There is a collar neighborhood of the boundary $\mathrm{Coll}(\pa M)$ such that for some choice of extension of $\phi$ to $\wt\phi:\mathrm{Coll} \to Y$ and a choice of connection for $\wt\phi$, $g$ is a submersion metric for $\wt\phi$ of the form
\begin{equation*}
	g\rest{\mathrm{Coll}(\pa M)} = \frac{dx^2}{x^2} + \phi^*g_Y + x^2 g_Z
\end{equation*}
where $x$ is a boundary defining function, $g_Y$ is a metric on $Y$, and $g_Z$ restricts to a metric on each fibre.
\end{quote}
We will denote the dimensions of $M$ and $Y$ by $n$ and $h$ respectively.

Recall that for a differential operator $D$, its heat kernel $e^{-tD}$ is the solution to the equation
\begin{equation*}
	\begin{cases}
	(\pa_t + D)e^{-tD} = 0 \\
	\displaystyle \lim_{t\to 0} e^{-tD} = \Id
	\end{cases}
\end{equation*}
The heat kernel acts by means of its Schwartz kernel, $\cK$, so that
\begin{equation*}
	e^{-tD}f(\zeta) = \int \cK(\zeta, \zeta', t) f(\zeta') \; d\zeta'
\end{equation*}
and, when $e^{-tD}$ is trace-class, Lidskii's theorem gives its trace as
\begin{equation*}
	\Tr( e^{-tD} ) = \int \cK\rest{\diag}.
\end{equation*}

Let $\eth$ be a Dirac-type operator associated to a $\fD$-metric and assume that the null space of its vertical family, $\eth^{V}:= \left. x\eth\right|_{\pa M}$, forms a bundle over $Y$ (we say that $\eth_{\fD}$ satisfies the constant rank assumption).  For such an operator, Vaillant found a very precise description of the Schwartz kernel of $e^{-t\eth^2}$ as a smooth function on the interior of a manifold with corners, $HM_{\fD}$ with asymptotic expansions at each of the boundary faces.
This construction is carried out in \cite[Chapter 4]{Vaillant} (see also \cite{Albin-Rochon1}).

For understanding the trace of the heat kernel, it is enough for us to recall what Vaillant's construction says about the restriction of the Schwartz kernel to the diagonal. The diagonal in $M^2$ pulled back to $M^2 \times \bbR^+$ can be identified with a submanifold of the interior of $HM_{\fD}$, whose closure we denote $\diag_H$. It is easy to describe $\diag_H$ directly without having to review the construction of $HM_{\fD}$. 
It is convenient to use $\sqrt t$ instead of $t$ since generally the heat kernel is smooth as a function of the former but not the latter -- for instance the Euclidean heat kernel is given by $(4\pi t)^{-n/2} \exp(-|x-y|^2/4t)$.
Thus we start with $M \times \bbR^+_s$, where $s = \sqrt t$, and introduce polar coordinates around the corner $\pa M \times \{0\}$.
We can do this geometrically by radially blowing-up $\pa M \times \{0\}$,
that is, we replace this submanifold with its inward-pointing unit normal bundle. The resulting manifold is denoted
\begin{equation*}
	\diag_H = [M \times \bbR^+_s; \pa M \times \{0\}]
\end{equation*}
and has three boundary faces: the boundary face $\bhs{\obf}$ coming from the lift of $\pa M \times \bbR^+$ to $\diag_{H}$, the temporal face $\bhs{\otf}$ coming from the lift of $M \times \{0\}$ to $\diag_{H}$, and the cusp face $\bhs{\ocf}$ coming from the blow-up of $\pa M \times \{0\}$.
\begin{figure}[htpb]
     \centering
  \includegraphics[height=1in]{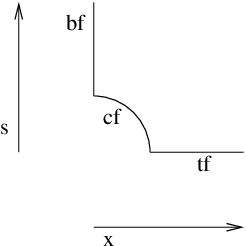}
   \caption{$\diag_H$}
    \label{DiagH}
\end{figure}
\phantom{We }
We will use $\rho_{\obf}$, $\rho_{\ocf}$ and $\rho_{\otf}$ to denote boundary defining functions for these faces.
From \cite[Lemma 5.26 (b)]{Vaillant} we know that the Schwartz kernel of $e^{-t\eth^2}$ satisfies
\begin{equation}\label{FromVail}
	\cK\rest{\diag_H} \in 
	\rho^{-1}_{\obf}\rho_{\ocf}^{-h}\rho_{\otf}^{-n+1} \CI( \diag_H, \Omega(\diag_H) ),
\end{equation}
where $\Omega(\diag_H)$ is the density bundle of $\diag_H$.  Thus, near $\bhs{\otf}\cap \bhs{\ocf}$, it is trivialized by the section $d\rho_{\otf}d\rho_{\ocf}$, while near $\bhs{\ocf}\cap\bhs{\obf}$, it is trivialized by the section $d\rho_{\ocf}d\rho_{\obf}$.
If the vertical family $\eth^{V}$ is invertible, we have in fact
\begin{equation}
   \cK\rest{\diag_H} \in
	\rho_{\obf}^{\infty}\rho_{\ocf}^{-h}\rho_{\otf}^{-n+1} \CI( \diag_H, \Omega(\diag_H) )
\label{log.1}\end{equation}
so that $e^{-t\eth^{2}}$ is trace class for positive time.
However, when the vertical family is not invertible, only \eqref{FromVail} holds and if $\cK_t$ is the restriction of $\cK$ to a fixed $t>0$, then $\cK_t\rest{\diag_H}$ is not integrable and hence $e^{-t\eth^2}$ is not of trace class.

One way to define the `trace' of the heat kernel in this case is to consider the function
\begin{equation*}
	z \mapsto 
	\Tr (x^z e^{-t\eth^2}) = \int_M x^z \cK_t \rest{\diag}.
\end{equation*}
For $\Re z$ large enough this is well-defined and for any $t>0$ it extends to be a meromorphic function of $z$ with at most simple poles.
We define the renormalized trace of the heat kernel to be the finite part of this function at $z=0$,
\begin{equation*}
	{}^R \Tr( e^{-t\eth^2} ) 
	= \sideset{^R}{_M}\int \cK_t \rest{\diag}
	= \FP_{z=0} \int_M x^z \cK_t \rest{\diag}.
\end{equation*}

As a function of $t$, $\cT = {}^R\Tr( e^{-t\eth^2} )$ inherits an asymptotic expansion as $t\to 0$ from the asymptotic expansions of $\cK$ at the boundary faces of $\diag_H$.
Each term in the expansion of $\cK$ at either $\bhs{\otf}$ or $\bhs{\ocf}$ gives rise to a term in the expansion of $\cT$ as $t \to 0$.
On the other hand, terms in the expansion of $\cK$ at the corners $\bhs{\otf} \cap \bhs{\ocf}$ and $\bhs{\obf}\cap\bhs{\ocf}$ potentially give rise to extra logarithmic terms in the expansion of $\cT$ as $t \to 0$.  More precisely, we have the
following. 
\begin{theorem}
Given  a Dirac-type operator $\eth$ associated to a $\fD$-metric with
vertical family $\eth^{V}$ satisfying the constant 
rank assumption, there exist constants $a_{k}, b_{k}$ for $k\in \bbN_{0}$ such that 
\begin{equation}\label{GralExpansion}
	{}^R\Tr( e^{-t\eth^2} ) \sim
	\lrspar{
	t^{-n/2} \sum_{k \geq 0} a_k t^{k/2}
	+ t^{-(h+1)/2} \sum_{k \geq 0} b_k t^{\frac{k}{2}} \log t } \; dt
\end{equation}
as $t\to 0$ .  
\label{log.2}\end{theorem}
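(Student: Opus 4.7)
The strategy is to realize the renormalized trace as a renormalized pushforward of Vaillant's heat kernel description and then apply the Melrose pushforward theorem for polyhomogeneous distributions. It is cleaner to work with $s = \sqrt{t}$, since $\cK\rest{\diag_H}$ is polyhomogeneous in $s$ rather than $t$. Consider the b-fibration $\pi : \diag_H \to \bbR^+_s$ given by projection onto the time factor. Under $\pi$, the face $\bhs{10}$ surjects onto $(0, \infty)$, while $\bhs{01}$ and $\bhs{11}$ both collapse to $\{s = 0\}$; so it is the latter two that control the asymptotic behavior of $\cT(t)$ as $s \to 0$.

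The naive pushforward of $\cK\rest{\diag_H}$ is divergent because of the factor $\rho_{10}^{-1}$ in \eqref{FromVail}, reflecting non-integrability at spatial infinity. Using the boundary defining function $x$ of $\pa M$ pulled back to $\diag_H$, I would write the renormalized trace as
\[
\cT(s^2) \;=\; \FP_{z=0}\, \pi_*\!\bigl(x^z\, \cK\rest{\diag_H}\bigr)(s).
\]
For $\Re z \gg 0$ the integrand is polyhomogeneous on $\diag_H$ with strictly positive leading exponent at $\bhs{10}$, so the standard pushforward theorem applies and yields a polyhomogeneous function of $s$. As $z$ varies, this pushed-forward function is meromorphic in $z$, with poles coming from the integration over the corners where $\bhs{10}$ meets another face; taking $\FP_{z=0}$ turns each such simple pole into a $\log s$ term in the expansion at $s = 0$.

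By the pushforward theorem, the index set of the renormalized pushforward at $s = 0$ is the extended union of the images of the index sets of the integrand at $\bhs{01}$ and $\bhs{11}$. The contribution from $\bhs{01}$ is independent of the renormalization, since $\bhs{01}$ lies in the interior of $\diag_H$ in the $x$-direction: the leading exponent $-n+1$ of \eqref{FromVail}, together with the codimension coming from collapsing the $M$-directions, produces the leading singularity $s^{-n} = t^{-n/2}$, while the Taylor expansion in $\rho_{01}$ produces integer powers of $s$ (equivalently, half-integer powers of $t$), yielding the first series $t^{-n/2}\sum_k a_k\, t^{k/2}$ -- precisely the local heat kernel expansion away from the cusps.

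The contribution from $\bhs{11}$, by contrast, interacts non-trivially with the renormalization through its corner with $\bhs{10}$. The leading exponent $-h$ of \eqref{FromVail} combined with the codimension gives the factor $s^{-(h+1)} = t^{-(h+1)/2}$. Near the corner $\bhs{10} \cap \bhs{11}$ the integration in the $\rho_{10}$-direction of $\rho_{10}^{z-1}$ against the $\bhs{11}$-expansion produces poles of the form $1/z$ in $\pi_*(x^z \cK\rest{\diag_H})$ at each half-integer power inherited from $\bhs{11}$; the operation $\FP_{z=0}$ converts each such pole into a $\log s$ (hence, up to a constant, a $\log t$) term, giving exactly the second series $t^{-(h+1)/2}\sum_k b_k\, t^{k/2} \log t$. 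The main technical hurdle is this last bookkeeping step: verifying that a simple pole indeed arises at each half-integer exponent in the $\bhs{11}$ index set, and carefully tracking the density/half-density factors under $\pi_*$ and under the change of variable $t = s^2$. This is a direct but delicate application of the pushforward theorem to Vaillant's explicit description, in the spirit of the analogous computations in \cite{Albin-Rochon1} and \cite{Albin-Rochon2}.
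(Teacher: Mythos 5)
Your proposal is correct in substance and, despite the pushforward-theorem packaging, ends up doing essentially what the paper does: the paper explicitly remarks that Melrose's pushforward theorem would apply directly either to the unrenormalized integral or to a renormalization by $\rho_{10}$, but that renormalizing by $x$ (which near the corner $\bhs{10}\cap\bhs{11}$ is a \emph{product} $\rho_{10}\rho_{11}$ of boundary defining functions, so that $x^z$ shifts the index sets at \emph{both} faces) puts one just outside the literal scope of the theorem, and so it proceeds by the direct corner-by-corner computation that you defer as the ``main technical hurdle.'' Your identification of the two mechanisms is right --- the $1/(k+z+1)$ pole at $k=-1$ from $\int_0 x^{k+z}\,dx$ pairing with the $z$-shifted power $C^{\ell+z}$ of $\sqrt t$ to produce $C^{\ell}\log C$ upon taking $\FP_{z=0}$ --- but your narrative misattributes the entire $\log t$ series to the renormalization. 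In the paper the corner $\bhs{01}\cap\bhs{11}$ also produces $\log t$ terms, for the purely geometric reason (the extended union of overlapping index sets, i.e.\ the $\int_{C/\eps}^{\eps} x^{-1}dx$ integrals when $\ell-k=-1$) that you invoke in passing and then drop; those terms are independent of the renormalization and likewise start at $t^{-(h+1)/2}$, so your final expansion is still of the claimed form, but a complete proof must account for both corners. With that correction and the explicit bookkeeping carried out, your argument coincides with the paper's.
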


\begin{proof}
Away from the corners, it is easy to see from
\eqref{FromVail} that the asymptotic expansions $\left.\cK\right|_{\diag_{H}}$ at the faces $\bhs{\ocf}$ and $\bhs{\otf}$ lead to asymptotic terms of the form
\[
     c_{k}t^{\frac{-n+k}{2}}, \quad c_{k}\in\bbC,\;, k\in\bbN_{0},
\]
in the small time asymptotic expansion of ${}^R\Tr( e^{-t\eth^2} )$. 
To understand how the logarithmic terms occur in the short time asymptotic, we 
need to study the contributions coming from the two corners $\bhs{\otf} \cap \bhs{\ocf}$ and $\bhs{\obf} \cap \bhs{\ocf}$.

If we were not renormalizing the integral, the expansion would follow directly from the push-forward theorem of \cite{Melrose1992}.
Furthermore, as pointed out in \cite[pg. 128]{Hassell-Mazzeo-Melrose}, if we were renormalizing using $\rho_{\obf}$ instead of $x$, the same theorem could be applied.
As it is, our situation is simple enough that we can proceed by direct computation (\cf example 3.2 in \cite{Grieser}).
% In principle, this could be determined from the pushforward theorem \cite{Melrose1992}, which as pointed out in \cite[pg. 128]{Hassell-Mazzeo-Melrose} admits generalizations to to `regularized' pushforward maps.  However, since our 
% regularization scheme is different than the one of \cite{Hassell-Mazzeo-Melrose} and since our situation is relatively simple, we will proceed instead by direct computation (\cf example 3.2 in \cite{Grieser}).

By judicious choice of coordinate patches we can consider the two corners $\bhs{\otf} \cap \bhs{\ocf}$ and $\bhs{\obf} \cap \bhs{\ocf}$ separately.
First, near $\bhs{\otf}\cap \bhs{\ocf}$, we can choose the boundary defining functions to be given by 
\begin{equation*}
  \rho_{\ocf}=x, \quad \rho_{\otf}=\frac{\sqrt{t}}{x}.
\label{log.3}\end{equation*}
The reason this corner will contribute log-terms to the expansion is heuristically explained by looking at the level sets of $x$ near $\bhs{\otf} \cap \bhs{\ocf}$ in Figure \ref{DiagHLevel}, and is unrelated to renormalization.
\begin{figure}[htpb]
      \centering
      \includegraphics[height=1in]{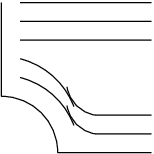}
      \caption{Lines we are integrating over}
      \label{DiagHLevel}
\end{figure}
To compute the contribution coming from the corner, we restrict our attention
to the small region $\cU_{\epsilon}$ defined by
\begin{equation*}
  \cU_{\epsilon}:= \{ p\in \diag_{H}\; | \; 0\le \rho_{\ocf}(p)\le \epsilon, 
  \; 0\le \rho_{\otf}(p)\le \epsilon \}.
\label{log.4}\end{equation*}
Thus, in the region $\cU_\epsilon$, we have $\frac{\sqrt{t}}{\epsilon}\le x\le \epsilon$.
Our choices of boundary defining functions give us a natural identification
\begin{equation*}
        \cU_{\epsilon}= \pa M\times [0,\epsilon]_{\rho_{\ocf}}\times [0,\epsilon]_{\rho_{\otf}}
\label{log.5}\end{equation*}
and a natural projection $\pi_{\pa M}:\cU_{\epsilon}\to \pa M$ onto the left factor.
From \eqref{FromVail}, we have that in the region $\cU_{\epsilon}$,
\begin{multline*}
\left.\cK\right|_{\cU_{\epsilon}}\in \rho^{-n+1}_{\otf}\rho_{\ocf}^{-h}\CI(\cU_{\epsilon};\pi_{\pa M}^{*}\Omega(\pa M))d\rho_{\ocf}d\rho_{\otf}\\
 = \rho^{-n}_{\otf}x^{-h-2}\CI(\cU_{\epsilon};\pi_{\pa M}^{*}\Omega(\pa M))dx dt.  
\label{log.6}\end{multline*}
We therefore have the following asymptotic expansion at the corner 
$\bhs{\otf}\cap\bhs{\ocf}$,
\begin{equation}
\begin{aligned}
\left.\cK\right|_{\cU_{\epsilon}}&\sim \sum_{k=-n}^{\infty}\sum_{\ell=-h-2}^{\infty}
    a_{k\ell}\rho^{k}_{\otf}x^{\ell} \; dx \; dt, \quad a_{k\ell}\in \CI(\pa M; \pi^{*}_{\pa M}\Omega(\pa M)), \\
&\sim  \sum_{k=-n}^{\infty}\sum_{\ell=-h-2}^{\infty}
    a_{k\ell}\sqrt{t}^{k}x^{\ell-k} \; dx \; dt.
\end{aligned}
\label{log.7}\end{equation}
If we write $\left. \cK\right|_{\cU_{\epsilon}}= a dxdt$ for some appropriate section $a$ of
$\pi_{\pa M}^{*}\Omega(\pa M)$, then
the integral of $\left. \cK\right|_{\cU_{\epsilon}}$ over the slice $\sqrt{t}=C$ can be written
\begin{equation*}
  \int_{\cU_{\epsilon}\cap \{t=C^{2}\}} \cK= 
    \left( \int_{\frac{C}{\epsilon}}^{\epsilon} \lrpar{ \int_{\pa M} a } dx\right) dt 
\label{log.8}\end{equation*} 
In particular, for the terms in the asymptotic expansion \eqref{log.7}, we get for
$\ell-k\ne -1$
\begin{multline*}
\left(\int_{\frac{C}{\epsilon}}^{\epsilon} \left(\int_{\pa M} a_{k\ell}\right)C^{k} x^{\ell-k}dx\right)dt=  \\
\left(\int_{\pa M}a_{k\ell}\right) \frac{1}{\ell-k+1}\left( C^{k}\epsilon^{\ell-k+1}- \frac{C^{\ell+1}}{\epsilon^{\ell-k+1}} \right) dt,
\label{log.9}\end{multline*}
while for $\ell-k=-1$, 
\begin{equation*}
  \left(\int_{\frac{C}{\epsilon}}^{\epsilon} \left(\int_{\pa M} a_{k\ell}\right)C^{k} x^{\ell-k}dx\right)dt=
  - \left(\int_{\pa M} a_{k\ell}\right) C^{k} \log\left(\frac{C}{\epsilon^{2}}\right).
\label{log.10}\end{equation*}
Thus, 
\begin{equation*}
    \int_{\cU_{\epsilon}\cap \{t=C^{2}\}} \left. \cK\right|_{\cU_{\epsilon}} 
	\sim C^{-n} \sum_{k=0}^{\infty} \alpha_{k} C^{k} 
	+ C^{-(h+1)} \sum_{k=0}^{\infty} \beta_{k} C^{k}\log C
%\label{log.11}
\end{equation*}
when $C\searrow 0$, which gives an asymptotic expansion of the form given in \eqref{GralExpansion}.  

Next, near $\bhs{\obf}\cap \bhs{\ocf}$, we can choose the boundary defining functions to be given by 
\begin{equation*}
  \rho_{\ocf}=\sqrt t , \quad \rho_{\obf}=\frac{x}{\sqrt{t}}
\end{equation*}
Heuristically, from Figure \ref{DiagHLevel}, one would not expect the corner $\bhs{\obf} \cap \bhs{\ocf}$ to contribute log-terms to the expansion, however we shall see that the renormalization of the integrals causes these log-terms to appear.
%For convenience, rescaling $x$ if necessary, we can assume $\{m\in M \; | \; x(m)\le 1\}$ is a collar neighborhood of $\pa M$ in $M$.
We can consider the neighborhood 
\begin{equation*}
	\cV_{\epsilon} = \{ p\in \diag_{H} \; | \; 0\le \rho_{\ocf}(p),\rho_{\obf}(p) \le \epsilon\} 
\end{equation*}
of $\bhs{\obf}\cap\bhs{\ocf}$ in $\diag_{H}$.  Again, we 
have an identification $\cV_{\epsilon}=\pa M\times [0,1]_{\rho_{\ocf}}\times [0,1]_{\rho_{\obf}}$
and a natural projection $\pi_{\pa M}: \cV_{\epsilon} \to \pa M$ onto the left factor.  According to \eqref{FromVail}, we have
\begin{equation*}
	\left. x^{z}\cK\right|_{\cV_{\epsilon}} 
	\in \rho_{\obf}^{-1+z} \rho_{\ocf}^{-h-1+z} \CI(\cV_{\epsilon}; \pi_{\pa M}^{*}\Omega(\pa M))\; d\rho_{\obf}\; dt
%\label{log.12}
\end{equation*}
with corresponding asymptotic behavior
\begin{equation*}
\begin{aligned}
	\left.x^{z}\cK\right|_{\cV_{\epsilon}} 
	&\sim 
	\sum_{k=-1}^{\infty}\sum_{\ell=-h-1}^{\infty}\tilde{a}_{k\ell}
	\rho^{k+z}_{\obf}\rho_{\ocf}^{\ell+z} \; d\rho_{\obf} \; dt, 
	\quad \wt a_{k\ell} \in \CI( \pa M; \Omega(\pa M) ), \\
	&\sim 
	\sum_{k=-1}^{\infty}\sum_{\ell=-h-1}^{\infty}\tilde{a}_{k\ell}
	x^{k+z} (\sqrt t)^{\ell-k-1} \; dx \; dt.
\end{aligned}
%\label{log.13}
\end{equation*}
For each $k$ and $\ell$ and for $\Re z \gg |k|,|\ell|$, one computes that $\wt a_{k\ell}$ contributes to the asymptotic expansion of 
$\int_{\cV_{\epsilon} \cap \{t=C^{2}\} } \left.x^{z}\cK\right|_{\cV_{\epsilon}}$ via
\begin{equation*}
	\lrpar{ \int_0^{C\epsilon} \lrpar{ \int_{\pa M} \wt a_{k\ell} } C^{\ell-k-1} x^{k+z} \; dx } \; dt
%	= \lrpar{ \int_{\pa M} \wt a_{k\ell} } C^{\ell-k-1} \lrspar{ \frac{x^{k+z+1}}{k+z+1} }_0^{C\epsilon} \; dt
	= \lrpar{ \int_{\pa M} \wt a_{k\ell} } \epsilon^{k+z+1} \frac{C^{\ell+z}}{k+z+1}  \; dt.
\end{equation*}
Finally taking the finite part at $z=0$, we get a contribution of
\begin{equation*}
	\begin{cases}
		\lrpar{ \int_{\pa M} \wt a_{k\ell} } \epsilon^{k+1} \frac{C^{\ell}}{k+1}  \; dt & \text{ if } k \neq -1 \\
		\lrpar{ \int_{\pa M} \wt a_{k\ell} } C^{\ell}\lrspar{\log \epsilon + \log C } \; dt 
			& \text{ if } k = -1
	\end{cases}
\end{equation*}
since $(C\epsilon)^{z}= e^{z\log(C\epsilon)}= 1+ z\log(C\epsilon) +\mathcal{O}(z^{2})$.
Thus we see that 
\begin{equation*}
	\int_{\cV_{\epsilon} \cap \{t=C^{2}\}} \left.x^{z}\cK\right|_{\cV_{\epsilon}} 
	\sim {C^{-(h+1)}} \left( \sum_{k=0}^{\infty} \tilde{\alpha}_{k} C^{k} + \sum_{k=0}^{\infty}\tilde{\beta}_k C^k \log C \right), 
%\label{log.16}
\end{equation*}
which gives again an asymptotic expansion of the form \eqref{GralExpansion} and completes the proof.
\end{proof}

\begin{remark}
As on a closed manifold, one can show that the expansion at $\bhs{\otf}$ of $t^{n/2}$ times the heat kernel involves only powers of $t$ (instead of $\sqrt t$). 
\label{EvenHeat}
\end{remark}

\begin{remark}
As mentioned in the proof of the theorem, for integrable densities the pushforward theorem gives the form of the expansion \eqref{GralExpansion} with log-terms arising from the expansion at $\bhs{\otf} \cap \bhs{\ocf}$ but not from the corner $\bhs{\obf} \cap \bhs{\ocf}$.  As an example, for a  hyperbolic surface with cusps ($n=2$, $h=0$), we have an expansion
\begin{equation}
	{}^R\Tr( e^{-t\Delta_{\Sigma}} ) \sim
	\lrspar{
	t^{-1} \sum_{k \geq 0} a_k t^{k/2}
	+ t^{-1/2} \sum_{k \geq 0} b_k t^{\frac{k}{2}} \log t } \; dt
\label{log.18}\end{equation}
for the heat kernel of the Laplacian.  From remark~\ref{EvenHeat}, we know that the corner
$\bhs{\otf}\cap\bhs{\ocf}$ leads to no logarithmic term and an explicit computation at the corner 
$\bhs{\obf}\cap\bhs{\ocf}$ shows that $b_{0}\ne 0$ and is in fact the same as
the corresponding term in the short time expansion of the ``relative trace"
considered by M\"uller \cite[equation (2.3)]{Muller}.
\end{remark}

\begin{corollary}
For the operator $D_{E_{\rho}}$ considered in \eqref{ds.2}, we have the 
asymptotic expansion
\[
   \Tr(e^{-tD^{2}_{E_{\rho}}})\sim \frac{1}{t} \sum_{k=0}^{\infty} a_{k}t^{\frac{k}{2}}
\quad \mbox{as} \; t\searrow 0,
\]
for some constants $a_{k}$, $k\in \bbN_{0}$.
\label{log.17}\end{corollary}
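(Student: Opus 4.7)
The plan is to deduce the corollary directly from Theorem~\ref{log.2} by verifying that all the logarithmic terms in its general expansion vanish in the present setting. Since $\tSigma$ has dimension $n=2$ and each cusp boundary is a circle fibering trivially over a point (so $h=0$), Theorem~\ref{log.2} gives the a priori expansion
\[
\Tr(e^{-tD_{E_\rho}^2}) \sim t^{-1}\sum_{k\ge 0}a_k t^{k/2} + t^{-1/2}\sum_{k\ge 0}b_k t^{k/2}\log t \quad\text{as } t\to 0^+,
\]
and the task reduces to showing that $b_k=0$ for every $k$. Inspection of the proof of Theorem~\ref{log.2} shows these log contributions can arise only at the two corners $\bhs{10}\cap\bhs{11}$ and $\bhs{01}\cap\bhs{11}$ of $\diag_H$, which I would handle in turn.

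First I would address the corner $\bhs{10}\cap\bhs{11}$. Because all parabolic weights are assumed non-zero, Lemma~\ref{db.28} yields that the vertical family $D^V_{E_\rho}$ is invertible, so the sharper bound \eqref{log.1} applies: $\cK|_{\diag_H} \in \rho_{10}^\infty\rho_{11}^{-h}\rho_{01}^{-n+1}\CI(\diag_H;\Omega(\diag_H))$. The infinite-order vanishing at $\bhs{10}$ precludes any contribution — logarithmic or otherwise — from this corner, consistent with the observation preceding the corollary that $e^{-tD_{E_\rho}^2}$ is of trace class and the $\rho_{10}$-renormalization is vacuous.

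Next I would address $\bhs{01}\cap\bhs{11}$ via Remark~\ref{EvenHeat}: because $D_{E_\rho}^2$ is a generalized Laplacian, the expansion of $t^{n/2}\cK_t$ at $\bhs{01}$ involves only integer powers of $t$, exactly as on a closed manifold. Rerunning the corner analysis in the proof of Theorem~\ref{log.2} with this additional constraint — combined with the smoothness of the interior heat coefficients $a_k(p)$ up to the boundary $x=0$ — forces the Taylor coefficients $a_{k,k-1}$ responsible for the $t^{(k-1)/2}\log t$ contributions at this corner all to vanish. Hence $b_k=0$ for every $k$ and the claimed expansion follows.

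The main obstacle I expect is this last step: carefully matching the interior expansion $\cK_t(p,p)\sim t^{-1}\sum_k a_k(p) t^k$ with the resolved expansion of $\cK$ at the corner in the coordinates $(x,\rho_{01})$, so as to read off the vanishing order of the relevant Taylor coefficients. Concretely, the evenness of the expansion in $\rho_{01}$ kills $a_{k,k-1}$ for $k$ odd, while smoothness of $a_k(p)$ at $x=0$ kills $a_{k,k-1}$ for $k$ even. Once these cancellations are in place, the corollary is an immediate application of Theorem~\ref{log.2}.
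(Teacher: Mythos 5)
Your argument is correct and follows the paper's proof essentially verbatim: the corner $\bhs{10}\cap\bhs{11}$ is disposed of by the invertibility of the vertical family via \eqref{log.1}, and the corner $\bhs{01}\cap\bhs{11}$ by showing the interior heat coefficients contribute no $x^{-1}$ term. The one point you assert rather than justify --- smoothness of those coefficients up to $x=0$ --- is exactly where the paper supplies the reason: they are universal expressions in the curvature of $E_{\rho}$ (which vanishes) and of $g_{\Sigma}$ (which is constant), hence are actually constant near the cusp, so neither an $x^{-1}$ singularity nor a logarithm can arise there.
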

\begin{proof}
From theorem~\ref{log.2}, we have an asymptotic expansion of the form \eqref{log.18}.
To see that the coefficient $b_{k}$ vanishes for all $k\in \bbN_{0}$, notice first that since the vertical operator of $D_{E_{\rho}}$ is invertible by assumption, we do not pick up any logarithmic terms from the corner $\bhs{\obf}\cap \bhs{\ocf}$ in light of \eqref{log.1}.  At
the other corner $\bhs{\otf}\cap \bhs{\ocf}$ the asymptotic expansion of 
$\left. \cK\right|_{\diag_{H}}$ is dictated by the standard local expansion
in the interior,
\begin{equation}
   \left. \cK\right|_{\diag_{H}}\sim \frac{1}{t} \sum_{k=0}^{\infty} c_{k} t^{k} \; dr \; d\theta dt
\label{log.19}\end{equation} 
in the coordinates of \eqref{db.6}.  Although a priori the coefficient
$c_{k}$ could depend on $r$ and $\theta$, it is in fact constant since
it is a universal expression in terms of the curvature of $E_{\rho}$, which is
zero, and the curvature of $g_{\Sigma}$, which is constant.  Since we would need a term of the form $t^{k}r^{-1}dr d\theta dt$ to pick up a logarithmic term,
we see that the asymptotic expansion of $\left.\cK\right|_{\diag_{H}}$ at 
the corner $\bhs{\otf}\cap \bhs{\ocf}$ leads to no logarithmic term.  Consequently, $b_{k}=0$ for all
$k\in \bbN_{0}$ and the result follows.
\end{proof}

\bibliography{ifpsb}
\bibliographystyle{amsplain}

\end{document}